\numberwithin{equation}{section}
\newtheorem{theorem}{Theorem}[section]
\newtheorem{proposition}[theorem]{Proposition}
\newtheorem{corollary}[theorem]{Corollary}
\newtheorem{lemma}[theorem]{Lemma}
\newtheorem{conjecture}[theorem]{Conjecture}
\newtheorem{observation}[theorem]{Observation}
\newtheorem{problem}[theorem]{Problem}
\newtheorem{example}[theorem]{Example}
\theoremstyle{definition}
\newtheorem{defn}[theorem]{Definition}
\newcommand{\rvline}{\hspace*{-\arraycolsep}\vline\hspace*{-\arraycolsep}}
\newcommand{\PM}{{\mathrm {PM}}}
\newcommand{\Rise}{{\mathrm {Rise}}}
\newcommand{\Stir}{{\mathrm {Stir}}}
\newcommand{\grFrob}{{\mathrm {grFrob}}}
\newcommand{\rev}{{\mathrm {rev}}}
\newcommand{\code}{{\tt {code}}}
\newcommand{\Frob}{{\mathrm {Frob}}}
\newcommand{\initial}{{\mathrm {in}}}
\newcommand{\rk}{{\mathrm {rk}}}
\newcommand{\neglex}{{\mathtt {neglex}}}
\newcommand{\symm}{{\mathfrak{S}}}
\newcommand{\CC}{{\mathbb {C}}}
\newcommand{\QQ}{{\mathbb {Q}}}
\newcommand{\ZZ}{{\mathbb {Z}}}
\newcommand{\OP}{{\mathcal{OP}}}
\newcommand{\FF}{{\mathbb{F}}}
\newcommand{\JJJ}{{\mathcal{J}}}
\newcommand{\UUU}{{\mathcal{U}}}
\newcommand{\CCC}{{\mathcal{C}}}
\newcommand{\VVV}{{\mathcal {V}}}
\newcommand{\MMM}{{\mathcal{M}}}
\newcommand{\BBB}{{\mathcal{B}}}
\newcommand{\III}{{\mathcal{I}}}
\newcommand{\Val}{{\mathrm{Val}}}
\newcommand{\zz}{{\mathbf {z}}}
\newcommand{\xx}{{\mathbf {x}}}
\newcommand{\II}{{\mathbf {I}}}
\newcommand{\TT}{{\mathbf {T}}}
\begin{document}

\title[Spanning subspace configurations]
{Spanning subspace configurations}

%\author{Brendan Pawlowski}
%\address
%{Department of Mathematics \newline \indent
%University of Southern California \newline \indent
%Los Angeles, CA, 90089-2532}
%\email{bpawlows@usc.edu}

\author{Brendon Rhoades}
\address
{Department of Mathematics \newline \indent
University of California, San Diego \newline \indent
La Jolla, CA, 92093-0112, USA}
\email{bprhoades@math.ucsd.edu}

\begin{abstract}
A {\em spanning configuration} in the complex vector space $\CC^k$ is a 
sequence $(W_1, \dots, W_r)$ of linear subspaces of $\CC^k$ such that $W_1 + \cdots + W_r = \CC^k$.
We present the integral cohomology of the moduli space of spanning configurations in 
$\CC^k$ corresponding
to a given sequence of subspace dimensions.
This simultaneously
generalizes the classical presentation of the cohomology of partial flag varieties and 
the more recent presentation of a variety of spanning line configurations defined by the author and Pawlowski.
This latter variety of spanning line configurations plays the role of the flag variety for the Haglund-Remmel-Wilson
Delta Conjecture of symmetric
function theory.
\end{abstract}

\keywords{subspace configuration, cohomology}
\maketitle

\begin{figure}[h]
\begin{tikzpicture}[scale = 0.7]

\draw [gray, fill=red!20] (10.5,0.5) -- (14.5,0.5) -- (13.5,-0.5) -- (9.5,-0.5) -- (10.5,0.5);

\draw [gray, fill=red!20] (12.5,2.5) -- (11.5,1.5) -- (11.5,-2.5) -- (12.5,-1.5) -- (12.5,2.5);

% \draw [red!40, fill=red!40] (12.5,0.5) -- (12.5,-0.5) -- (11.5,-0.5) -- (11.5, 0.5) -- (12.5,0.5);

\draw [gray] (12.5, -0.5) -- (11.5,-0.5);

\draw [red!20] (12.5, -0.5) -- (12.5,0);

\draw [red!20] (12.5, 0) -- (12.5,0.5);

\draw  (12,-2) -- (12,2);

\draw (10,0) -- (14,0);

\draw  (12.5,0.5) -- (11.5,-0.5);

\draw [line width = 0.5mm, blue] (13,2) -- (11,-2);

\node at (14.3,0.8) {$W_3$};

\node at (13.4,1.8) {$W_2$};

\node at (11.6,2.1) {$W_1$};

%\node at (11,-0.8) {$W_4$};

\end{tikzpicture} 
\caption{A point in $X_{(2,1,2),3}$.}
\label{fig:point}
\end{figure}
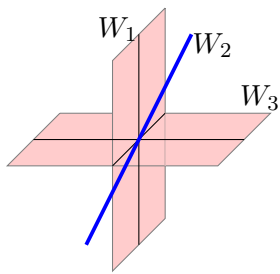

\section{Introduction}
\label{Introduction}

In this paper we compute the cohomology of the moduli space of ordered tuples of subspaces 
of $\CC^k$ with a given sequence of dimensions which have full span $\CC^k$.
These varieties generalize classical spaces of Schubert calculus 
(partial flag varieties) as well as 
a newer space $X_{n,k}$ of spanning line configurations which has connections to the
Delta Conjecture of symmetric function theory \cite{PR}.
The cohomology rings of these subspace configurations have combinatorics
governed by ordered tuples of subsets $\III = (I_1, \dots, I_r)$ of
$[k] := \{1, 2, \dots, k\}$ which satisfy $I_1 \cup \cdots \cup I_r = [k]$.

A finite sequence $(W_1, \dots, W_r)$ of linear subspaces of the $k$-dimensional complex vector 
space
$\CC^k$ is a {\em spanning configuration} 
if the $W_i$ span $\CC^k$, i.e. $W_1 + \cdots + W_r = \CC^k$.
The {\em dimension vector} of 
$(W_1, \dots, W_r)$ of subspaces of $\CC^k$ is the sequence of dimensions
$(\dim(W_1), \dots, \dim(W_r))$.
The following moduli space of spanning configurations is our object of study.

\begin{defn}
\label{main-space-definition}
Let $k > 0$ and let $\alpha = (\alpha_1, \dots, \alpha_r) \in [k]^r$. 
Let $X_{\alpha,k}$ be the moduli space
\begin{equation}
X_{\alpha,k} := \{ \text{all spanning configurations $(W_1, \dots, W_r)$ in $\CC^k$ with dimension vector 
$\alpha$} \}.
\end{equation}
\end{defn}

A subspace configuration in $X_{(2,1,2),3}$ is shown in Figure~\ref{fig:point}.

Let $Gr(d,k)$ be the Grassmannian of all $d$-dimensional subspaces of $\CC^k$. We have an inclusion
\begin{equation}
\iota: X_{\alpha,k} \hookrightarrow Gr(\alpha,k)
\end{equation}
of $X_{\alpha,k}$ into the product of Grassmannians
\begin{equation}
Gr(\alpha,k) := Gr(\alpha_1,k) \times \cdots \times Gr(\alpha_r,k). 
\end{equation}
This realizes $X_{\alpha,k}$ as a Zariski open subset of $Gr(\alpha,k)$.
We may therefore view $X_{\alpha,k}$ as either a complex algebraic variety or a smooth complex manifold.

Various old and new moduli spaces arising in Schubert calculus are special cases of the $X_{\alpha,k}$.
\begin{itemize}
\item  If $\alpha_i = k$ for any $i$, then any sequence of subspaces in $\CC^k$ is a spanning configuration
and $X_{\alpha,k}$ equals the Grassmann product $Gr(\alpha,k)$.
Any product of Grassmannians can be obtained in this way.
\item  If $\alpha = (1^k)$ is a length $k$ sequence of $1$'s, then $X_{(1^k),k}$ is homotopy equivalent 
to the flag variety 
\begin{equation}
\mathcal{F \ell}(k) = \{ 0 = V_0 \subset V_1 \subset \cdots \subset V_k = \CC^k \,:\, \dim(V_i) = i \}
\end{equation}
of complete flags in $\CC^k$.  To see why this is the case, let 
$GL_k$ be the group of $k \times k$ invertible complex matrices and let $T \subseteq B$ be the subgroups
of diagonal and upper triangular matrices.  We have a canonical projection
\begin{equation}
\pi: X_{(1^k),k} = GL_k/T \twoheadrightarrow GL_k/B = \mathcal{F \ell}(k).
\end{equation}
It can be shown that $\pi$ is a fiber bundle with contractible fiber (homeomorphic to the maximal unipotent subgroup
of $GL_k$ of upper triangular matrices with $1$'s on the diagonal), so that $\pi$ is a homotopy equivalence.
\item  More generally, if $\alpha_1 + \cdots + \alpha_r = k$, then $X_{\alpha,k}$ is homotopy equivalent to
the partial flag variety
\begin{equation}
\mathcal{F \ell}(\alpha) = \{ 0 = V_0 \subset V_1 \subset \cdots \subset V_r = \CC^k \,:\,
\dim(V_i) = \alpha_1 + \cdots + \alpha_i \}.
\end{equation}
Any partial flag variety can be obtained in this way.
\item  Finally, if $\alpha = (1^n)$ for some $k \leq n$, the space $X_{(1^n),k}$ 
consists of $n$-tuples of lines whose span equals $\CC^k$. This space 
of {\em spanning line configurations}
was introduced by 
Pawlowski and the author  \cite{PR} under the name $X_{n,k}$. It was shown that the cohomology 
$H^{\bullet}(X_{(1^n),k}; \ZZ)$ has rank equal to $k! \cdot \Stir(n,k)$, where $\Stir(n,k)$ is the Stirling number
of the second kind counting set partitions of an $n$-element set into $k$ nonempty blocks.
\end{itemize}

The space $X_{(1^n),k}$ of the last bullet point  has connections to symmetric function theory.
We briefly describe this connection here
and refer the reader to \cite{Bergeron} for details on symmetric function theory.

%To explain these connections we will need the language of symmetric function theory.
%We refer the reader to \cite{Bergeron, HRW} for more details.
%Let $\Lambda_n$ be the vector space of symmetric functions which are homogeneous of degree $n$
%over the ground field $\QQ(q,t)$
%and let $\{ \widetilde{H}_{\mu} \,:\, \mu \vdash n \}$ be the modified Macdonald basis of 
%$\Lambda_n$.  For any symmetric function $F$, we have the Bergeron-Garsia
%{\em (primed) delta operator} $\Delta'_F : \Lambda_n \rightarrow \Lambda_n$.
%This is the Macdonald eigenoperator 
%\begin{equation}
%\Delta'_F: \widetilde{H}_{\mu} \mapsto F[B_{\mu}(q,t) - 1] \cdot \widetilde{H}_{\mu}.
%\end{equation}
%The eigenvalue $F[B_{\mu}(q,t) - 1]$ is plethystic shorthand for the expression
%\begin{equation}
%F[B_{\mu}(q,t) - 1] := F( \dots, q^i t^j , \dots )
%\end{equation}
%where $(i,j)$ range over all coordinates $\neq (0,0)$ of the cells in the Ferrers diagram of the 
%partition $\mu$.  For example, if $\mu = (3,2) \vdash 5$ we fill the cells of $\mu$ like
%\begin{equation*}
%\begin{young}
% \cdot & q & q^2 \cr
% t  & qt
%\end{young} \quad \text{so that} \quad
%F[B_{\mu}(q,t) - 1] = F(q, q^2, t, qt).
%\end{equation*} 
Given $k \leq n$, the {\em Delta Conjecture} of 
Haglund, Remmel, and Wilson \cite{HRW} predicts the following identity
of formal power series involving infinitely many variables $\xx = (x_1, x_2, \dots )$ and two additional
parameters $q,t$:
\begin{equation}
\Delta'_{e_{k-1}} e_n = \Rise_{n,k}(\xx;q,t) = \Val_{n,k}(\xx;q,t).
\end{equation}
Here $e_n$ is the elementary symmetric function, $\Delta'_{e_{k-1}}$ is the  primed
delta operator attached to $e_{k-1}$, and $\Rise$ and $\Val$ are certain combinatorially defined formal
power series related to lattice paths.

The full Delta Conjecture is open, but it is known when one of the variables $q,t$ is specialized
to zero.
Combining results in \cite{GHRY, HRW, HRS, Rhoades, Wilson} we have
\begin{equation}
\label{six-equality}
\Delta'_{e_{k-1}} e_n \mid_{t = 0} = 
\Rise_{n,k}(\xx;q,0) = \Rise_{n,k}(\xx;0,q) = \Val_{n,k}(\xx;q,0) = \Val_{n,k}(\xx;0,q).
\end{equation}
Let $C_{n,k}(\xx;q)$ be the common symmetric function of Equation~\eqref{six-equality}.

Haglund, the author, and Shimozono \cite{HRS} gave an algebraic model 
and Pawlowski and the author \cite{PR} gave a geometric model
for the symmetric function $C_{n,k}(\xx;q)$.
Let $\ZZ[\xx_n] := \ZZ[x_1, \dots, x_n]$ and $\QQ[\xx_n] := \QQ[x_1, \dots, x_n]$ be the polynomial
rings in $n$ variables over the ring of integers and field of rational numbers.
Recall the {\em elementary symmetric polynomial} $e_d(\xx_n)$
and {\em complete homogeneous symmetric polynomial} $h_d(\xx_n)$
of degree $d$ in the variable set $\xx_n$:
\begin{equation}
e_d(\xx_n) := \sum_{1 \leq i_1 < \cdots < i_d \leq n} x_{i_1} \cdots x_{i_d}, \quad
h_d(\xx_n) := \sum_{1 \leq i_1 \leq \cdots \leq i_d \leq n} x_{i_1} \cdots x_{i_d}.
\end{equation}

For $k \leq n$, we follow \cite{HRS, PR} and define the ideals
\begin{align}
I_{n,k} &:= \langle x_1^k, x_2^k, \dots, x_n^k, e_n(\xx_n), e_{n-1}(\xx_n), \dots, e_{n-k+1}(\xx_n) \rangle
\subseteq \ZZ[\xx_n], \\
J_{n,k} &:= \langle x_1^k, x_2^k, \dots, x_n^k, e_n(\xx_n), e_{n-1}(\xx_n), \dots, e_{n-k+1}(\xx_n) \rangle
\subseteq \QQ[\xx_n].
\end{align}
Let $R_{n,k} := \ZZ[\xx_n]/I_{n,k}$ and $S_{n,k} := \QQ[\xx_n]/J_{n,k}$ be the corresponding quotient rings.

Let $\symm_n$ denote the symmetric group on $n$ letters. 
We recall the Frobenius image encoding of the isomorphism type of a (graded) $\symm_n$-module 
as a symmetric function.

The irreducible representations
of $\symm_n$ are in one-to-one correspondence with partitions $\lambda \vdash n$. 
If $\lambda \vdash n$ is a partition, let $S^{\lambda}$ denote the corresponding irreducible 
$\symm_n$-module. Any finite-dimensional $\symm_n$-module $V$ may be expressed as
$V \cong \bigoplus_{\lambda \vdash n} m_{\lambda} S^{\lambda}$ for some unique multiplicities
$m_{\lambda} \geq 0$.  The {\em Frobenius image} of $V$ is the symmetric function
$\Frob(V) := \sum_{\lambda \vdash n} m_{\lambda} s_{\lambda}$, where 
$s_{\lambda}$ is the Schur function.
Furthermore, if $V = \bigoplus_{d \geq 0} V_d$ is a graded $\symm_n$-module with each
piece $V_d$ finite-dimensional and $q$ is a grading variable, the {\em graded Frobenius image} is
$\grFrob(V;q) := \sum_{d \geq 0} \Frob(V_d) \cdot q^d$.

\begin{theorem}
\label{line-configuration-theorem}
Let $k \leq n$ be positive integers.
\begin{enumerate}
\item  (Pawlowski-R. \cite{PR})  The singular cohomology ring $H^{\bullet}(X_{(1^n),k}; \ZZ)$ may be presented
as 
\begin{equation*}
H^{\bullet}(X_{(1^n),k}; \ZZ) = R_{n,k},
\end{equation*}
where the variable $x_i$ represents the Chern class $c_1(\ell_i^*) \in H^2(X_{(1^n)}; \ZZ)$.
\item  (Haglund-R.-Shimozono \cite{HRS})  The graded Frobenius image of the graded $\symm_n$-modue
 $S_{n,k}$ is given by
 \begin{equation*}
 \grFrob(S_{n,k}; q) = (\rev_q \circ \omega) C_{n,k}(\xx;q).
 \end{equation*}
 Here $\rev_q$ is the operator which reverses the coefficient sequences of polynomials in $q$
 and $\omega$ is the symmetric function involution which trades $e_n$ and $h_n$. 
\end{enumerate}
\end{theorem}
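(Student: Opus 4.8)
The two parts demand different tools, so I treat them separately. For (1), equip $X_{(1^n),k}$ with the tautological line bundles $\ell_1, \dots, \ell_n$, the fibre of $\ell_i$ over $(L_1, \dots, L_n)$ being the line $L_i$, and set $x_i := c_1(\ell_i^*) \in H^2(X_{(1^n),k};\ZZ)$, so $c(\ell_i) = 1 - x_i$. Two short exact sequences of bundles force the relations of $R_{n,k}$. Since $\ell_i$ embeds in the trivial bundle $\underline{\CC^k}$ with a rank-$(k-1)$ quotient $Q_i$, Whitney's formula gives $c(Q_i) = c(\ell_i)^{-1} = \sum_{j \ge 0} x_i^j$, whose degree-$k$ term $x_i^k$ must vanish; and the spanning condition makes $\bigoplus_i \ell_i \twoheadrightarrow \underline{\CC^k}$ surjective with rank-$(n-k)$ kernel $K$, so $c(K) = \prod_i (1 - x_i) = \sum_j (-1)^j e_j(\xx_n)$, whose components in degrees $> n - k$ vanish, giving $e_j(\xx_n) = 0$ for $n - k < j \le n$. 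Thus there is a graded ring homomorphism $\varphi \colon R_{n,k} \to H^\bullet(X_{(1^n),k};\ZZ)$.

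To show $\varphi$ is an isomorphism I would count ranks. A Gröbner-basis analysis of $I_{n,k}$ exhibits $R_{n,k}$ as a free $\ZZ$-module of rank $k! \cdot \Stir(n,k)$ concentrated in even degrees. Geometrically, I would build an affine paving of $X_{(1^n),k}$ --- via a reduced-row-echelon normal form for a matrix of representative vectors --- whose cells are indexed by combinatorial data (surjections $[n] \twoheadrightarrow [k]$, equivalently ordered set partitions of $[n]$ into $k$ blocks) enumerated by $k! \cdot \Stir(n,k)$. This makes $H^\bullet(X_{(1^n),k};\ZZ)$ free of the same rank; moreover the same normal form stratifies $(\mathbb{P}^{k-1})^n$ with the non-spanning locus (matrices of rank $< k$) a closed union of cells, so the restriction $H^\bullet((\mathbb{P}^{k-1})^n;\ZZ) \to H^\bullet(X_{(1^n),k};\ZZ)$ is surjective. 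Since $H^\bullet((\mathbb{P}^{k-1})^n;\ZZ) = \ZZ[\xx_n]/(x_1^k, \dots, x_n^k)$ is a quotient of $R_{n,k}$, this forces $\varphi$ onto, and a surjection of free $\ZZ$-modules of equal finite rank is an isomorphism. The main obstacle is the paving: checking the strata are affine cells, computing their dimensions, and establishing the incidence structure needed for surjectivity of restriction all take work.

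For (2), the ideal $J_{n,k}$ is $\symm_n$-stable, so $S_{n,k}$ is a graded $\symm_n$-module; I would compute $\grFrob(S_{n,k};q)$ by orbit harmonics. Let $Z \subseteq \CC^n$ be the finite, $\symm_n$-stable set of points whose coordinates are $k$-th roots of unity and whose coordinate multiset contains every $k$-th root of unity; then $Z$ is in bijection with surjections $[n] \twoheadrightarrow [k]$, so $|Z| = k! \cdot \Stir(n,k)$. The associated graded $\mathrm{gr}\, \mathbf{I}(Z)$ of the vanishing ideal of $Z$ contains $J_{n,k}$: the relation $z_i^k = 1$ contributes $x_i^k$, and for $\mathbf{z} \in Z$ the polynomial $\prod_i (T - z_i)$ is divisible by $T^k - 1$, forcing $e_j(\mathbf{z})$ to agree on $Z$ with a polynomial of degree $< j$ for $n - k < j \le n$, which contributes $e_j(\xx_n)$; a dimension comparison with the rank computed above upgrades this to equality. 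Hence $S_{n,k} \cong \mathrm{gr}\, \CC[Z]$, so as an ungraded $\symm_n$-module $S_{n,k} \cong \CC[Z] \cong \CC[\OP_{n,k}]$, and the graded refinement follows from the Hall--Littlewood / Garsia--Procesi computation of the graded $\symm_n$-character of $\mathrm{gr}\, \CC[Z]$, giving a sum over ordered set partitions of $[n]$ into $k$ blocks of $q^{\maj}$ (or a cognate statistic) times a Gessel fundamental quasisymmetric function.

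The final and deepest step is to recognize this sum as $(\rev_q \circ \omega)\, C_{n,k}(\xx;q)$. By the specialization \eqref{six-equality}, $C_{n,k}(\xx;q)$ equals one of the explicit lattice-path series $\Rise_{n,k}(\xx;q,0)$ or $\Val_{n,k}(\xx;q,0)$, whose quasisymmetric expansion is again a sum over ordered set partitions; a statistic-tracking bijection between the two families of ordered set partitions --- together with the bookkeeping that $\rev_q$ replaces $\maj$ by its complement and $\omega$ transposes the indexing ribbons --- finishes the argument. This identification is where the full force of the results of \cite{GHRY, HRW, HRS, Rhoades, Wilson} enters, and it is the step I expect to demand the most care.
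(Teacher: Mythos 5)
First, a framing point: the paper does not prove Theorem~\ref{line-configuration-theorem} at all --- it is quoted from \cite{PR} and \cite{HRS} as motivation, and the only related argument in the paper is the proof of the generalization Theorem~\ref{main-theorem}. So I am comparing your sketch against that proof (specialized to $\alpha=(1^n)$, where $\symm_\alpha$ is trivial) and against what the cited references actually do.

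Your part (1) is essentially the right argument and is the same strategy the paper uses for Theorem~\ref{main-theorem}: Chern--Whitney relations give a ring map $R_{n,k}\to H^{\bullet}(X_{(1^n),k};\ZZ)$ (your two exact sequences produce exactly the relations $x_i^k=0$ and $e_j(\xx_n)=0$ for $j>n-k$, matching the paper's use of the dual surjection), a Gr\"obner/skip-monomial analysis shows $R_{n,k}$ is free of rank $k!\cdot\Stir(n,k)=|\OP_{n,k}|$, and an affine paving of $(\mathbb{P}^{k-1})^n$ whose non-spanning cells form an initial closed union gives both freeness of $H^{\bullet}(X_{(1^n),k};\ZZ)$ of the same rank and surjectivity of restriction (this is the content of Theorem~\ref{affine-paving} in the special case $\alpha=(1^n)$, where the mixed reduction degenerates to the echelon-form paving you describe). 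You correctly identify the paving and its incidence structure as the real work.

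Part (2) has a genuine gap in the middle step. Your orbit-harmonics setup is fine and matches Section~\ref{Quotient}: the locus $Z$ of surjective root-of-unity words gives $J_{n,k}\subseteq\TT(Z)$ and, after the dimension count, $S_{n,k}\cong\QQ[\OP_{n,k}]$ as an \emph{ungraded} $\symm_n$-module. But the graded refinement does not ``follow from the Hall--Littlewood / Garsia--Procesi computation.'' Garsia--Procesi theory computes $\grFrob$ of $\QQ[\xx_n]/\TT(Z_0)$ when $Z_0$ is a \emph{single} $\symm_n$-orbit; your $Z$ is a union of $\binom{n-1}{k-1}$ orbits (one per composition of $n$ into $k$ positive parts), and the associated graded of a union of orbits is not the direct sum of the associated gradeds of the pieces --- the filtration mixes them, so no such reduction is available. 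In \cite{HRS} the graded structure is obtained by entirely different means: an explicit graded $\ZZ$-basis of $R_{n,k}$ by generalized Garsia--Stanton monomials indexed by ordered set partitions (tracking a $\maj$-type statistic), together with the Demazure-character identity quoted here as Theorem~\ref{demazure-identity}, and only then is the resulting fundamental-quasisymmetric expansion matched against $\Val_{n,k}(\xx;0,q)$ using \cite{HRW, Rhoades, Wilson}. You are right that this last matching is where those references enter, but as written your route to the graded character stalls one step earlier. Notably, the analogous graded question for general $\alpha$ is left open in this paper (Problem~\ref{graded-frobenius}), which is a further sign that no soft argument of the kind you invoke is known.
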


Since the cohomology of $X_{(1^n),k}$ is concentrated in even dimensions, the Universal Coefficient
Theorem and the above result imply that
\begin{equation}
 \grFrob(H^{\bullet}(X_{(1^n),k}; \QQ); q) = (\rev_q \circ \omega) C_{n,k}(\xx;q).
\end{equation}
In this sense, the space $X_{(1^n),k}$ furnishes a geometric model for the Delta Conjecture.

Now consider the spaces $X_{\alpha,k}$ in the case $k = n$.  That is, suppose we have a sequence
$\alpha = (\alpha_1, \dots, \alpha_r) \in [n]^r$ such that $\alpha_1 + \cdots + \alpha_r = n$.
In this case, the space $X_{\alpha,n}$ is homotopy equivalent to the partial flag variety
$\mathcal{F \ell}(\alpha)$.
There is a classical presentation of the cohomology ring $H^{\bullet}(\mathcal{F \ell}(\alpha); \ZZ)$ due 
to Borel. To state it, we introduce some notation.

Let $\alpha = (\alpha_1, \dots, \alpha_r)$ be a length $r$ sequence 
of positive integers with $\alpha_1 + \cdots + \alpha_r = n$.  
We break up the list $\xx_n = (x_1, \dots, x_n)$ of variables into $r$ sublists 
$\xx^{(1)}_n, \dots, \xx^{(r)}_n$ where 
\begin{equation}
\xx^{(i)}_n := (x_{\alpha_1 + \cdots + \alpha_{i-1} + 1}, \dots , x_{\alpha_1 + \cdots + \alpha_{i-1} + \alpha_i})
\end{equation}
for $1 \leq i \leq r$. For example, if $\alpha = (2,1,2)$ so that $r = 3$ and $n = 5$ then 
\begin{equation*}
\xx^{(1)}_5 = (x_1, x_2), \quad \xx^{(2)}_5 = (x_3), \quad \xx^{(3)}_5 = (x_4, x_5).
\end{equation*}
Let $\symm_{\alpha} := \symm_{\alpha_1} \times \cdots \times \symm_{\alpha_r}$ be the parabolic subgroup
of $\symm_n$ attached to $\alpha$.
The factor $\symm_{\alpha_i}$ of $\symm_{\alpha}$ acts on the subscripts of variables in 
$\xx^{(i)}_n$.
Let $\ZZ[\xx_n]^{\symm_{\alpha}}$ be the ring of polynomials in $\xx_n$ which are invariant
under the action of $\symm_{\alpha}$.

\begin{theorem} 
\label{borel-theorem}
(Borel \cite{Borel})
Let $\alpha = (\alpha_1, \dots, \alpha_r) \in [k]^r$ be a sequence of positive integers 
with $\alpha_1 + \cdots + \alpha_r = n$.
The cohomology ring $H^{\bullet}(\mathcal{F \ell}(\alpha); \ZZ) = H^{\bullet}(X_{\alpha,k}; \ZZ)$ 
may be presented as 
\begin{equation}
H^{\bullet}(\mathcal{F \ell}(\alpha); \ZZ) = (\ZZ[\xx_n]/I)^{\symm_{\alpha}},
\end{equation}
where $I \subseteq \ZZ[\xx_n]$ is the ideal generated by the complete homogeneous
symmetric polynomials $h_d(\xx_n^{(i)})$ where $d > k - \alpha_i$ and $1 \leq i \leq r$.
\end{theorem}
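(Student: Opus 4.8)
The plan is to build $\mathcal{F\ell}(\alpha)$ as an iterated tower of Grassmann bundles ending in a point and to carry the presentation up the tower one stage at a time, using the standard description of the cohomology ring of a Grassmann bundle. A remark worth making at the outset is that a short generating‑function manipulation relating the $h$'s and $e$'s shows that the ideal $I$ is in fact the $\symm_n$‑coinvariant ideal $\langle e_1(\xx_n),\dots,e_n(\xx_n)\rangle$, so the statement is the classical form of Borel's presentation $H^\bullet(\mathcal{F\ell}(\alpha);\ZZ)\cong(\ZZ[\xx_n]/\langle e_1(\xx_n),\dots,e_n(\xx_n)\rangle)^{\symm_\alpha}$; since that form is not developed in the excerpt I would argue directly, but this identity is precisely the algebraic fact that makes the induction below close.

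First I would set up the comparison map. On $\mathcal{F\ell}(\alpha)$ let $0=S_0\subset S_1\subset\cdots\subset S_r=\underline{\CC^n}$ be the tautological flag of subbundles, $\rk S_i=\alpha_1+\cdots+\alpha_i$, with quotients $Q_i:=S_i/S_{i-1}$ of rank $\alpha_i$, and let $\varphi\colon\ZZ[\xx_n]^{\symm_\alpha}\to H^\bullet(\mathcal{F\ell}(\alpha);\ZZ)$ send $e_d(\xx_n^{(i)})$ to $c_d(Q_i)$. The generators of $I$ map to zero: each short exact sequence of smooth complex vector bundles splits (pick a Hermitian metric), so $S_i$ is a direct summand of $\underline{\CC^n}$, hence $\underline{\CC^n}/S_i$ has rank $n-(\alpha_1+\cdots+\alpha_i)$ and $c(\underline{\CC^n}/S_i)=c(S_i)^{-1}=\prod_{j\le i}c(Q_j)^{-1}$, which forces the requisite complete homogeneous symmetric polynomials in the Chern roots to vanish. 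Since the generators of $I$ are $\symm_\alpha$‑invariant and the rings involved are $\ZZ$‑free, $\varphi$ descends to $\overline\varphi\colon(\ZZ[\xx_n]/I)^{\symm_\alpha}\to H^\bullet(\mathcal{F\ell}(\alpha);\ZZ)$, and everything comes down to showing $\overline\varphi$ is an isomorphism.

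Then I would induct on $r$. The base $r=1$ is a point, where the claim is that the positive‑degree part of the $\symm_n$‑coinvariant algebra has no invariants, a classical fact. For $r\ge2$ put $\beta:=(\alpha_1+\alpha_2,\alpha_3,\dots,\alpha_r)$; forgetting $V_1$ exhibits $\mathcal{F\ell}(\alpha)$ as the Grassmann bundle $Gr(\alpha_1,S_1')$ of the rank‑$(\alpha_1+\alpha_2)$ tautological subbundle $S_1'$ over $\mathcal{F\ell}(\beta)$, with relative tautological sub‑ and quotient bundles $S_1$ and $Q_2$. The Grassmann‑bundle theorem presents $H^\bullet(\mathcal{F\ell}(\alpha);\ZZ)$ as the free $H^\bullet(\mathcal{F\ell}(\beta);\ZZ)$‑module of rank $\binom{\alpha_1+\alpha_2}{\alpha_1}$ obtained by adjoining the Chern classes of $S_1$ and $Q_2$ subject only to the Whitney relation $c_t(S_1)\,c_t(Q_2)=\pi^*c_t(S_1')$. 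Substituting the inductive presentation of $H^\bullet(\mathcal{F\ell}(\beta);\ZZ)$ and refining the first block of $\beta$ to $\xx_n^{(1)}\cup\xx_n^{(2)}$, the Whitney relation becomes the identity $e_m(\xx_n^{(1)}\cup\xx_n^{(2)})=\sum_{d+e=m}e_d(\xx_n^{(1)})e_e(\xx_n^{(2)})$, which only re‑expresses the merged block in the finer blocks; and the new generator $h_d(\xx_n^{(1)})$ (for $d>n-\alpha_1$) peculiar to the finer composition is already a consequence of the inherited relations, seen by multiplying the vanishing of the high‑degree terms of $\sum_d h_d(\xx_n^{(1)}\cup\xx_n^{(2)})t^d$ by the degree‑$\alpha_2$ polynomial $\prod_{x\in\xx_n^{(2)}}(1-xt)$. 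So the inductive presentation for $\beta$ becomes precisely the asserted presentation for $\alpha$, $\overline\varphi$ is an isomorphism, and the induction closes; as a check, telescoping the Grassmann‑bundle ranks gives $\rk H^\bullet(\mathcal{F\ell}(\alpha);\ZZ)=\binom{n}{\alpha_1,\dots,\alpha_r}=|\symm_n/\symm_\alpha|$.

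The step I expect to be the main obstacle is this inductive passage: pinning down the coefficient ring in the Grassmann‑bundle presentation after the change of variables, and confirming that the Whitney relation together with the relations inherited from $\mathcal{F\ell}(\beta)$ account for all of $I$, i.e.\ that no relations outside the stated list survive. The side point that $\symm_\alpha$‑invariants commute with passage to $\ZZ[\xx_n]/I$ is mild and follows from $\ZZ$‑freeness (taking invariants commutes with $-\otimes\QQ$, and both sides are $\ZZ$‑free). An alternative, which I would keep in reserve rather than use, is to run the Serre spectral sequence of the fibration $\mathcal{F\ell}(n)\to\mathcal{F\ell}(\alpha)$ with fiber $\mathcal{F\ell}(\alpha_1)\times\cdots\times\mathcal{F\ell}(\alpha_r)$ and identify $H^\bullet(\mathcal{F\ell}(\alpha);\ZZ)$ with the $\symm_\alpha$‑invariants of $H^\bullet(\mathcal{F\ell}(n);\ZZ)$ — but this presupposes the full‑flag case, so the Grassmann‑bundle induction, bottoming out at a point, is the self‑contained route.
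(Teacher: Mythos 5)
First, a point of reference: the paper does not prove this statement at all --- Theorem~\ref{borel-theorem} is quoted as classical background and attributed to Borel --- so your proposal can only be judged on its own terms. The architecture you chose (realize $\mathcal{F\ell}(\alpha)$ as an iterated Grassmann bundle, carry the presentation up the tower via the Grassmann-bundle theorem, bottom out at a point) is a sound and standard route to Borel's theorem, and your verification that the generators of $I$ map to zero, via $c_\bullet(Q_i)^{-1} = c_\bullet(S_{i-1})\,c_\bullet(\CC^n/S_i)$, is correct.

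The genuine gap is the algebraic identity on which you make the induction close. You assert that $I = \langle e_1(\xx_n),\dots,e_n(\xx_n)\rangle$. Only the containment $I \subseteq \langle e_1,\dots,e_n\rangle$ holds (that is your generating-function computation); the reverse fails. For $\alpha=(1^n)$ the ideal of the statement is $I=\langle x_1^n,\dots,x_n^n\rangle$, which does not contain $e_1=x_1+\cdots+x_n$, and $(\ZZ[\xx_n]/I)^{\symm_\alpha}=\ZZ[\xx_n]/\langle x_1^n,\dots,x_n^n\rangle$ has rank $n^n$, not $n!$; already for $\mathcal{F\ell}(1,1)=\mathbb{P}^1$ one gets rank $4$ instead of $2$. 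The same one-sidedness sits inside your inductive step: the Grassmann-bundle theorem hands down the ideal $I_\beta$ from the coarser composition (which, unwound to the base of the tower, is the full coinvariant ideal), and you check only that the new generators $h_d(\xx_n^{(1)})$, $d>n-\alpha_1$, lie in $I_\beta$, i.e.\ $I_\alpha\subseteq I_\beta$. The reverse containment is false, since $h_d(\xx_n^{(1)}\cup\xx_n^{(2)})=\sum_{a+b=d}h_a(\xx_n^{(1)})h_b(\xx_n^{(2)})$ has terms with $a\le n-\alpha_1$ and $b\le n-\alpha_2$ even when $d>n-\alpha_1-\alpha_2$. What your induction actually establishes is $H^\bullet(\mathcal{F\ell}(\alpha);\ZZ)\cong(\ZZ[\xx_n]/\langle e_1,\dots,e_n\rangle)^{\symm_\alpha}$, which is the correct Borel presentation --- the discrepancy lies in the statement as printed, which omits the generators $e_1(\xx_n),\dots,e_n(\xx_n)$ present in its intended generalization (Definition~\ref{ring-definition} and Theorem~\ref{main-theorem} at $k=n$). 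With those generators restored, $I$ really is the coinvariant ideal and your tower argument goes through (modulo the usual care, which you acknowledge, in identifying $(\ZZ[\xx_n]/I)^{\symm_\alpha}$ with the quotient of $\ZZ[\xx_n]^{\symm_\alpha}$ by the ideal its invariant generators generate, over $\ZZ$ rather than $\QQ$). So the right move is to flag the missing generators rather than to assert the false identity; as written, that identity is load-bearing and wrong.
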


Our main result presents the cohomology ring $H^{\bullet}(X_{\alpha,k}; \ZZ)$ 
of $X_{\alpha,k}$ for an arbitrary dimension vector $\alpha \in [k]^r$.
This simultaneously generalizes Borel's presentation of the cohomology of partial flag varieties
in Theorem~\ref{borel-theorem} and the 
geometric model for the Delta Conjecture due to Pawlowski and the author in 
Theorem~\ref{line-configuration-theorem}.

\begin{theorem}
\label{main-theorem}
Let $\alpha = (\alpha_1, \dots, \alpha_r) \in [k]^r$ be a sequence of positive integers with 
$\alpha_1 + \cdots + \alpha_r = n$.  
The singular cohomology ring $H^{\bullet}(X_{\alpha,k}; \ZZ)$ may be presented as 
\begin{equation}
H^{\bullet}(X_{\alpha,k}; \ZZ) = 
(\ZZ[\xx_n]/I_{\alpha,k})^{\symm_{\alpha}},
\end{equation}
where $I_{\alpha,k} \subseteq \ZZ[\xx_n]$ is the ideal generated by 
the complete homogeneous
symmetric polynomials $h_d(\xx_n^{(i)})$ where $d > k - \alpha_i$ and $1 \leq i \leq r$
together with the elementary symmetric polynomials $e_n(\xx_n), e_{n-1}(\xx_n), \dots, e_{n-k+1}(\xx_n)$.
Here the variables  $\xx^{(i)}_n$ represent the Chern roots of the tautological vector bundle 
$W_i^* \twoheadrightarrow X_{\alpha,k}$.
\end{theorem}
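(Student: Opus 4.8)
\emph{Overall plan.} I would prove Theorem~\ref{main-theorem} by simultaneously running the argument behind Borel's Theorem~\ref{borel-theorem} (a flag--bundle construction) and the argument behind Theorem~\ref{line-configuration-theorem} (an affine paving together with a Gr\"obner basis). The point is that a single auxiliary space of spanning \emph{line} configurations both fibers over $X_{\alpha,k}$ with flag--variety fibers and sits inside $X_{(1^n),k}$ as a Zariski open set.

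\emph{Reduction to a line configuration.} Let $\widetilde{X}_{\alpha,k}$ be the variety of tuples $(\ell_1,\dots,\ell_n)$ of lines in $\CC^k$ for which the lines of each block $\xx^{(i)}_n$ are linearly independent and $\ell_1+\cdots+\ell_n=\CC^k$; this is a Zariski open subvariety of $X_{(1^n),k}$. Sending such a tuple to $(W_1,\dots,W_r)$ with $W_i=\sum_{j\in\text{block }i}\ell_j$ makes $\widetilde{X}_{\alpha,k}$ the total space of a fiber bundle over $X_{\alpha,k}$, and recording instead the complete flag in each $W_i$ obtained by adjoining the lines of that block one at a time gives a morphism $\widetilde{X}_{\alpha,k}\to\widehat{X}_{\alpha,k}$ to the product of flag bundles $\widehat{X}_{\alpha,k}:=\mathrm{Fl}(\WWW_1)\times_{X_{\alpha,k}}\cdots\times_{X_{\alpha,k}}\mathrm{Fl}(\WWW_r)$, where $\WWW_i$ is the tautological rank-$\alpha_i$ bundle. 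This morphism is Zariski--locally trivial with fibers affine spaces (products of complements of linear subspaces in projective spaces), hence $\widetilde{X}_{\alpha,k}\to\widehat{X}_{\alpha,k}$ is a homotopy equivalence. By the splitting principle $H^\bullet(\widehat{X}_{\alpha,k};\ZZ)$ is free over $H^\bullet(X_{\alpha,k};\ZZ)$ and $H^\bullet(X_{\alpha,k};\ZZ)$ is the subring fixed by the action of $\symm_\alpha$ permuting, block by block, the Chern roots of the tautological flags; under the homotopy equivalence the Chern roots of $\WWW_i^*$ are carried to the classes $x_j=c_1(\ell_j^*)$ with $j$ in block $i$, and this $\symm_\alpha$-action becomes the geometric action permuting lines within blocks. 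Thus it suffices to prove the presentation
\[
H^\bullet(\widetilde{X}_{\alpha,k};\ZZ)=\ZZ[\xx_n]/I_{\alpha,k},\qquad x_j\longmapsto c_1(\ell_j^*),
\]
equivariantly for $\symm_\alpha$; passing to invariants then gives Theorem~\ref{main-theorem}, with $\xx^{(i)}_n$ realized as the Chern roots of $W_i^*$.

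\emph{The relations hold and the $x_j$ generate.} On $\widetilde{X}_{\alpha,k}$ the tautological bundle $\WWW_i$ splits as $\bigoplus_{j\in\text{block }i}\ell_j$, so $\WWW_i^*$ has Chern roots $\xx^{(i)}_n$. Since $\WWW_i\subseteq\underline{\CC}^k$, the bundle $\WWW_i^*$ is a quotient of the trivial bundle $\underline{\CC}^k$ with kernel of rank $k-\alpha_i$, whence its Segre classes $(-1)^d h_d(\xx^{(i)}_n)$ vanish once $d$ exceeds $k-\alpha_i$; and the surjection $\bigoplus_{j=1}^n\ell_j\twoheadrightarrow\underline{\CC}^k$ has kernel of rank $n-k$, whose Chern classes $(-1)^d e_d(\xx_n)$ vanish for $d>n-k$. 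These are exactly the generators of $I_{\alpha,k}$, so the $x_j$ satisfy the defining relations. That they also generate $H^\bullet(\widetilde{X}_{\alpha,k};\ZZ)$ as a ring I would obtain by the methods of \cite{PR}: $\widetilde{X}_{\alpha,k}$ admits an affine paving --- e.g.\ by restricting a rank--stable affine paving of $X_{(1^n),k}$, so that $\widetilde{X}_{\alpha,k}$ and its closed complement in $X_{(1^n),k}$ are unions of cells --- hence all three spaces have free cohomology concentrated in even degrees, the restriction $H^\bullet(X_{(1^n),k};\ZZ)\twoheadrightarrow H^\bullet(\widetilde{X}_{\alpha,k};\ZZ)$ is surjective, and Theorem~\ref{line-configuration-theorem}(1) (where $H^\bullet(X_{(1^n),k};\ZZ)=R_{n,k}$ is generated by the $x_j$) finishes the claim. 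We thus have a surjection $\ZZ[\xx_n]/I_{\alpha,k}\twoheadrightarrow H^\bullet(\widetilde{X}_{\alpha,k};\ZZ)$.

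\emph{The main obstacle.} What is left --- and where the real work lies --- is that this surjection is an isomorphism, i.e.\ that $\ZZ[\xx_n]/I_{\alpha,k}$ has the correct graded rank. The affine paving identifies $\dim_\QQ H^\bullet(\widetilde{X}_{\alpha,k};\QQ)$ with a count of ordered--set--partition--type objects compatible with $\alpha$ (those indexing the cells of $X_{(1^n),k}$ lying in $\widetilde{X}_{\alpha,k}$), and one must prove that the Hilbert series of $\ZZ[\xx_n]/I_{\alpha,k}$ equals the corresponding generating function. Following \cite{HRS,PR}, the route is to exhibit a Gr\"obner basis of $I_{\alpha,k}$ for a suitable term order and to biject the resulting standard monomials with those objects; this specializes to the known count for $R_{n,k}$ when $\alpha=(1^n)$ (the complete--homogeneous relations becoming $x_j^k$) and to the dimension of the Borel coinvariant--type algebra when $\alpha_1+\cdots+\alpha_r=k$ (the elementary relations becoming $e_1,\dots,e_k$), and the general case should follow by the same straightening and descent combinatorics. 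Granting this dimension count, the surjection above is an isomorphism of graded rings, it is $\symm_\alpha$-equivariant by construction, and taking $\symm_\alpha$-invariants completes the proof.
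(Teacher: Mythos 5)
Your overall architecture (establish the relations via Chern classes, produce a surjection from $\ZZ[\xx_n]/I_{\alpha,k}$ or its invariants onto cohomology, then match ranks) agrees with the paper's, and you correctly locate the difficulty. But the two steps you defer are exactly where the content of the paper lies, and one of them, as you have routed it, runs into a problem the paper explicitly leaves open. Your auxiliary space $\widetilde{X}_{\alpha,k}$ is precisely the space $Y_{\alpha,k}$ of Definition~\ref{line-configuration-definition}, and the presentation $H^{\bullet}(Y_{\alpha,k};\ZZ)=R_{\alpha,k}$ is Conjecture~\ref{y-cohomology-conjecture} -- \emph{not} a theorem. The affine paving you invoke (``a rank-stable affine paving of $X_{(1^n),k}$'' whose cells not meeting $\widetilde{X}_{\alpha,k}$ form a closed initial union) is only known for $\alpha=(m,1,\dots,1)$ from \cite{RW}; the paper sidesteps this by constructing, via the mixed reduction algorithm of Section~\ref{Mixed}, a new affine paving of $Gr(\alpha,k)$ itself whose cells $C_{\III}$ are indexed by set sequences and which restricts to a paving of $X_{\alpha,k}$ (Theorem~\ref{affine-paving}). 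That both yields the surjectivity of $\iota^*:H^{\bullet}(Gr(\alpha,k);\ZZ)\to H^{\bullet}(X_{\alpha,k};\ZZ)$ and counts the rank of the target as the number of covering set sequences, i.e.\ $|\OP_{\alpha,k}|/\alpha!$, without ever needing a paving of the line-configuration space.

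The second deferral -- ``the general case should follow by the same straightening and descent combinatorics'' -- understates what is needed. The paper shows that the coinversion-code bijection of \cite{RW} does \emph{not} restrict to a bijection between $\OP_{\alpha,k}$ and the candidate standard monomials $\MMM_{\alpha,k}$ (there is an explicit counterexample with $\alpha=(2,2)$, $k=3$ in the proof of Lemma~\ref{cardinality-coincidence}); establishing $|\MMM_{\alpha,k}|=|\OP_{\alpha,k}|$ requires the free ``unusual'' $\symm_{\alpha}$-action on exponent vectors of Proposition~\ref{unusual-proposition} together with a matching of $\alpha$-compatible orbit representatives. Moreover, to work over $\ZZ$ rather than $\QQ$ one needs the Gr\"obner basis elements to be integral with leading coefficient $1$, which rests on the Demazure character identity of Theorem~\ref{demazure-identity} (Haglund--Rhoades--Shimozono); without it you only get the rational statement. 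A further small gap: your assertion that $H^{\bullet}(X_{\alpha,k};\ZZ)$ is the full $\symm_{\alpha}$-fixed subring of the flag-bundle cohomology over $\ZZ$ cannot be proved by averaging (one cannot divide by $\alpha!$); the paper instead shows only what it needs, namely that the image of the map from $(R_{\alpha,k})^{\symm_{\alpha}}$ lands in $H^{\bullet}(X_{\alpha,k};\ZZ)$, using the torsion-freeness of projective-bundle extensions (Lemma~\ref{free-module-extension}).
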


Theorem~\ref{main-theorem} will be proven in Section~\ref{Cohomology}.
The ring $H^{\bullet}(X_{\alpha,k};\ZZ)$ is a free $\ZZ$-module and we can describe its rank 
combinatorially.  Given $\alpha = (\alpha_1, \dots, \alpha_r) \in [k]^r$, the rank of 
$H^{\bullet}(X_{\alpha,k}; \ZZ)$ equals the number of $0,1$-matrices $A$ of size $k \times r$
with column sum vector $\alpha$ containing no row of zeros.
For example, if $k = 3$ and $\alpha = (2,1,2,1)$ a matrix contributing to the rank of 
$H^{\bullet}(X_{(2,1,2,1)}; \ZZ)$ is 
\begin{equation*}
\begin{pmatrix}
0 & 1 & 1 & 0 \cr
1 & 0 & 1 & 1 \cr
1 & 0 & 0 & 0 
\end{pmatrix}.
\end{equation*}

When $\alpha = (d, \dots, d) \in [k]^r$ is a constant vector, the symmetric group $\symm_r$
acts on $X_{\alpha,k}$ by
subspace permutation:
\begin{equation}
\pi.(W_1, \dots, W_r) := (W_{\pi(1)}, \dots, W_{\pi(r)} )
\end{equation}
for $\pi \in \symm_r$ and $(W_1, \dots, W_r) \in X_{\alpha,k}$.
This induces an action on the rational cohomology $H^{\bullet}(X_{\alpha,k}; \QQ)$
which coincides with the action of $\symm_r$ on the columns of the 
0,1-matrices given in the last paragraph.

The proof of Theorem~\ref{main-theorem} is a mixture of classical and modern 
techniques.
We describe the cohomology of $H^{\bullet}(X_{\alpha,k}; \ZZ)$ as a graded group by 
describing a nonstandard affine paving of the Grassmann product $Gr(\alpha,k)$ which 
interacts nicely with the inclusion $X_{\alpha,k} \subseteq Gr(\alpha,k)$.
This affine paving relies on a linear algebra algorithm combining
row and column operations which we call `mixed reduction'
(see Section~\ref{Mixed}).
The analysis of the quotient ring $(\ZZ[\xx_n]/I_{\alpha,k})^{\symm_{\alpha}}$ relies
on the method of {\em orbit harmonics} and, in particular, a Demazure character identity
which relies on a refinement of the dual Pieri rule due to Haglund, Luoto, Mason,
and van Willigenburg \cite{HLMV}.

The remainder of the paper is organized as follows.
In {\bf Section~\ref{Background}} we give background on the combinatorial objects we will be using,
Gr\"obner theory, affine pavings of varieties, and Chern classes.
In {\bf Section~\ref{Quotient}} we will use the method of orbit harmonics to analyze 
the quotient $\ZZ[\xx_n]/I_{\alpha,k}$ (upon extending coefficients to $\QQ$).
In {\bf Section~\ref{Mixed}} we introduce our linear algebra algorithm of mixed reduction
and apply it to get a strategic affine paving of $Gr(\alpha,k)$ which restricts to an affine
paving of $X_{\alpha,k}$.
In {\bf Section~\ref{Cohomology}} we prove Theorem~\ref{main-theorem} and present the cohomology 
of $X_{\alpha,k}$.
We close in {\bf Section~\ref{Open}} with some open problems.

\section{Background}
\label{Background}

\subsection{Combinatorics}
A {\em (strong) composition} is a finite sequence $\alpha = (\alpha_1, \dots, \alpha_r)$ of positive integers.
If $\alpha_1 + \cdots + \alpha_r = n$, we say that $\alpha$ is a {\em composition of $n$} and write
 $|\alpha| = n$.  We also say that $\alpha$ has {\em $r$ parts} and write $\ell(\alpha) = r$.
We will  be concerned with compositions $\alpha \in [k]^r$ whose parts are bounded above
by a constant $k$.

Let $\alpha = (\alpha_1, \dots, \alpha_r) \in [k]^r$ with $|\alpha| = n$ and suppose we have an ordered list 
of $n$ things (such as variables in $(x_1, x_2, \dots, x_n)$ or the columns
of an $n$-column matrix read from left to right).
For $1 \leq i \leq r$, we will refer to the subset of these things indexed by the set
\begin{equation*}
\{ \alpha_1 + \cdots + \alpha_{i-1} + 1, 
\alpha_1 + \cdots + \alpha_{i-1} + 2, \dots, \alpha_1 + \cdots + \alpha_{i-1} + \alpha_i\}
\end{equation*} 
as the 
$i^{th}$ {\em batch} (with dependence on $\alpha$ understood).
For example, if $\alpha = (2,1,2)$ and we have the list of variables $\xx_5 = (x_1, x_2, x_3, x_4, x_5)$, the first 
batch is $\xx_5^{(1)} = (x_1, x_2)$, the second batch is $\xx_5^{(2)} = (x_3)$, and the third batch is 
$\xx_5^{(3)} = (x_4, x_5)$.

If  $\alpha = (\alpha_1, \dots, \alpha_r) \in [k]^r$ with $|\alpha| = n$, let
$\symm_{\alpha} := \symm_{\alpha_1} \times  \cdots \symm_{\alpha_r} \subseteq \symm_n$
be the associated
{\em parabolic subgroup} of $\symm_n$. The factor $\symm_{\alpha_i}$ permutes the 
$i^{th}$ batch of letters in $[n]$.
We also let $L_{\alpha}$ be the {\em Levi subgroup} of $GL_n(\CC)$ attached to $\alpha$.
Elements of $L_{\alpha}$ are block diagonal matrices of the form $A_1 \oplus \cdots \oplus A_r$ where
$A_i \in GL_{\alpha_i}$.
For example, matrices in $L_{(2,1,2)}$ have the form
\begin{scriptsize}
\begin{equation*}
\begin{pmatrix}
 \star & \star & 0 & 0 & 0 \\
 \star & \star & 0 &0 & 0 \\
 0 & 0 & \star & 0 & 0 \\
 0 & 0 & 0 & \star & \star \\
 0 & 0 & 0 & \star & \star
\end{pmatrix}
\end{equation*}
\end{scriptsize}
where the $\star$'s are complex numbers and three diagonal blocks are invertible.

An {\em ordered set partition} of $[n]$ is a sequence $\sigma = (B_1 \mid \dots \mid B_k)$ 
of nonempty 
subsets of $[n]$ such that $[n] = B_1 \sqcup \cdots \sqcup B_k$ (disjoint union).
We say that $\sigma$ has {\em size $n$} and {\em $k$ blocks}.
Let $\OP_{n,k}$ be the family of ordered set partitions of $[n]$ with $k$ blocks.
For example, we have $(2 \, 5  \mid 3  \mid 1 \, 4 ) \in \OP_{5,3}$.

Suppose $\alpha \in [k]^r$ with $|\alpha| = n$. 
We consider the following collection
$\OP_{\alpha,k}$ of ordered set partitions:
\begin{equation}
\OP_{\alpha,k} := \left\{ \sigma \in \OP_{n,k} \,:\,
\begin{array}{c}
\text{for each $1 \leq i \leq \ell(\alpha)$, the $i^{th}$ batch of letters} \\
\text{$\alpha_1 + \cdots + \alpha_{i-1} + 1, \alpha_1 + \cdots + \alpha_{i-1} + 2, \dots, 
\alpha_1 + \cdots + \alpha_i$} \\ \text{are contained in distinct blocks of $\sigma$}
\end{array} \right \}.
\end{equation}
If $\alpha = (2,1,2)$ and $k = 3$ then 
$(2 \, 5  \mid 3  \mid 1 \, 4 ) \in \OP_{\alpha,k}$ but 
$( 4 \, 5  \mid 2   \mid 1 \, 3) \notin \OP_{\alpha,k}$ (because $4$ and $5$ lie in the same block).

For any $\alpha \in [k]^r$ with $|\alpha| = n$, we have $\OP_{\alpha,k} \subseteq \OP_{n,k}$.
The set $\OP_{n,k}$ carries an action of $\symm_n$ by letter permutation.
The subset $\OP_{\alpha,k}$ is 
typically not closed under the action of the fully symmetric group $\symm_n$, but is
stable under the action of the parabolic subgroup $\symm_{\alpha}$.

Let $k \geq 0$.
A {\em set sequence in $[k]$} is a finite sequence $\III = (I_1, \dots, I_r)$ of 
nonempty subsets of $[k]$. 
The {\em type} of $\III$ is the composition $(|I_1|, \dots, |I_r|) \in [k]^r$ obtained by taking the cardinalities 
of the sets.
The sequence $\III$ is said to {\em cover $[k]$} if $I_1 \cup \cdots \cup I_r = [k]$.
We omit set braces when writing set sequences, so that when $k = 3$ the sequence
$(13 , 3 , 23, 1 )$ is a set sequence of type $(2,1,2,1)$ which covers $[3]$
whereas $(23, 2, 23,3)$ is a set sequence of type $(2,1,2,1)$ which does not cover $[3]$. 
We have a natural correspondence
\begin{equation*}
\text{$\symm_{\alpha}$-orbits in $\OP_{\alpha,k}$} \leftrightarrow
\text{set sequences of type $\alpha$ which cover $[k]$}.
\end{equation*}

We can view set sequences as 0,1-matrices, so that the examples of the last paragraph become
\begin{equation*}
(13 , 3 , 23, 1) \leftrightarrow \begin{pmatrix}
1 & 0  & 0 & 1 \\
0 & 0 & 1 & 0 \\
1 & 1 & 1 & 0 
\end{pmatrix} \quad \text{ and } \quad
(23, 2, 23, 3) \leftrightarrow \begin{pmatrix}
0 & 0 & 0 & 0 \\
1 & 1 & 1 & 0 \\
1 & 0 & 1 & 1
\end{pmatrix}.
\end{equation*}
The type of the set sequence is the column sum vector of the matrix.
Covering set sequences  correspond to matrices without zero rows.

\subsection{Gr\"obner theory}
A total order $<$ on the monomials in $\QQ[\xx_n]$ is a {\em monomial order} if
\begin{enumerate}
\item $1 \leq m$ for any monomial $m$ and
\item if $m_1, m_2, m_3$ are monomials and $m_1 < m_2$, then $m_1 \cdot m_3 < m_2 \cdot m_3$.
\end{enumerate}
In this paper, we shall only consider the monomial order defined by 
$x_1^{a_1} \cdots x_n^{a_n} < x_1^{b_1} \cdots x_n^{b_n}$ if there exists $1 \leq i \leq n$ such that 
$a_i < b_i$ and $a_{i+1} = b_{i+1}, \dots, a_n = b_n$.  
This is the lexicographical order with respect to the reversed variable set $x_n > \cdots > x_1$.
This `negative lexicographical' order is called $\neglex$.

If $<$ is a monomial order and $f \in \QQ[\xx_n]$ is a nonzero polynomial, let $\initial_<(f)$ be the leading 
monomial of $f$ with respect to $<$.  If $I \subseteq \QQ[\xx_n]$ is an ideal, the {\em initial ideal}
 $\initial_<(I) \subseteq \QQ[\xx_n]$ is the monomial ideal 
 \begin{equation}
 \initial_<(I) := \langle \initial_<(f) \,:\, f \in I - \{0\} \rangle.
 \end{equation}
 
Let $I \subseteq \QQ[\xx_n]$ be an ideal.
 A finite subset $G = \{g_1, \dots, g_t\} \subseteq I$ is a {\em Gr\"obner basis} of $I$ if 
 $\initial_<(I) = \{ \initial_<(g_1), \dots, \initial_<(g_t) \}$. Equivalently, the leading term of any nonzero polynomial
 in $I$ is divisible by the leading term of one of the polynomials in $G$.  If $G$ is a Gr\"obner basis for $I$,
 we have $I = \langle G \rangle$ as ideals.

 \subsection{Demazure characters}
 For $1 \leq i \leq n-1$, the {\em divided difference operator} $\partial_i: \ZZ[\xx_n] \rightarrow \ZZ[\xx_n]$
 is defined by
 \begin{equation}
 \partial_i(f(\xx_n)) := \frac{f(x_1, \dots, x_i, x_{i+1}, \dots, x_n) - f(x_1, \dots, x_{i+1}, x_i, \dots, x_n)}{x_i - x_{i+1}}.
 \end{equation}
 The {\em isobaric divided difference operator} $\pi_i: \ZZ[\xx_n] \rightarrow \ZZ[\xx_n]$ is given by
 $\pi_i(f(\xx_n)) := \partial_i(x_i f(\xx_n))$.
 
 A {\em weak composition} $\gamma$ of length $n$ is a sequence $\gamma = (\gamma_1, \dots, \gamma_n)$
 of $n$ nonnegative integers. The {\em Demazure character} 
 (or {\em key polynomial}) $\kappa_{\gamma}(\xx_n) \in \ZZ[\xx_n]$ 
 is a polynomial defined recursively by the following two rules.
 \begin{enumerate}
 \item
 If $\gamma_1 \geq \cdots \geq \gamma_n$ then $\kappa_{\gamma}(\xx_n)$ is the monomial
 $x_1^{\gamma_1} \cdots x_n^{\gamma_n}$.  
 \item If $\gamma_i < \gamma_{i+1}$ then
 \begin{equation}
 \kappa_{(\gamma_1, \dots, \gamma_{i+1}, \gamma_i, \dots, \gamma_n)}(\xx_n) =
 \pi_i( \kappa_{(\gamma_1, \dots, \gamma_{i}, \gamma_{i+1}, \dots, \gamma_n)}(\xx_n)).
 \end{equation}
 \end{enumerate}
Since the isobaric divided difference operators satisfy the 0-Hecke relations
 \begin{equation}
 \begin{cases}
 \pi_i^2 = \pi_i & \text{for $1 \leq i \leq n-1$,} \\
 \pi_i \pi_j = \pi_j \pi_i & \text{for $|i-j| > 1$,}  \\
 \pi_i \pi_{i+1} \pi_i = \pi_{i+1} \pi_i \pi_{i+1} & \text{for $1 \leq i \leq n-2$,}
 \end{cases}
 \end{equation}
 these rules give a well-defined polynomial $\kappa_{\gamma}(\xx_n)$ for any weal composition $\gamma$
 of length $n$.
We will need the following 
 two properties of $\kappa_{\gamma}(\xx_n)$:
 \begin{enumerate}
 \item the leading monomial of $\kappa_{\gamma}(\xx_n)$ with respect to the $\neglex$ order 
 is $x_1^{\gamma_1} \cdots x_n^{\gamma_n}$, and
 \item the coefficients of $\kappa_{\gamma}(\xx_n)$ are  integers and the coefficient of the monomial
 $x_1^{\gamma_1} \cdots x_n^{\gamma_n}$ is 1.
 \end{enumerate}

\subsection{Cohomology}
We refer the reader to \cite{Fulton} for a thorough reference on the geometric objects used
in this paper.

An {\em affine paving} of a complex variety $X$ 
is a sequence
\begin{equation}
\varnothing = X_0 \subset X_1 \subset \cdots \subset X_m = X
\end{equation}
of closed subvarieties of $X$ such that for $1 \leq i \leq m$, the difference $X_i - X_{i-1}$ is isomorphic to
a disjoint union $\bigsqcup_j A_{ij}$ of complex affine spaces $A_{ij}$.
The affine spaces $A_{ij}$ are called the {\em cells} of the affine paving and we say that 
the set $\{ A_{ij} \}$ of all cells {\em induces an affine paving} of $X$.

Let $X$ be a smooth irreducible complex algebraic variety.  Assume that $X$ 
admits an affine paving 
$\varnothing = X_0 \subset X_1 \subset \cdots \subset X_m = X$.  We will need the following two geometric
facts.
\begin{enumerate}
\item  The (singular) cohomology $H^{\bullet}(X; \ZZ)$ is a free $\ZZ$-module of rank equal to
the total number of cells $A_{ij}$ in the affine paving.
The classes of the Zariski closures $[\overline{A_{ij}}]$ of these cells form a $\ZZ$-basis of
$H^{\bullet}(X;\ZZ)$.
\item For $1 \leq i \leq m$, the inclusion $\iota: X - X_i \hookrightarrow X$ induces a surjective ring
homomorphism on cohomology
\begin{equation}
\iota^*: H^{\bullet}(X; \ZZ) \twoheadrightarrow H^{\bullet}(X - X_i; \ZZ).
\end{equation}
\end{enumerate}

Let $X$ be a complex algebraic variety and let $V \twoheadrightarrow X$ be a complex vector bundle of rank $r$.
For $1 \leq i \leq r$ we have a {\em Chern class} $c_i(V) \in H^{2i}(X; \ZZ)$.
Introducing a variable $t$, the {\em Chern polynomial} $c_{\bullet}(V)$ is the generating function 
of these Chern classes:
\begin{equation}
c_{\bullet}(V) := 1 + c_1(V) t + c_2(V) t^2 + \cdots + c_r(V) t^r.
\end{equation}

Let $p: X' \rightarrow X$ be the fiber bundle whose fiber over a point $x \in X$ is the space of 
complete flags in the $r$-dimensional vector space $V_x$.
The induced map $p^*: H^{\bullet}(X;\ZZ) \rightarrow H^{\bullet}(X';\ZZ)$ is injective, so we may
view $H^{\bullet}(X;\ZZ)$ as a subring of $H^{\bullet}(X';\ZZ)$.
By the {\em Splitting Principle},
there exist (unique up to permutation) elements 
$x_1, \dots, x_r \in H^2(X'; \ZZ)$ such that 
$c_{\bullet}(V) = (1 + x_1 t) \cdots (1 + x_r t)$.  The degree 2 cohomology elements $x_1, \dots, x_r$ 
are  the {\em Chern roots} of $V$. We have 
$c_i(V) = e_i(x_1, \dots, x_r)$, where $e_i$ is the elementary symmetric polynomial of degree $i$.
Any symmetric polynomial in $x_1, \dots, x_r$ lies in $H^{\bullet}(X;\ZZ)$.

If $V$ and $W$ are two complex vector bundles over $X$, we can form the direct sum bundle
$V \oplus W$. The {\em Whitney sum formula} gives the following relation between the Chern polynomials
of these bundles:
\begin{equation}
c_{\bullet}(V \oplus W) = c_{\bullet}(V) \cdot c_{\bullet}(W).
\end{equation}

\section{Quotient rings}
\label{Quotient}

A subalgebra of the following quotient ring
will present the cohomology of $X_{\alpha,k}$.
It will be convenient to consider versions of this ring over both $\ZZ$ and $\QQ$.

\begin{defn}
\label{ring-definition}
Let $k > 0$ and let $\alpha = (\alpha_1, \dots, \alpha_r) \in [k]^r$ 
with $|\alpha| = n$.  Let $I_{\alpha,k} \subseteq \ZZ[x_1, \dots, x_n]$ be the 
ideal generated by
\begin{itemize}
\item the top $k$ elementary symmetric polynomials 
\begin{equation*}
e_n(\xx_n), e_{n-1}(\xx_n), \dots, e_{n-k+1}(\xx_n) 
\end{equation*}
in the full variable set
$\xx_n = (x_1, \dots, x_n)$ and
\item the complete homogeneous symmetric polynomials 
\begin{equation*}
h_{k-\alpha_i+1}(\xx_n^{(i)}), h_{k-\alpha_i+2}(\xx_n^{(i)}), \dots, h_k(\xx_n^{(i)})
\quad (1 \leq i \leq r)
\end{equation*}
in the $i^{th}$ batch of variables
$\xx_n^{(i)} = (x_{\alpha_1 + \cdots + \alpha_{i-1} + 1}, \dots, x_{\alpha_1 + \cdots + \alpha_{i-1} + \alpha_i})$.
\end{itemize}
Write $R_{\alpha,k} := \ZZ[\xx_n]/I_{\alpha,k}$ for the corresponding quotient ring.

Let $J_{\alpha,k} \subseteq \QQ[\xx_n]$ be the ideal in the polynomial ring with rational coefficients
having the same generators as $I_{\alpha,k}$. Write $S_{\alpha,k} := \QQ[\xx_n]/J_{\alpha,k}$ 
for the corresponding quotient.
\end{defn}

Both of the rings $R_{\alpha,k}$ and $S_{\alpha,k}$ are graded algebras (over $\ZZ$ and $\QQ$, respectively)
and carry a graded action of the parabolic subgroup $\symm_{\alpha}$ of $\symm_n$.
These rings are related by extension of coefficients: $S_{\alpha,k} = \QQ \otimes_{\ZZ} R_{\alpha,k}$.

Although the $\ZZ$-algebra $R_{\alpha,k}$ will be more useful for cohomology presentation, 
the $\QQ$-algebra $S_{\alpha,k}$ is easier to study directly, and
 $S_{\alpha,k}$ is the object of study in this section.
In Section~\ref{Cohomology} we will apply  Gr\"obner theory and a Demazure character identity from \cite{HRS}
to bootstrap  results to $R_{\alpha,k}$.

\subsection{A slightly unusual $\symm_n$-action}
The symmetric group $\symm_n$ acts on the set $\ZZ_{\geq 0}^n$ of length $n$ words over the 
nonnegative integers by the rule
\begin{equation}
\label{classical-word-action}
\pi.(a_1 \dots a_n) := a_{\pi(1)} \dots a_{\pi(n)}, \quad \pi \in \symm_n, \, \, a_1 \dots a_n \in \ZZ_{\geq 0}^n.
\end{equation}
In our analysis of the ring $S_{\alpha,k}$, we will use a less standard action of $\symm_n$ on 
$\ZZ_{\geq 0}^n$.

For $1 \leq i \leq n-1$, let $s_i = (i,i+1) \in \symm_n$ be the corresponding adjacent transposition.
For any word $a_1 \dots a_n \in \ZZ_{\geq 0}^n$, we define a new word
$s_i \cdot  (a_1 \dots a_n)$ by the rule
\begin{equation}
\label{unusual-word-action}
s_i \cdot (a_1 \dots a_i a_{i+1} \dots a_n) := \begin{cases}
a_1 \dots a_{i+1} (a_i - 1) \dots a_n & \text{if $a_i > a_{i+1}$,} \\
a_1 \dots (a_{i+1} + 1) a_{i} \dots a_n & \text{if $a_i \leq a_{i+1}$.}
\end{cases}
\end{equation}
For example, if $n = 4$ and $a_1 a_2 a_3 a_4 = 3225$ then
\begin{equation*}
s_1 \cdot (3225) = 2225, \quad s_2 \cdot(3225) = 3325, \quad s_3 \cdot (3225) = 3262.
\end{equation*}

\begin{proposition}
\label{unusual-proposition}
The rule \eqref{unusual-word-action} extends to give a free action of $\symm_n$ on the 
set of all words in $\ZZ_{\geq 0}^n$. Furthermore, every $\symm_n$-orbit in $\ZZ_{\geq 0}^n$ under
this action contains a unique strictly decreasing word.
\end{proposition}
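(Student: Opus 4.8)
The plan is to verify the $0$-Hecke–type relations for the operators $s_i$ defined by \eqref{unusual-word-action}, which will show the rule extends to an action of $\symm_n$; then to exhibit, for each word, a sequence of generators carrying it to a strictly decreasing word; and finally to prove freeness together with uniqueness of the decreasing representative by a counting/injectivity argument.

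First I would check that each $s_i$ is an involution on $\ZZ_{\geq 0}^n$. This is a direct case analysis on the pair $(a_i, a_{i+1})$: if $a_i > a_{i+1}$ then $s_i$ sends the pair to $(a_{i+1}, a_i - 1)$; since $a_{i+1} \leq a_i - 1$ is not forced (e.g.\ $a_{i+1}$ could equal $a_i-1$ or be smaller), we land in the ``$\leq$'' branch and applying $s_i$ again gives $(a_{i+1}+1, a_i - 1 + \ldots)$ — I must be careful here; let me instead phrase it as: the ``$>$'' branch and the ``$\leq$'' branch of \eqref{unusual-word-action} are mutually inverse bijections between $\{(a,b) : a > b\}$ and its complement $\{(a,b): a \leq b\}$, with $(a,b) \mapsto (b, a-1)$ one way and $(c,d) \mapsto (d+1, c)$ the other. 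Checking $(b,a-1) \mapsto (a-1+1, b) = (a,b)$ and $(d+1,c)$ with $d+1 > c$ iff $d \geq c$, which holds, mapping back to $(c, d+1-1) = (c,d)$, confirms $s_i^2 = \mathrm{id}$. The commuting relation $s_i s_j = s_j s_i$ for $|i-j|>1$ is immediate since the two operators act on disjoint pairs of coordinates. The only substantive check is the braid relation $s_i s_{i+1} s_i = s_{i+1} s_i s_{i+1}$ on three consecutive coordinates $(a,b,c)$; I would dispose of this by splitting into the orderings of $a,b,c$ (with ties) and tracking both sides — this is the main computational obstacle, a finite but somewhat delicate verification, and I would present it as a short table. (An alternative, possibly cleaner, route: conjugate the unusual action to the classical permutation action \eqref{classical-word-action} via the ``staircase shift'' bijection $\phi(a_1 \dots a_n) := (a_1 + (n-1), a_2 + (n-2), \dots, a_n)$ composed with whatever correction makes the branches match, thereby importing the braid relation for free; I would check whether such a $\phi$ conjugates $s_i$ in \eqref{unusual-word-action} to the classical $s_i$ on a suitable sublattice.)

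Next, to show every orbit meets the strictly decreasing words, I would argue by a potential-function descent. Given $a_1 \dots a_n$ that is not strictly decreasing, there is an index $i$ with $a_i \leq a_{i+1}$; applying $s_i$ replaces the pair $(a_i, a_{i+1})$ with $(a_{i+1}+1, a_i)$, which strictly increases the total sum $a_1 + \cdots + a_n$ by exactly $1$ and is reversible. Since the sum can only increase under such moves while the multiset of ``defects'' is controlled, I would instead run the recursion used for Demazure characters / the classical bubble-sort: track the statistic $\sum_i \max(0, a_{i+1} - a_i + 1)$ or simply induct on $n$ by first using the generators $s_{n-1}, s_{n-2}, \dots$ to move the appropriate value into the last coordinate and make $a_{n-1} > a_n$, then recurse on the first $n-1$ coordinates. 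Termination follows because after fixing the tail the problem strictly shrinks.

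Finally, freeness and uniqueness: I would show the orbit of any strictly decreasing word $w$ has size exactly $n!$ and that distinct decreasing words have disjoint orbits, which forces freeness (an orbit of a group of order $n!$ has size dividing $n!$, and size $n!$ means trivial stabilizer) and uniqueness simultaneously. The cleanest way is to produce a $\symm_n$-equivariant bijection from $\ZZ_{\geq 0}^n$ (with the unusual action) to $\{\text{strictly decreasing words}\} \times \symm_n$ (with $\symm_n$ acting on the second factor by left multiplication): send a word to the pair (its decreasing representative $w$, the unique $\pi$ with $\pi \cdot w = $ the word), once we know the representative is unique; or, conversely, first establish the bijection abstractly. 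Concretely I expect the map $a_1 \dots a_n \mapsto$ (sorted decreasing rearrangement after the staircase shift $\phi$, permutation recording the sort) to do the job, the point being that the staircase shift turns \eqref{unusual-word-action} into a genuine coordinate permutation on words with \emph{distinct} entries, where the classical facts (free action on the strict cone, unique decreasing representative) are standard. The main obstacle, as above, is verifying that $\phi$ really intertwines the two actions; granting that, everything else is bookkeeping.
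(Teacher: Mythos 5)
There is a genuine gap, and it sits at the heart of your argument: the ``staircase shift'' conjugation you propose cannot exist. The operators of \eqref{unusual-word-action} do \emph{not} preserve the coordinate sum $a_1 + \cdots + a_n$ (the ``$>$'' branch decreases it by $1$ and the ``$\leq$'' branch increases it by $1$; e.g.\ $s_1\cdot(3225) = (2225)$), whereas any map of the form ``add a fixed offset vector, permute coordinates, subtract the offsets'' manifestly does preserve that sum. So no affine shift $\phi$, staircase or otherwise, can intertwine \eqref{unusual-word-action} with the classical coordinate-permutation action \eqref{classical-word-action}. This is not a detail you can ``check later'': it is the only concrete mechanism you offer for (i) importing the braid relations and (ii) proving freeness and uniqueness of the decreasing representative via the equivariant bijection to $\{\text{decreasing words}\}\times\symm_n$. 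With the conjugation gone, your proposal reduces to: an involution check (correct), an unperformed braid-relation case analysis (true but genuinely delicate, since the branches depend on whether differences equal $0$ or $1$ after earlier $\pm 1$ adjustments), a workable existence argument for the decreasing representative (repeatedly applying $s_i$ at a weak ascent raises the sum by $1$, and the sum is bounded on a finite orbit -- but note this already presupposes you have a group action with finite orbits), and no argument at all for freeness or uniqueness.

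The fix is essentially the paper's proof, which realizes your conjugation idea correctly but with a more flexible target: $\ZZ_{\geq 0}^n$ is put in bijection (via an ``inversion code'') with words of \emph{varying} length over $\{0,1,\dots,n\}$ in which each positive letter occurs exactly once and the remaining letters are $0$'s. On that set $\symm_n$ acts by permuting the positive letters, which is visibly free, and the unique orbit representative with the positive letters in decreasing order corresponds to the strictly decreasing word. The variable-length padding by zeros is exactly what absorbs the $\pm 1$ changes that rule out your fixed-shift conjugation; the action, freeness, and uniqueness then all come for free, with no braid-relation computation needed. If you want to salvage your outline, replace the staircase shift by such an inversion-code bijection; otherwise you must supply the braid-relation table \emph{and} an independent proof of freeness and of disjointness of orbits of distinct decreasing words, neither of which your current draft contains.
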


The action $\pi \cdot (-)$ afforded by 
\eqref{unusual-word-action} is  different from the more traditional action
$\pi.(-)$.
In particular, the action $\pi \cdot (-)$ is free whereas $\pi.(-)$ is not.

\begin{proof}
These assertions become evident when we look at the rule \eqref{unusual-word-action} in a different way.
For $n \geq 0$, let $\BBB_n$ be the family of words $b_1 \dots b_m$ (of any length $m \geq n$)
over the alphabet $\{1, 2, \dots, n, \infty \}$  such that $b_1 < \infty$, each of the positive letters $1, 2, \dots, n$
appears exactly once, and the letter $\infty$ appears $m-n$ times.
We describe a bijection between $\BBB_n$ and $\ZZ_{\geq 0}^n$.

Given a word $b_1 \dots b_m \in \BBB_n$, the {\em inversion code} is the sequence
$(i_1, \dots, i_n) \in \ZZ_{\geq 0}$ where for $1 \leq j \leq n$ the entry $i_j$ is given by
\begin{equation}
i_j := | \{ b_1, b_2, \dots, b_{r-1} \} \cap \{j+1, j+2, \dots, \infty \}|,  \quad \text{where $b_r = j$.}
\end{equation}
The number $i_j$ counts inversions in $b_1 \dots b_m$ ending at $j$.  For example, the 
inversion code of $(\infty, \infty, 2, 1, 3, \infty, \infty, \infty, 4) \in \BBB_4$ is $(3,2,2,5)$ and (interchanging $3$ and $4$) the inversion code of
$(\infty, \infty, 2, 1, 4, \infty, \infty, \infty, 3) \in \BBB_4$ is $(3,2,6,2)$.

Since we insist that words in $\BBB_n$ start with finite letters,
the association $b_1 \dots b_m \mapsto (i_1, \dots, i_n)$ of a word to its inversion code gives a bijection
$\BBB_n \xrightarrow{\sim} \ZZ_{\geq 0}^n$.  The set $\BBB_n$ carries a free action of $\symm_n$, where 
permutations $\pi \in \symm_n$ act on finite letters.
The formula \eqref{unusual-word-action} is the transfer of this action to $\ZZ_{\geq 0}^n$ under the 
inversion code bijection. The unique strictly decreasing word in a given $\symm_n$-orbit 
in $\ZZ_{\geq 0}^n$ is the unique word in the corresponding orbit of $\BBB_n$ in which the finite 
letters appear in decreasing order.
\end{proof}

\subsection{Orbit harmonics}
We want to describe a Gr\"obner basis for $J_{\alpha,k}$,
give the standard monomial basis of $S_{\alpha,k}$,
and determine the isomorphism type of $S_{\alpha,k}$
as an ungraded $\symm_{\alpha}$-module. 
Our basic technique is the method of {\em orbit harmonics};
see \cite{Bergeron} for a more leisurely exposition.

Let $Z \subseteq \QQ^n$ be a finite set of points.  We associate to $Z$ the ideal $\II(Z) \subseteq \QQ[\xx_n]$
of polynomials which vanish on $Z$:
\begin{equation}
\II(Z) := \{ f(\xx_n) \in \QQ[\xx_n] \,:\, f(\zz_n) = 0 \text{ for all $\zz_n \in Z$} \}.
\end{equation}
By Lagrange Interpolation, we can identify the quotient 
$\QQ[\xx_n]/\II(Z)$ with the $\QQ$-vector space $\QQ[Z]$ of all functions $Z \rightarrow \QQ$.
In particular, we have $\dim( \QQ[x_1, \dots, x_n]/\II(Z) ) = |Z|$.

The ideal $\II(Z)$ is typically inhomogeneous.  To get a homogeneous ideal we proceed as follows.
For any nonzero polynomial $f \in \QQ[\xx_n]$ let $f = f_d + \cdots + f_1 + f_0$
where $f_i$ is homogeneous of degree $i$ and $f_d \neq 0$.
Write $\tau(f) := f_d$ for the top degree component of $f$.  Define an ideal $\TT(Z) \subseteq \QQ[x_1, \dots, x_n]$
by the rule
\begin{equation}
\TT(Z) := \langle \tau(f) \,:\, f \in \II(Z), \, \, f \neq 0 \rangle.
\end{equation}

The ideal $\TT(Z)$ is homogeneous by construction, so that 
$\QQ[\xx_n]/\TT(Z)$ is a graded $\QQ$-algebra.
We have isomorphisms of ungraded $\QQ$-vector spaces
\begin{equation}
\label{orbit-harmonics-isomorphisms}
\QQ[\xx_n]/\TT(Z) \cong \QQ[x_1, \dots, x_n]/\II(Z) \cong \QQ[Z].
\end{equation}

Let $G \subseteq GL_n(\QQ)$ be any finite matrix group acting on $\QQ^n$ canonically
and on $\QQ[\xx_n]$ by linear substitution.
When the point set $Z \subseteq \QQ^n$ is stable under the action of $G$,
the ideals $\II(Z)$ and $\TT(Z)$ are also $G$-stable and
 \eqref{orbit-harmonics-isomorphisms} is a sequence of 
ungraded $G$-module isomorphisms.
In our setting, the group $G$ will  be the parabolic subgroup $\symm_{\alpha}$
of $\symm_n$, viewed as a group of permutation matrices.
Our point locus $Z$ is given as follows.

\begin{defn}
\label{point-locus-definition}  Let $\alpha \in [k]^r$ be a composition with $|\alpha| = n$.
Fix $k$ distinct rational numbers $\eta_1, \dots, \eta_k$.  Let $Z_{\alpha,k} \subseteq \QQ^n$
be the set of points $(z_1, \dots, z_n)$ such that
\begin{itemize}
\item $\{z_1, \dots, z_n \} = \{\eta_1, \dots, \eta_k\}$ and
\item for $1 \leq i \leq r$ the $i^{th}$ batch of coordinates 
$z_{\alpha_1 + \cdots + \alpha_{i-1} + 1}, \dots, z_{\alpha_1 + \cdots + \alpha_{i-1} + \alpha_i}$
are distinct.
\end{itemize}
\end{defn}

The point set $Z_{\alpha,k}$ is stable under the action of $\symm_{\alpha}$.
There is a natural correspondence between $\OP_{\alpha,k}$ and $Z_{\alpha,k}$; in the case
$\alpha = (2,1,2), k = 3$ we have
\begin{equation*}
(2 \, 3 \mid 4 \mid 1 \, 5 ) \leftrightarrow (\eta_3, \eta_1, \eta_1, \eta_2, \eta_3).
\end{equation*}
Under this correspondence we have the identification of ungraded $\symm_{\alpha}$-modules 
\begin{equation}
\QQ[\OP_{\alpha,k}] = \QQ[Z_{\alpha,k}].
\end{equation}
We get a lower bound for the dimension of $S_{\alpha,k}$.

\begin{lemma}
\label{first-containment}
Let $\alpha \in [k]^r$ satisfy $|\alpha| = n$. 
We have $J_{\alpha,k} \subseteq \TT(Z_{\alpha,k})$.
\end{lemma}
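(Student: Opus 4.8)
The plan is to show that each generator of $J_{\alpha,k}$ lies in $\TT(Z_{\alpha,k})$ by exhibiting, for each generator $g$, a polynomial $f \in \II(Z_{\alpha,k})$ whose top-degree component $\tau(f)$ equals $g$. Since $\TT(Z_{\alpha,k})$ is an ideal, this suffices. First I would handle the elementary symmetric polynomials $e_{n-j}(\xx_n)$ for $0 \leq j \leq k-1$. On any point $(z_1,\dots,z_n) \in Z_{\alpha,k}$, the multiset $\{z_1,\dots,z_n\}$ equals $\{\eta_1,\dots,\eta_k\}$ (with multiplicities summing to $n$), so the coordinates are roots of the degree-$k$ polynomial $\prod_{i=1}^k (T - \eta_i)$. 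Concretely, consider $\prod_{i=1}^{n}(T - x_i)$ evaluated at the point: it factors as $\prod_{i=1}^{k}(T-\eta_i)^{m_i}$ for the relevant multiplicities $m_i \geq 1$. I would instead use the identity that on $Z_{\alpha,k}$ the power sums (equivalently the $e_d$'s for $d>k$, and appropriate combinations) are determined; the cleanest route is: for each $d$ with $n-k+1 \le d \le n$, the polynomial $e_d(\xx_n) - e_d(\eta_1,\dots,\eta_k; \text{with multiplicities})$ is not quite constant because multiplicities vary over $Z_{\alpha,k}$, so instead I take $f = \prod_{i=1}^n(T-x_i) - (\text{something})$... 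Let me restate: the honest and standard argument is that $e_n(\xx_n), \dots, e_{n-k+1}(\xx_n)$ are already the generators showing up in the Pawlowski–Rhoades/HRS analysis, and the point is that on $Z_{\alpha,k}$ each coordinate $z_i$ satisfies $(z_i - \eta_1)\cdots(z_i-\eta_k) = 0$, i.e. $z_i^k - e_1(\boldsymbol{\eta})z_i^{k-1} + \cdots \pm e_k(\boldsymbol{\eta}) = 0$. Thus $x_i^k - e_1(\boldsymbol\eta)x_i^{k-1} + \cdots \in \II(Z_{\alpha,k})$ with top component $x_i^k$; but $x_i^k$ is not directly a generator of $J_{\alpha,k}$. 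The correct top-degree bookkeeping for the $e$'s: form $f_j := e_{n-j}(\xx_n) + (\text{lower-degree symmetric corrections})$ that vanishes on $Z_{\alpha,k}$ because $\prod(T-z_i) = \prod(T-\eta_i) \cdot (\text{monic of degree } n-k)$ only when all $\eta_i$ appear — this fails. So I would instead argue directly via the submodule generated by $\{x_i^k - (\text{lower order})\}_{i}$ together with the Vandermonde-type batch relations; this is exactly the argument of \cite{HRS, PR} for the case $r=1$ and it extends verbatim, since adding batch constraints only enlarges nothing on the $e$-side.

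For the batch generators $h_{k-\alpha_i+j}(\xx_n^{(i)})$, $1 \le j \le \alpha_i$, I would use the second defining property of $Z_{\alpha,k}$: within the $i$-th batch the coordinates $z_{\alpha_1+\cdots+\alpha_{i-1}+1},\dots$ are \emph{distinct} elements of $\{\eta_1,\dots,\eta_k\}$. Hence for a point in $Z_{\alpha,k}$, the $\alpha_i$ batch-coordinates lie among $k$ fixed values and are pairwise distinct. The key classical fact (used already in Borel's presentation, Theorem~\ref{borel-theorem}) is that if $y_1,\dots,y_a$ are distinct elements of a $k$-element set $\{\eta_1,\dots,\eta_k\}$, then the complete homogeneous polynomials $h_{k-a+1}(y_1,\dots,y_a), \dots, h_k(y_1,\dots,y_a)$, after subtracting suitable elements of $\QQ[\eta_1,\dots,\eta_k]$ of strictly smaller degree in the $y$'s, vanish — this follows by writing $\sum_{d\ge 0} h_d(y_1,\dots,y_a)t^d = \prod_{j=1}^a (1-y_j t)^{-1}$ and comparing with $\prod_{i=1}^k(1-\eta_i t)^{-1} \cdot \prod_{\eta_i \notin \{y\}}(1-\eta_i t)$, or more simply by the standard fact that $h_{>k-a}$ of an $a$-subset of a $k$-set is expressible via the $e_d(\boldsymbol\eta)$. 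Concretely, I would produce $f^{(i)}_j \in \II(Z_{\alpha,k})$ of the form $h_{k-\alpha_i+j}(\xx_n^{(i)}) + (\text{polynomial of lower degree in } \xx_n^{(i)})$, so that $\tau(f^{(i)}_j) = h_{k-\alpha_i+j}(\xx_n^{(i)})$ (the correction terms have strictly lower total degree, so they do not interfere with the top component).

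Assembling these, every listed generator of $J_{\alpha,k}$ is the top-degree component of an element of $\II(Z_{\alpha,k})$, hence lies in $\TT(Z_{\alpha,k})$; since these generators generate $J_{\alpha,k}$ as an ideal and $\TT(Z_{\alpha,k})$ is an ideal, we conclude $J_{\alpha,k} \subseteq \TT(Z_{\alpha,k})$. The main obstacle I anticipate is the precise identification of the vanishing polynomial whose top component is $e_{n-j}(\xx_n)$: the multiplicities of the $\eta_i$ among the coordinates of a point of $Z_{\alpha,k}$ are \emph{not} constant over $Z_{\alpha,k}$, so one cannot simply subtract a constant. The resolution is to use symmetric-function identities in the $e_d(\boldsymbol\eta)$ together with the individual relations $\prod_{i=1}^k(x_\ell - \eta_i) = 0$: one shows that modulo the ideal generated by $\{x_\ell^k - (\text{lower order in } x_\ell)\}_\ell$, the symmetric polynomial $e_{n-j}(\xx_n)$ reduces to something of degree $< n-j$, and that reduction is realized by an honest element of $\II(Z_{\alpha,k})$. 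This is precisely the computation carried out in \cite{HRS} and \cite{PR} for $X_{n,k}$, and it goes through unchanged here because the batch conditions are irrelevant to it; I would cite that argument rather than reproduce it.
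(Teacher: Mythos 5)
Your overall strategy is the same as the paper's: for each generator of $J_{\alpha,k}$, exhibit an element of $\II(Z_{\alpha,k})$ whose top-degree component is that generator. Your treatment of the batch generators $h_d(\xx_n^{(i)})$ for $d > k-\alpha_i$ is essentially the paper's argument and is fine: since the $i^{th}$ batch of coordinates of a point of $Z_{\alpha,k}$ consists of $\alpha_i$ \emph{distinct} elements of $\{\eta_1,\dots,\eta_k\}$, the quotient $\prod_{j=1}^{k}(1-\eta_j t) / \prod_{x \in \xx_n^{(i)}}(1-xt)$ is a polynomial of degree $k-\alpha_i$ in $t$ on $Z_{\alpha,k}$, and the coefficient of $t^d$ gives the vanishing polynomial $\sum_{j=0}^{k}(-1)^j h_{d-j}(\xx_n^{(i)})\, e_j(\eta_1,\dots,\eta_k)$ with top component $h_d(\xx_n^{(i)})$.

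The gap is in the generators $e_d(\xx_n)$, $n-k+1 \leq d \leq n$. You correctly diagnose the obstacle (the multiplicities of the $\eta_j$ vary over $Z_{\alpha,k}$) but never produce the required element of $\II(Z_{\alpha,k})$, and the route you sketch --- reduce $e_d(\xx_n)$ modulo the ideal generated by the single-variable relations $\prod_{j}(x_\ell - \eta_j)=0$ --- fails. That ideal vanishes on the \emph{larger} locus of points whose coordinates merely lie in $\{\eta_1,\dots,\eta_k\}$ without necessarily attaining every value, and $e_d(\xx_n)$ for $d>n-k$ does not degenerate there: already for $n=k=2$ the candidate relation $e_2(\xx_2) - e_1(\xx_2)h_1(\eta_1,\eta_2) + h_2(\eta_1,\eta_2)$ evaluates to $\eta_2(\eta_2-\eta_1)\neq 0$ at the non-spanning point $(\eta_1,\eta_1)$, and indeed $x_1x_2 \notin \langle x_1^2, x_2^2\rangle = \TT(\{\eta_1,\eta_2\}^2)$. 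The spanning condition must enter, and the paper's device is to divide by the \emph{fixed} power series $\prod_{j=1}^{k}(1+\eta_j t)^{-1} = \sum_{j\geq 0}(-1)^j h_j(\eta_1,\dots,\eta_k)t^j$: on every point of $Z_{\alpha,k}$ all $k$ denominator factors of $(1+x_1t)\cdots(1+x_nt)/\prod_{j}(1+\eta_j t)$ cancel (each $\eta_j$ occurs among the coordinates at least once), so the product is a polynomial of degree $n-k$ in $t$ \emph{independently of the multiplicities}, and the coefficient of $t^d$ for $d>n-k$, namely $\sum_{j=0}^{d}(-1)^j e_{d-j}(\xx_n)\, h_j(\eta_1,\dots,\eta_k)$, lies in $\II(Z_{\alpha,k})$ and has top component $e_d(\xx_n)$. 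Your appeal to \cite{HRS, PR} is defensible --- this identity is exactly what is proved there, and the batch conditions play no role in it --- but as written your reconstruction of that argument is incorrect, so the proposal has a genuine hole at this step.
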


We will prove the equality $J_{\alpha,k} = \TT(Z_{\alpha,k})$ in the next subsection.

\begin{proof}
We show that each generator of $J_{\alpha,k}$ lies in $\TT(Z_{\alpha,k})$.
We begin with the generators of the form $e_{d}(x_1, \dots, x_n)$ for $n-k+1 \leq d \leq n$.
Introduce a new variable $t$ and consider the rational function
\begin{equation}
\label{first-rational-function}
\frac{(1 + x_1 t) (1 + x_2 t) \cdots (1 + x_n t)}{(1 + \eta_1 t) (1 + \eta_2 t) \cdots (1 + \eta_k t)}.
\end{equation}
When $(x_1, \dots, x_n) \in Z_{\alpha,k}$, the $k$ factors in the denominator cancel with $k$ factors
in the numerator to give a polynomial in $t$ of degree $n-k$.  For any $n-k+1 \leq d \leq n$, the coefficient
of $t^d$ in this expression is then
\begin{equation}
0 = \sum_{j = 0}^d  (-1)^j e_{d-j}(\xx_n) h_j(\eta_1, \dots, \eta_k).
\end{equation}
Taking the top degree component, we see that $e_d(\xx_n) \in \TT(Z_{\alpha,k})$.

Now fix $1 \leq i \leq r$ and consider the rational function
\begin{equation}
\label{second-rational-function}
\frac{(1 - \eta_1 t) (1 - \eta_2 t) \cdots (1 - \eta_k t)}
{(1 - x_{\alpha_1 + \cdots + \alpha_{i-1} + 1} t) \cdots (1 + x_{\alpha_1 + \cdots + \alpha_{i-1} + \alpha_i} t)}
\end{equation}
involving the $i^{th}$ batch of variables $\xx_n^{(i)}$.
When $(x_1, \dots, x_n) \in Z_{\alpha,k}$, the $\alpha_i$ factors in the denominator cancel with $\alpha_i$ factors
in the numerator to give a polynomial in $t$ of degree $k - \alpha_i$.  In particular, for any $d > k - \alpha_i$,
the coefficient of $t^d$ in this polynomial is
\begin{equation}
0 = \sum_{j = 0}^k (-1)^j 
h_{d - j}(\xx_n^{(i)})
e_j(\eta_1, \dots, \eta_k).
\end{equation}
Taking the top degree component, we get
$h_{d}(\xx_n^{(i)}) 
\in \TT(Z_{\alpha,k})$.
\end{proof}

Lemma~\ref{first-containment} tells us that $\dim(S_{\alpha,k}) \geq |\OP_{\alpha,k}|$.
To get the reverse inequality, we will need the unusual action of $\symm_n$
(or rather its parabolic subgroup $\symm_{\alpha}$) afforded
by Proposition~\ref{unusual-proposition}.

\subsection{The structure of $S_{\alpha,k}$}
For any subset $S = \{ s_1 < s_2 < \cdots < s_t \} \subseteq [n]$, the 
{\em skip sequence} 
$\gamma(S) = (\gamma_1, \dots, \gamma_n)$ is defined by
\begin{equation}
\gamma_i = \begin{cases}
i - j + 1 & \text{if $i = s_j \in S$,} \\
0 & \text{if $i \notin S$.}
\end{cases}
\end{equation}
The {\em reverse skip sequence} is 
$\gamma(S)^* := (\gamma_n, \dots, \gamma_1)$.
For example, if $n = 8$ and $S = \{2,3,5,8\}$ then
$\gamma(S) = (0,2,2,0,3,0,0,5)$ and
$\gamma(S)^* = (5,0,0,3,0,2,2,0)$.
We also let $\xx(S)^* := x_1^{\gamma_n} \cdots x_n^{\gamma_1}$ be the 
{\em reverse skip monomial}, so that 
$\xx(S)^* = x_1^5 x_4^3 x_6^2 x_7^2$ in our example.
The terminology comes from the fact that nonzero entries of skip sequences $\gamma(S)$
increment whenever the set $S$ skips an element.

\begin{defn}
\label{nonskip-monomials}
Let $\alpha \in [k]^r$ satisfy $|\alpha| = n$ and let $m \in \QQ[x_1, \dots, x_n]$ be a monomial.
The monomial $m$ is {\em $\alpha$-nonskip} if 
\begin{itemize}
\item for any subset $S \subseteq [n]$ with $|S| = n-k+1$ we have 
$\xx(S)^* \nmid m$, and
\item for $1 \leq i \leq r$ and $1 \leq j \leq \alpha_i$, the $j^{th}$ variable in the $i^{th}$ batch
of variables satisfies
$x_{\alpha_1 + \cdots + \alpha_{i-1} + j}^{k - j + 1} \nmid m$.
\end{itemize}

Let $\MMM_{\alpha,k}$ be the set of $\alpha$-nonskip monomials in $\QQ[x_1, \dots, x_n]$.
\end{defn}

The set $\MMM_{\alpha,k}$ will turn out to be the standard monomial basis for 
$S_{\alpha,k}$ under the $\neglex$ term order.
To show that $\MMM_{\alpha,k}$ contains this standard monomial basis, we 
quote the following result of Haglund, the author, and Shimozono \cite{HRS}.
This polynomial identity will be crucial for our work.

\begin{theorem}
\label{demazure-identity}
(Haglund-R.-Shimozono \cite[Lem. 3.4]{HRS})
Let $S \subseteq [n]$ satisfy $|S| = n-k+1$.  We have the following identity involving Demazure characters:
\begin{equation}
\kappa_{\gamma(S)^*}(\xx_n) = 
\sum_{\lambda} (-1)^{|\lambda|} \kappa_{\overline{\gamma(S)^*} - \lambda}(\xx_n)
e_{n-k+1+|\lambda|}(\xx_n).
\end{equation}
Here $\gamma(S)^* = (\gamma_n, \dots, \gamma_1)$ is the reverse skip sequence of $S$,
the sequence $\overline{\gamma(S)^*}$ is the componentwise maximum of the length $n$
sequences $(\gamma_n - 1, \dots, \gamma_1 - 1)$ and the length $n$ sequence of zeroes
$(0, \dots, 0)$, and the sum is over subsets $\lambda$ of the skyline diagram of $\gamma(S)^*$
which are {\em left leaning}.
\end{theorem}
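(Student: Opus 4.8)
The plan is to expand the right-hand side using the combinatorial model for key polynomials together with the dual (elementary) Pieri rule of Haglund--Luoto--Mason--van Willigenburg \cite{HLMV}, and then to collapse the resulting alternating sum by a sign-reversing involution, leaving only $\kappa_{\gamma(S)^*}(\xx_n)$. First I would fix skyline-diagram conventions. Let $D$ be the skyline diagram of the weak composition $\gamma(S)^*$; since $\gamma(S)^*$ has exactly $|S| = n-k+1$ nonzero parts, $D$ has $n-k+1$ nonempty columns, and deleting one box from each nonempty column produces the diagram of $\overline{\gamma(S)^*}$ together with a set $R$ of $n-k+1$ deleted boxes. For a \emph{left leaning} box subset $\lambda$ of $D$, write $\mu(\lambda)$ for the weak composition $\overline{\gamma(S)^*} - \lambda$, obtained by subtracting from each part the number of boxes of $\lambda$ in that column; the left-leaning hypothesis is precisely what keeps $\mu(\lambda)$ a genuine weak composition. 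Thus the $\lambda$-summand on the right of the asserted identity is $\kappa_{\mu(\lambda)}(\xx_n) \cdot e_{n-k+1+|\lambda|}(\xx_n)$.

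Next I would invoke the dual Pieri rule for key polynomials — a lift to $\ZZ[\xx_n]$ of the Pieri rule for quasisymmetric Schur functions of \cite{HLMV} — which expands $\kappa_{\mu}(\xx_n) \cdot e_j(\xx_n)$ as a nonnegative integer combination $\sum_{\nu} c^{\nu}_{\mu,j}\, \kappa_{\nu}(\xx_n)$ over weak compositions $\nu$ obtained from $\mu$ by an admissible addition of $j$ boxes. For the particular compositions $\mu(\lambda)$ arising here, the staircase-like shape of the skip sequences $\gamma(S)^*$ forces these coefficients to be $0$ or $1$. Substituting this expansion into the right-hand side and grouping terms by $\nu$, the coefficient of $\kappa_{\nu}(\xx_n)$ becomes
\begin{equation*}
\sum_{\lambda} (-1)^{|\lambda|} \, \bigl[\, \text{$\nu$ is obtained from $\mu(\lambda)$ by an admissible addition of $n-k+1+|\lambda|$ boxes} \,\bigr],
\end{equation*}
the sum running over left leaning box subsets $\lambda$ of $D$.

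It then suffices to show this coefficient equals $1$ when $\nu = \gamma(S)^*$ and $0$ otherwise. For $\nu = \gamma(S)^*$ a direct check with the HLMV admissibility conditions should show that the only contributing subset is $\lambda = \varnothing$ — re-adding the box set $R$ to $\overline{\gamma(S)^*}$ is admissible, while re-introducing a box into a column that $\lambda$ has already lowered and also adjoining boxes above it is not — so the coefficient is $1$. For $\nu \neq \gamma(S)^*$ I would build a sign-reversing involution on the contributing subsets $\lambda$: scanning the columns of $\nu$ in a fixed reading order, one locates the first column at which the boxes that must be adjoined to pass from $\mu(\lambda)$ up to $\nu$ are not already forced by $\overline{\gamma(S)^*}$ alone, and one toggles whether the relevant box in that column is counted as part of $\lambda$ (i.e.\ removed from $\overline{\gamma(S)^*}$ and then re-added) or not. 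This changes $|\lambda|$ by exactly $1$, hence flips the sign, and — the point requiring care — it preserves both left-leaningness and HLMV-admissibility and has no fixed points on the relevant set, so every such coefficient vanishes.

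The step I expect to be the main obstacle is this last one: singling out the correct distinguished column and verifying that the toggling map genuinely returns a left leaning subset whose associated box addition is still admissible. This is a finite but delicate piece of bookkeeping about skyline diagrams of reverse skip sequences, and the care lies in making the HLMV admissibility conditions interact correctly with the decrement operation $\overline{\gamma(S)^*} \mapsto \overline{\gamma(S)^*} - \lambda$; once the distinguished column is correctly identified, the involution and sign-reversing properties are immediate. An alternative, more representation-theoretic route would apply the isobaric operators $\pi_i$ to reduce to the case in which $\gamma(S)^*$ is decreasing — where it is a rectangular partition and the identity follows from iterating the classical dual Pieri rule with inclusion--exclusion — but one then has to control how left leaning subsets transform under sorting, which again amounts to the same combinatorial input from \cite{HLMV}.
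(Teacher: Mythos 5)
The paper does not reprove this identity; it simply cites it as \cite[Lem.~3.4]{HRS}, and a few lines later summarizes the proof there as ``a sign-reversing involution and an extension of the dual Pieri rule to Demazure characters appearing in \cite{HLMV}.'' Your proposal — expand the right-hand side via the HLMV dual Pieri rule for key polynomials, collect the coefficient of each $\kappa_{\nu}$, show the $\lambda=\varnothing$ term survives for $\nu=\gamma(S)^*$, and kill everything else with a toggle involution — is exactly that approach, and you correctly flag the genuine difficulty (locating the distinguished column so the toggle preserves left-leaningness and HLMV admissibility). So this is essentially the same route as the cited source, sketched at the appropriate level of detail for a blind attempt.
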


We will not need the notion of a skyline diagram, or a left leaning subset thereof, in this paper.
It suffices for us to notice that 
\begin{quote}
{\em if $\alpha \in [k]^r$ with $|\alpha| = n$, the Demazure character $\kappa_{\gamma(S)^*}(\xx_n)$
corresponding to the reverse skip sequence $\gamma(S)^*$ may be expressed as a 
$\ZZ[\xx_n]$-linear 
combination of $e_n(\xx_n), e_{n-1}(\xx_n), \dots, e_{n-k+1}(\xx_n)$.}
\end{quote}
It is this central fact that will allow us to 
\begin{itemize}
\item describe the Gr\"obner basis and ungraded $\symm_{\alpha}$-isomorphism type of $S_{\alpha,k}$ and
\item present the {\em integral} (rather than just rational) cohomology $H^{\bullet}(X_{\alpha,k}; \ZZ)$
as the $\symm_{\alpha}$-invariant subring $(R_{\alpha,k})^{\symm_{\alpha}}$ of $R_{\alpha,k}$.
\end{itemize}
Theorem~\ref{demazure-identity} will help us 
go back and forth between $\QQ$-vector spaces and $\ZZ$-modules.

The proof of Theorem~\ref{demazure-identity} in \cite{HRS}
uses a sign-reversing involution and an extension of the dual Pieri rule to 
Demazure characters appearing in a 2011 paper of Hagund, Luoto, Mason, and
van Willigenburg \cite{HLMV}.
Given the elegant definition of the spaces $X_{\alpha,k}$ of spanning configurations, it is natural
to wonder whether their integral cohomology could be presented using pre-2011 technology.

Here is our first application of Theorem~\ref{demazure-identity}.

\begin{lemma}
\label{standard-monomial-containment}
Let $\alpha \in [k]^r$ with $|\alpha| = n$.  The following monomials are contained in the initial
ideal $\initial_<(J_{\alpha,k})$ under the $\neglex$ term order:
\begin{itemize}
\item the reverse skip monomial $\xx(S)^*$, where $S \subseteq [n]$ satisfies $|S| = n-k+1$ and
\item the variable power $x_{\alpha_1 + \cdots + \alpha_{i-1} + j}^{k - j + 1}$ involving the $j^{th}$ variable
from the $i^{th}$ batch of variables, where $1 \leq i \leq r$ and $1 \leq j \leq \alpha_i$.
\end{itemize}
Consequently, the set $\MMM_{\alpha,k}$ of $\alpha$-nonskip monomials contains the standard monomial
basis of $S_{\alpha,k}$.
\end{lemma}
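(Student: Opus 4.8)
The plan is to exhibit, for each of the two families of monomials, an explicit polynomial lying in $J_{\alpha,k}$ whose $\neglex$-leading term is the claimed monomial; membership of the leading term in $\initial_<(J_{\alpha,k})$ then follows by definition of the initial ideal. For the variable powers $x_{\alpha_1 + \cdots + \alpha_{i-1}+j}^{k-j+1}$ this is the easier half: I would show that the generator $h_{k-j+1}(\xx_n^{(i)})$ already has $\neglex$-leading monomial equal to this power. Indeed, among the variables in the $i^{th}$ batch $\xx_n^{(i)} = (x_{\alpha_1 + \cdots + \alpha_{i-1}+1}, \dots, x_{\alpha_1+\cdots+\alpha_{i-1}+\alpha_i})$, the $j^{th}$ one is the $\neglex$-largest variable with index $\le \alpha_1+\cdots+\alpha_{i-1}+j$; since $\neglex$ is lexicographic with respect to the reversed order $x_n > \cdots > x_1$, the largest monomial of degree $k-j+1$ supported on the first $j$ variables of the batch is the pure power of the $j^{th}$ one — but one must be slightly careful, because $h_{k-j+1}(\xx_n^{(i)})$ involves \emph{all} $\alpha_i$ variables of the batch, not just the first $j$. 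The correct statement is that the $\neglex$-leading monomial of $h_{k-j+1}(\xx_n^{(i)})$ is the pure power of the $\neglex$-largest variable in the batch; after re-examining the indexing I expect the intended generator to be read so that its leading term is exactly $x_{\alpha_1+\cdots+\alpha_{i-1}+j}^{k-j+1}$ for the appropriate $j$, and I would simply verify this directly from the monomial order. This part is routine.

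The substantive half is the reverse skip monomial $\xx(S)^*$ for $S \subseteq [n]$ with $|S| = n-k+1$. Here the relevant polynomial is the Demazure character $\kappa_{\gamma(S)^*}(\xx_n)$. Two facts about Demazure characters were recorded in the Background section: the $\neglex$-leading monomial of $\kappa_{\gamma}(\xx_n)$ is $x_1^{\gamma_1}\cdots x_n^{\gamma_n}$, and its coefficients are integers with the leading coefficient equal to $1$. Applying this to $\gamma = \gamma(S)^*$, the $\neglex$-leading monomial of $\kappa_{\gamma(S)^*}(\xx_n)$ is precisely $x_1^{\gamma_n}\cdots x_n^{\gamma_1} = \xx(S)^*$. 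So it remains only to verify that $\kappa_{\gamma(S)^*}(\xx_n) \in J_{\alpha,k}$. This is exactly the content of Theorem~\ref{demazure-identity}: the boxed consequence of that theorem says that $\kappa_{\gamma(S)^*}(\xx_n)$ is a $\ZZ[\xx_n]$-linear combination of $e_n(\xx_n), e_{n-1}(\xx_n), \dots, e_{n-k+1}(\xx_n)$, all of which are among the generators of $J_{\alpha,k}$. Hence $\kappa_{\gamma(S)^*}(\xx_n) \in J_{\alpha,k}$ and $\xx(S)^* = \initial_<(\kappa_{\gamma(S)^*}(\xx_n)) \in \initial_<(J_{\alpha,k})$.

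With both families shown to lie in $\initial_<(J_{\alpha,k})$, the final sentence follows immediately: a standard monomial of $S_{\alpha,k}$ (under $\neglex$) is by definition a monomial not in $\initial_<(J_{\alpha,k})$, hence in particular divisible by none of the reverse skip monomials $\xx(S)^*$ and by none of the variable powers $x_{\alpha_1+\cdots+\alpha_{i-1}+j}^{k-j+1}$ — which is exactly the defining condition for being $\alpha$-nonskip in Definition~\ref{nonskip-monomials}. Therefore the standard monomial basis of $S_{\alpha,k}$ is contained in $\MMM_{\alpha,k}$. The only real obstacle is the appeal to Theorem~\ref{demazure-identity}, but since that is quoted as a black box from \cite{HRS}, the argument here is essentially bookkeeping: matching the leading-term data of Demazure characters against the $\neglex$ order and unwinding the definitions. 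I would be mildly careful about the exact indexing conventions in the batch-variable powers, as that is the one place a sign or off-by-one could slip in.
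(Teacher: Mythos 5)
Your treatment of the first bullet and of the final deduction is exactly the paper's: quote Theorem~\ref{demazure-identity} to place $\kappa_{\gamma(S)^*}(\xx_n)$ in $J_{\alpha,k}$, read off its $\neglex$-leading monomial $\xx(S)^*$, and then unwind the definition of $\alpha$-nonskip. That part is fine.

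The second bullet, however, has a genuine gap that you notice but do not resolve. The $\neglex$-leading monomial of the generator $h_{k-j+1}(\xx_n^{(i)})$ is the pure power of the $\neglex$-\emph{largest} variable of the batch, i.e.\ $x_{\alpha_1+\cdots+\alpha_{i-1}+\alpha_i}^{\,k-j+1}$, for \emph{every} $j$. So as $j$ ranges over $1,\dots,\alpha_i$ the generators only hand you the powers $x_{\alpha_1+\cdots+\alpha_i}^{k},\dots,x_{\alpha_1+\cdots+\alpha_i}^{k-\alpha_i+1}$ of that one last variable; the claimed monomials $x_{\alpha_1+\cdots+\alpha_{i-1}+j}^{\,k-j+1}$ for $j<\alpha_i$ are not leading terms of any generator, and no re-reading of the indexing will make them so. What is missing is the construction of \emph{new} elements of $J_{\alpha,k}$: the truncated polynomials $h_{k-j+1}(x_{\alpha_1+\cdots+\alpha_{i-1}+1},\dots,x_{\alpha_1+\cdots+\alpha_{i-1}+j})$ in only the first $j$ variables of the batch. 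The paper obtains these by downward induction on $j$ from the identity
\begin{equation*}
h_d(y_1,\dots,y_{m-1}) = h_d(y_1,\dots,y_m) - y_m\, h_{d-1}(y_1,\dots,y_m),
\end{equation*}
which shows $h_d(y_1,\dots,y_j)\in J_{\alpha,k}$ for all $d\geq k-j+1$ once the full-batch generators are available. The truncated polynomial $h_{k-j+1}$ in the first $j$ batch variables then has $\neglex$-leading term exactly $x_{\alpha_1+\cdots+\alpha_{i-1}+j}^{\,k-j+1}$. You should add this inductive step; with it, the rest of your argument goes through as written.
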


\begin{proof}
Let $S \subseteq [n]$ satisfy $|S| = n-k+1$.
By Theorem~\ref{demazure-identity}, the Demazure character 
$\kappa_{\gamma(S)^*}(x_1, \dots, x_n)$ lies in the ideal $J_{\alpha,k}$.
Since the $\neglex$-leading term of 
$\kappa_{\gamma(S)^*}(x_1, \dots, x_n)$ is $\xx(S)^*$, we have the first bullet point.
For the second bullet point, the identity
\begin{equation}
h_d(y_1, \dots, y_m) = h_d(y_1, \dots, y_{m-1}) + y_m \cdot h_{d-1}(y_1, \dots, y_m), \quad m, d > 0
\end{equation}
may be used to deduce that 
\begin{equation}
h_{k-j+1}(x_{\alpha_1 + \cdots + \alpha_{i-1} + 1}, \dots, x_{\alpha_1 + \cdots + \alpha_{i-1} + j}) \in J_{\alpha,k}
\end{equation}
for each $1 \leq j \leq \alpha_i$.  
Taking the $\neglex$-leading term, we see that 
$x_{\alpha_1 + \cdots + \alpha_{i-1} + j}^{k - j + 1} \in \initial_<(J_{\alpha,k})$.
\end{proof}

Our next task is to prove that $|\MMM_{\alpha,k}| = |\OP_{\alpha,k}|$ for any $\alpha \in [k]^r$.
There are two main ingredients to our argument: a notion of {\em coinversion code}
due to the author and Wilson \cite{RW} and the unusual $\symm_n$-action of
Proposition~\ref{unusual-proposition}.
We first prove that the
restriction of this 
action to $\symm_{\alpha}$ preserves (the exponent sequences of) monomials in $\MMM_{\alpha,k}$.

\begin{lemma}
\label{unusual-preservation}
Let $\alpha \in [k]^r$ with $|\alpha| = n$.
Identify monomials $x_1^{a_1} \cdots x_n^{a_n}$ in $\QQ[\xx_n]$ with their exponent
sequences $(a_1, \dots, a_n)$.
The set of monomials
$\MMM_{\alpha,k}$ is stable under the action of the parabolic subgroup $\symm_{\alpha} \subseteq \symm_n$
given in Proposition~\ref{unusual-proposition}.
\end{lemma}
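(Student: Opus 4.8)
The plan is to reduce the claim to a statement about individual adjacent transpositions $s_\ell \in \symm_\alpha$ and then check, generator by generator of the defining conditions for $\MMM_{\alpha,k}$, that these conditions are preserved under the unusual action \eqref{unusual-word-action}. Since $\symm_\alpha$ is generated by the $s_\ell$ with $\ell$ and $\ell+1$ lying in a common batch, and since the unusual action of each $s_\ell$ is an involution on $\ZZ_{\geq 0}^n$ by Proposition~\ref{unusual-proposition}, it suffices to fix such an $\ell$, fix a monomial $m = x_1^{a_1} \cdots x_n^{a_n} \in \MMM_{\alpha,k}$, and show that $s_\ell \cdot m \in \MMM_{\alpha,k}$. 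Write $(a_1', \dots, a_n') $ for the exponent sequence of $s_\ell \cdot m$; only coordinates $\ell$ and $\ell+1$ change, and \eqref{unusual-word-action} gives either $(a_\ell', a_{\ell+1}') = (a_{\ell+1}, a_\ell - 1)$ (if $a_\ell > a_{\ell+1}$) or $(a_\ell', a_{\ell+1}') = (a_{\ell+1}+1, a_\ell)$ (if $a_\ell \leq a_{\ell+1}$).

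First I would handle the batch-wise variable-power conditions (second bullet of Definition~\ref{nonskip-monomials}). Because $\ell$ and $\ell+1$ are the $j$-th and $(j+1)$-st variables of the same $i$-th batch for some $j$, the relevant forbidden powers are $x_\ell^{k-j+1}$ and $x_{\ell+1}^{k-j}$, i.e. we are given $a_\ell \leq k-j$ and $a_{\ell+1} \leq k-j-1$, and we must show $a_\ell' \leq k-j$ and $a_{\ell+1}' \leq k-j-1$. In the case $a_\ell > a_{\ell+1}$: $a_\ell' = a_{\ell+1} \leq k-j-1 \leq k-j$ and $a_{\ell+1}' = a_\ell - 1 \leq k-j-1$. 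In the case $a_\ell \leq a_{\ell+1}$: $a_{\ell+1}' = a_\ell \leq k-j$... here one must be slightly careful, since we need $a_{\ell+1}' \leq k-j-1$; but $a_\ell \leq a_{\ell+1} \leq k-j-1$ gives it, and $a_\ell' = a_{\ell+1}+1 \leq k-j$ as required. For variables outside $\{\ell,\ell+1\}$ the exponents are unchanged and those conditions persist trivially; the only subtlety is that the forbidden-power exponent attached to a fixed variable position does not depend on $m$, so no bookkeeping across the swap is needed.

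The harder part is the reverse skip monomial condition (first bullet): for every $S \subseteq [n]$ with $|S| = n-k+1$ we have $\xx(S)^* \nmid m$, and we must deduce $\xx(S)^* \nmid s_\ell \cdot m$. Here the key structural input is that $\xx(S)^*$ has exponent sequence $\gamma(S)^* = (\gamma_n, \dots, \gamma_1)$ where, reading from position $1$ to $n$, the nonzero entries form a strictly increasing sequence of "skip values" determined by how $S$ is spread out. I would argue contrapositively: suppose $\xx(S)^* \mid s_\ell \cdot m$ for some such $S$; I want to produce an $S'$ (possibly $S' = S$) with $|S'| = n-k+1$ and $\xx(S')^* \mid m$. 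The interaction with the unusual swap at positions $\ell, \ell+1$ is exactly engineered so that divisibility by a reverse skip monomial transfers back: if the $S$-condition at positions $\ell, \ell+1$ is $(\gamma_{n-\ell+1}, \gamma_{n-\ell}) = (p, p')$ with $0 \le p$ and ($p' = 0$ or $p' > p$, by the skip structure), then in the case $a_\ell > a_{\ell+1}$ the hypothesis $p \leq a_\ell' = a_{\ell+1}$ and $p' \leq a_{\ell+1}' = a_\ell - 1$ forces $p' < a_\ell$ and $p \leq a_{\ell+1}$, so one checks $\xx(S)^* \mid m$ directly (same $S$); in the case $a_\ell \leq a_{\ell+1}$ one modifies $S$ at the two positions — swapping membership of $n-\ell+1$ and $n-\ell$ in $S$, which preserves $|S|$ and, crucially, preserves the skip-sequence structure because $\ell$ and $\ell+1$ are adjacent — to get the required $S'$. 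The main obstacle, and where I expect to spend the most care, is verifying that this local modification of $S$ at two adjacent positions does not disturb the global "increment-at-a-skip" shape of $\gamma(S')^*$, i.e. that $\gamma(S')^*$ is again a genuine reverse skip sequence; this is a small but fiddly combinatorial lemma about how $\gamma(S)$ changes when one moves a single element of $S$ past an adjacent non-element. Once that is in hand, the contrapositive is complete and the lemma follows.
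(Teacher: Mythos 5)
Your overall strategy coincides with the paper's: reduce to an adjacent transposition $s_\ell$ inside a batch, verify the variable--power condition by a direct two--case computation (your treatment of that part is correct), and handle the reverse skip condition contrapositively by locally modifying the set $S$ at the two indices of $S$ that control positions $\ell$ and $\ell+1$ of $\gamma(S)^*$, namely $n-\ell+1$ and $n-\ell$. However, your case analysis for the skip condition contains a step that fails. You key the decision of whether to modify $S$ to the comparison of $a_\ell$ with $a_{\ell+1}$ alone, and in particular you claim that when $a_\ell > a_{\ell+1}$ the \emph{same} $S$ witnesses $\xx(S)^* \mid m$. From $\gamma(S)^* \leq (a'_1,\dots,a'_n)$ you correctly get $p \leq a'_\ell = a_{\ell+1}$ and $p' \leq a'_{\ell+1} = a_\ell - 1$ (with $p,p'$ the entries of $\gamma(S)^*$ in positions $\ell,\ell+1$), but to conclude $\xx(S)^* \mid m$ you need $p' \leq a_{\ell+1}$, and $p' \leq a_\ell - 1$ does not imply this. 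Concretely, take $a_\ell = 5$, $a_{\ell+1} = 1$, so $(a'_\ell,a'_{\ell+1}) = (1,4)$; a set $S$ with $(p,p') = (0,3)$, i.e.\ $n-\ell \in S$ and $n-\ell+1 \notin S$, satisfies $\xx(S)^* \mid s_\ell\cdot m$ but $\xx(S)^* \nmid m$. In this situation one must replace $n-\ell$ by $n-\ell+1$ in $S$, which turns $(p,p') = (0,c)$ into $(c+1,0)$ and closes via $c+1 \leq a_\ell$.

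The correct dichotomy is governed jointly by the membership pattern of $\{n-\ell,\,n-\ell+1\}$ in $S$ and by the sign of $a_\ell - a_{\ell+1}$: writing $q = n-\ell$, one swaps $q$ for $q+1$ exactly when ($q\in S$, $q+1\notin S$, $a_\ell > a_{\ell+1}$), swaps $q+1$ for $q$ exactly when ($q+1\in S$, $q\notin S$, $a_\ell \leq a_{\ell+1}$), and otherwise keeps $S$; each resulting subcase then closes by a one--line inequality. Your prescription ``always swap when $a_\ell \leq a_{\ell+1}$'' has the mirror--image failure when $q\in S$ and $q+1\notin S$ (swapping would require $c+1 \leq a_\ell$ when you only know $c \leq a_\ell$), whereas keeping $S$ works there. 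A small additional inaccuracy: consecutive elements of $S$ produce \emph{equal} entries of $\gamma(S)$, so your structural claim ``$p'=0$ or $p' > p$'' is not right either, though that case is harmless. The ``fiddly lemma'' you defer --- that moving one element of $S$ from $q$ to $q+1$ (with $q+1\notin S$) changes $\gamma(S)$ only at positions $q,q+1$ and increments the nonzero value by exactly $1$ --- is genuinely easy and is not the gap; the gap is the misallocated case split.
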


\begin{proof}
Consider an index $p \in [n-1] - \{\alpha_1, \alpha_1 + \alpha_2, \dots \}$ corresponding to 
a typical adjacent transposition $s_p = (p, p+1)$ generating $\symm_{\alpha}$.
Suppose $m = x_1^{a_1} \cdots x_n^{a_n} \in \MMM_{\alpha,k}$ is $\alpha$-nonskip. We show
$s_p \cdot m \in \MMM_{\alpha,k}$ is  also $\alpha$-nonskip.
The application of $s_p \cdot ( - )$ sends $(a_p, a_{p+1})$ to $(a'_p, a'_{p+1})$ where
\begin{equation}
\begin{cases}
 (a'_p, a'_{p+1}) = (a_{p+1}, a_p - 1) & \text{if $a_p > a_{p+1}$,} \\
 (a'_p, a'_{p+1}) = (a_{p+1} + 1, a_p) &  \text{if $a_p \leq a_{p+1}$}
\end{cases}
\end{equation}
while leaving the remaining entries of $(a_1, \dots, a_n)$ unchanged.

Suppose $x_p$ lies in the $i^{th}$ batch of variables, so that 
$x_p = x_{\alpha_1 + \cdots + \alpha_{i-1} + j}$ for some $1 \leq j < \alpha_i$.
Since $m \in \MMM_{\alpha,k}$,
we have $a_p < k - j + 1$ and $a_{p+1} < k - j$.
Checking both possibilities $a_p > a_{p+1}$ and $a_p \leq a_{p+1}$, we see that
$a'_p < k - j + 1$ and $a'_{p+1} < k - j$.
This proves that the monomial $s_j \cdot m$ satisfies the second bullet point of 
Definition~\ref{nonskip-monomials}.

For the first bullet point of Definition~\ref{nonskip-monomials}, write 
$s_p \cdot (a_1, \dots, a_n) = (a'_1, \dots, a'_n)$
and suppose we have the componentwise inequality
$\gamma(S')^* \leq (a'_1, \dots, a'_n)$ for some subset $S' \subseteq [n]$ with $|S'| = n-k+1$.
Define a new subset $S \subseteq [n]$ by
\begin{equation}
S := \begin{cases}
(S' - \{p\}) \cup \{p+1\} & \text{if $p \in S'$ and $p+1 \notin S'$,} \\
(S' - \{p+1\}) \cup \{p\} & \text{if $p +1 \in S'$ and $p \notin S'$,} \\
S' & \text{otherwise.} \\
\end{cases}
\end{equation}
One checks the componentwise inequality $\gamma(S) \leq (a_1, \dots, a_n)$, which contradicts
the assumption that $m$ is $\alpha$-nonskip.
We conclude that $s_j \cdot m$ is $\alpha$-nonskip, as desired.
\end{proof}

We are ready to show the equality $|\MMM_{\alpha,k}| = |\OP_{\alpha,k}|$.  
Lemma~\ref{unusual-preservation} is a key tool in this argument.

\begin{lemma}
\label{cardinality-coincidence}
Let $\alpha \in [k]^r$ with $|\alpha| = n$.  We have $|\MMM_{\alpha,k}| = |\OP_{\alpha,k}|$.
\end{lemma}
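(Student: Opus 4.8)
The plan is to establish a bijection between $\MMM_{\alpha,k}$ and $\OP_{\alpha,k}$ by way of a well-chosen set of orbit representatives, exploiting the unusual $\symm_{\alpha}$-action from Proposition~\ref{unusual-proposition}. First I would recall that, under the unusual action, every $\symm_{\alpha}$-orbit on $\ZZ_{\geq 0}^n$ decomposes according to the orbits of each batch, and within the $i^{th}$ batch (on which $\symm_{\alpha_i}$ acts by the unusual rule) the orbit contains a unique representative whose coordinates are \emph{strictly decreasing}. By Lemma~\ref{unusual-preservation}, the set $\MMM_{\alpha,k}$ is a union of $\symm_{\alpha}$-orbits, so $|\MMM_{\alpha,k}|$ equals the number of $\alpha$-nonskip monomials whose exponent sequence is strictly decreasing within each batch. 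The goal then becomes to count these batchwise-strictly-decreasing $\alpha$-nonskip monomials and match them with $\OP_{\alpha,k}$.

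Next I would translate the two defining conditions of $\alpha$-nonskip (Definition~\ref{nonskip-monomials}) into combinatorial conditions on such a representative $m = x_1^{a_1}\cdots x_n^{a_n}$. The second bullet point says $a_{\alpha_1+\cdots+\alpha_{i-1}+j} \leq k-j$ for each $i$ and each $1 \leq j \leq \alpha_i$; combined with strict decrease within the batch this forces, batch by batch, the exponents to form a strictly decreasing sequence bounded appropriately, i.e.\ essentially a choice of $\alpha_i$ distinct values out of $\{0,1,\dots,k-1\}$ (with a shift). The first bullet point — no reverse skip monomial $\xx(S)^*$ with $|S| = n-k+1$ divides $m$ — is the global constraint that, reading the exponents of $m$ in the \emph{reverse} order and applying the skip/inversion-code machinery of \cite{RW}, rules out exactly the monomials that would correspond to an "ordered set partition of $[n]$ with fewer than $k$ blocks." I would phrase the coinversion code of \cite{RW} so that the batchwise-decreasing $\alpha$-nonskip monomials are precisely the coinversion codes of the ordered set partitions in $\OP_{\alpha,k}$: the batch condition encodes that the letters of each batch land in distinct blocks, and the global reverse-skip condition encodes that there are exactly $k$ blocks. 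This gives the bijection $\MMM_{\alpha,k}/\symm_\alpha \leftrightarrow \OP_{\alpha,k}/\symm_\alpha$, hence (since the unusual action is free by Proposition~\ref{unusual-proposition}, and the standard action on $\OP_{\alpha,k}$ is likewise free on $\symm_\alpha$) the desired equality of cardinalities.

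The main obstacle I anticipate is the bookkeeping in the first bullet point: verifying that "no reverse skip monomial of a size-$(n-k+1)$ set divides $m$" is \emph{equivalent} to "the coinversion code of $m$ is realized by an ordered set partition with exactly $k$ blocks whose batches are split." The forward direction (a code with too few blocks yields a dividing reverse skip monomial) is a direct unwinding of the definition of $\gamma(S)^*$; the reverse direction requires checking that a dividing reverse skip monomial can always be taken with $|S|$ exactly $n-k+1$ (not larger), which needs the observation that skip monomials for larger sets are divisible by ones for smaller sets of the critical size — a monotonicity statement about skip sequences that I would isolate as a small sublemma. Once that equivalence is in hand, the counting is a matter of matching the two free $\symm_\alpha$-set structures, and the lemma follows; I would also remark that this simultaneously recovers the classical count ($\binom{n}{\alpha}$ standard monomials for a product of flag varieties when $\alpha_1+\cdots+\alpha_r = k$) and the Stirling-number count of \cite{PR} when $\alpha = (1^n)$.
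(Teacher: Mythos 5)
Your strategy is the same as the paper's: pass to orbit representatives for the unusual $\symm_{\alpha}$-action (the batchwise strictly decreasing exponent sequences, which the paper calls $\overline{\MMM_{\alpha,k}}$), match them against $\OP_{\alpha,k}$ via the coinversion code of \cite{RW}, and conclude by freeness of both actions. One small slip first: ``$|\MMM_{\alpha,k}|$ equals the number of batchwise strictly decreasing $\alpha$-nonskip monomials'' is off by the factor $|\symm_{\alpha}|$ --- you mean the number of \emph{orbits} equals that count; since you correctly invoke freeness of both actions at the end, this is cosmetic.

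The substantive gap is in your central claim that ``the batchwise-decreasing $\alpha$-nonskip monomials are precisely the coinversion codes of the ordered set partitions in $\OP_{\alpha,k}$.'' Taken literally this is false: $\code$ is injective on $\OP_{n,k}$, so it would give $|\OP_{\alpha,k}| = |\MMM_{\alpha,k}|/|\symm_{\alpha}|$, the wrong count. What the lemma needs is that these monomials are the codes of a \emph{transversal} of the $\symm_{\alpha}$-orbits on $\OP_{\alpha,k}$, and that has two halves your plan does not supply: (i) the insertion algorithm inverse to $\code$ sends every batchwise-decreasing code into $\OP_{\alpha,k}$ (strictly decreasing code entries within a batch force the batch letters into distinct blocks --- this direction you do gesture at); and, crucially, (ii) \emph{every} $\symm_{\alpha}$-orbit of $\OP_{\alpha,k}$ contains \emph{exactly one} partition whose code is batchwise decreasing. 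Point (ii) is the heart of the paper's argument (the notion of $\alpha$-compatibility: at each stage of the insertion the new batch must enter blocks of strictly decreasing coinversion label, and each orbit admits a unique such rearrangement); without it you get only an injection of orbit sets, not a bijection. By contrast, the ``main obstacle'' you single out --- the equivalence between the reverse-skip divisibility condition and having exactly $k$ blocks --- is precisely \cite[Thm.~2.2]{RW} and can simply be cited, as the paper does; no new sublemma about skip monomials is required there.
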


\begin{proof}
Let $\sigma = (B_1 \mid \cdots \mid B_k) \in \OP_{n,k}$ be an ordered set partition of $[n]$
with $k$ blocks.
The {\em coinversion code} $\code(\sigma) = (c_1, \dots, c_n)$ of $\sigma$ is the length $n$ 
sequence whose $i^{th}$ term is
\begin{equation}
c_i = \begin{cases}
| \{ j' > j \,:\, \min(B_{j'}) > i \} | & \text{if $i = \min(B_j)$} \\
| \{ j' > j \,:\, \min(B_{j'}) > i \} |  + (j-1) & \text{if $i \in B_j$ and $i > \min(B_j)$.}
\end{cases}
\end{equation}

The author and Wilson prove \cite[Thm. 2.2]{RW} that the map $\sigma \mapsto \code(\sigma)$ 
is a bijection from $\OP_{n,k}$ to sequences $(c_1, \dots, c_n) \in [k]^n$ such that 
for any $S \subseteq [n]$ with $|S| = n-k+1$, the componentwise inequality
$\gamma(S)^* \leq (c_1, \dots, c_n)$ does {\em not} hold.

Identifying monomials with their exponent vectors, if 
the map $\sigma \mapsto \code(\sigma)$ restricted to give a 
bijection from $\OP_{\alpha,k}$ to $\MMM_{\alpha,k}$, this would prove the result. 
Unfortunately, this is not the case.
For example, when $\alpha = (2,2)$ and $k =3$ we have
$\code(3 \, 4 \mid 1 \mid 2) = (1,0,0,0)$.
Since $3$ and $4$ lie in the same block, $(3 \, 4 \mid 2 \mid 1) \notin \OP_{(2,2),3}$ but
$x_1^1 x_2^0 x_3^0 x_4^0 \in \MMM_{\alpha,k}$.

We combine coinversion codes and the action of $\symm_{\alpha}$ on $\MMM_{\alpha,k}$ 
coming from Lemma~\ref{unusual-preservation} to prove the result.
A length $n$ sequence $(c_1, \dots, c_n)$ is {\em $\alpha$-decreasing} if for $1 \leq i \leq r$
the $i^{th}$ batch of terms is strictly decreasing:
\begin{equation*}
c_{\alpha_1 + \cdots + \alpha_{i-1} + 1} > c_{\alpha_1 + \cdots + \alpha_{i-1} + 2} >
\cdots > c_{\alpha_1 + \cdots + \alpha_{i-1} + \alpha_i}.
\end{equation*}
For example, the sequence $(5,3,0,0,1)$ is $(3,1,1)$-decreasing but not $(1,3,1)$-decreasing or
$(1,1,3)$-decreasing.
We call  a monomial $x_1^{c_1} \cdots x_n^{c_n}$  
$\alpha$-decreasing if $(c_1, \dots c_n)$ is $\alpha$-decreasing.
Let $\overline{\MMM_{\alpha,k}} \subseteq \MMM_{\alpha,k}$ be the set of $\alpha$-decreasing
$\alpha$-nonskip monomials.

{\bf Claim:}
{\em The coinversion code bijection $\sigma \mapsto \code(\sigma)$ puts $\overline{\MMM_{\alpha,k}}$
in a one-to-one correspondence with a subset $\overline{\OP_{\alpha,k}}$ of $\OP_{\alpha,k}$.
Furthermore, every element of $\OP_{\alpha,k}$ is $\symm_{\alpha}$-conjugate to a unique element of 
$\overline{\OP_{\alpha,k}}$.}

Let us explain how the lemma follows from the claim. The action of $\symm_{\alpha}$ on 
$\OP_{\alpha,k}$ is free by the definition of $\OP_{\alpha,k}$. 
The action of $\symm_{\alpha}$ on $\MMM_{\alpha,k}$ coming from
Lemma~\ref{unusual-preservation} is free by Proposition~\ref{unusual-proposition}.
Also by Proposition~\ref{unusual-proposition}, every monomial in $\MMM_{\alpha,k}$
is $\symm_{\alpha}$-conjugate to a unique monomial in $\overline{\MMM_{\alpha,k}}$.
Given the claim, we can use these free actions and the bijection 
$\overline{\MMM_{\alpha,k}} \leftrightarrow \overline{\OP_{\alpha,k}}$ 
to uniquely construct a $\symm_{\alpha}$-equivariant
bijection between $\MMM_{\alpha,k} \leftrightarrow \OP_{\alpha,k}$.

Now we prove the claim. 
We need to understand the inverse of the coinversion word bijection $\sigma \mapsto \code(\sigma)$.
The inverse is constructed recursively by an insertion algorithm;
following the notation of \cite[Proof of Thm. 2.2]{RW}, we call this inverse map $\iota$.

Let 
$(B_1 \mid \cdots \mid B_k)$ be a sequence of
of $k$ possibly empty sets of positive integers. We define the {\em coinversion label}
of the sets $B_1, \dots, B_k$ by labeling the empty sets with $0, 1, \dots, j$ from right to left (where there are $j+1$
empty sets), and then labeling the nonempty sets with $j+1, j+2, \dots, k-1$ from left to right. 
An example of coinversion labels is as follows, displayed as superscripts:
\begin{equation*}
(  \varnothing^2 \mid 1 \,  3^3 \mid \varnothing^1 \mid 2 \,  5^4 \mid 4^5 \mid \varnothing^0).
\end{equation*}
By construction, each of the letters $0, 1, \dots, k-1$ appears exactly once as a coinversion label.

Let $(c_1, \dots, c_n) \in [k]^n$ satisfy $\gamma(S)^* \not\leq (c_1, \dots, c_n)$ for all $S \subseteq [n]$
with $|S| = n-k+1$.
We define $\iota(c_1, \dots, c_n) = (B_1 \mid \cdots \mid B_k)$ recursively by starting with the sequence 
$(\varnothing \mid \cdots \mid \varnothing)$ of $k$ copies of the empty set, 
and for $i = 1, 2, \dots, n$ 
inserting $i$ into the unique block with coinversion label $c_i$.

Let $n = 9, k = 4$, and  
$(c_1, \dots, c_9) = (2,0,3,1,0,0,2,1,0)$.
Observe that $(c_1, \dots, c_9)$ is $\alpha$-decreasing where $\alpha = (2,3,1,3)$.  We 
calculate $\iota(c_1, \dots, c_9)$ with the following table.
\begin{center}
\begin{tabular}{c | c | c}
$i$ & $c_i$ & $\sigma$  \\ \hline
$1$ & $2$ & $(\varnothing^3 \mid \varnothing^2 \mid \varnothing^1 \mid \varnothing^0 )$ \\
$2$ & $0$ & $(\varnothing^2 \mid 1^3 \mid \varnothing^1 \mid \varnothing^0 )$  \\
$3$ & $3$ & $(\varnothing^1 \mid 1^2 \mid \varnothing^0 \mid 2^3 )$ \\
$4$ & $1$ & $(\varnothing^1 \mid 1^2 \mid \varnothing^0 \mid 2 \, 3^3 )$ \\
$5$ & $0$ & $(4^1 \mid 1^2 \mid \varnothing^0 \mid 2 \, 3^3 )$ \\
$6$ & $0$ & $(4^0 \mid 1^1 \mid 5^2 \mid 2 \, 3^3 )$\\
$7$ & $2$ & $(4 \, 6^0 \mid 1^1 \mid 5^2 \mid 2 \, 3^3 )$ \\
$8$ & $1$ &  $(4 \, 6^0 \mid 1^1 \mid 5\, 7^2 \mid 2 \, 3^3 )$ \\
$9$ & $0$ & $(4 \, 6^0 \mid 1 \, 8^1 \mid 5\, 7^2 \mid 2 \, 3^3 )$ \\
\end{tabular}
\end{center}
We conclude  
$\iota(2,0,3,1,0,0,2,1,0) = (4 \, 6 \, 9 \mid 1 \, 8 \mid 5\, 7 \mid 2 \, 3 )$.
It is proven in \cite[Proof of Thm. 2.2]{RW} that $\iota(c_1, \dots, c_n)$ is always an ordered set partition
(i.e. that there are no empty blocks at the end of this procedure).
Observe that the output  $\iota(c_1, \dots, c_n)$ lies in $\OP_{\alpha,k}$.

Assume that the sequence $(c_1, \dots, c_n)$ is both $\alpha$-decreasing and $\alpha$-nonskip.
When the algorithm defining $\iota$ processes the $i^{th}$ batch of letters in $(c_1, \dots, c_n)$
(for some $1 \leq i \leq r$), the $\alpha$-decreasing condition makes the algorithm place 
the $i^{th}$ batch of numbers in $[n]$ in different 
blocks (see the above example); this shows that $\iota(c_1, \dots, c_n) \in \OP_{\alpha,k}$.  We  take
 $\overline{\OP_{\alpha,k}}$ to be the image of $\overline{\MMM_{\alpha,k}}$ under $\iota$.
 
 It remains to check that every ordered set partition in $\OP_{\alpha,k}$ is $\symm_{\alpha}$-conjugate
 to a unique element in $\overline{\OP_{\alpha,k}}$.  

 What do elements of $\overline{\OP_{\alpha,k}}$ look like?
 For any element $\sigma = (B_1 \mid \cdots \mid B_k) \in \OP_{\alpha,k}$, consider the sequence 
 $\sigma^{(0)}, \sigma^{(1)}, \dots, \sigma^{(r)}$ of lists
 $\sigma^{(i)} = (B_1^{(i)} \mid \cdots \mid B_k^{(i)})$ of possibly empty sets of positive integers 
 given by
 \begin{equation*}
 B_j^{(i)} := B_j \cap \{1, 2, \dots, \alpha_1 + \cdots  + \alpha_i \}.
 \end{equation*}
 In particular, we have $\sigma^{(0)} = (\varnothing \mid \cdots \mid \varnothing)$
 and $\sigma^{(r)} = \sigma$.
 The sequence $\sigma^{(i)}$ is obtained from $\sigma^{(i-1)}$ by adding the $i^{th}$ batch of letters in $[n]$.
 The coinversion labels give a total order on the blocks of $\sigma^{(i-1)}$.
 The partition $\sigma$ is {\em $\alpha, i$-compatible} (for $1 \leq i \leq r$) if the transformation
 $\sigma^{(i-1)} \leadsto \sigma^{(i)}$ adds these letters (from smallest to largest) into blocks of 
 strictly decreasing coinversion label with respect to $\sigma^{(i-1)}$.  The partition $\sigma$ is 
 {\em $\alpha$-compatible} if it is $\alpha,i$-compatible for each $1 \leq i \leq r$.

To see how these concepts work, 
let $\alpha = ( 2,3,1,3)$ (so that $r = 4$) and consider our example
ordered set partition
$\sigma = (4 \, 6 \, 9 \mid 1 \, 8 \mid 5\, 7 \mid 2 \, 3 ) \in \OP_{9,4}$.  The sequences
 $\sigma^{(0)}, \sigma^{(1)}, \dots, \sigma^{(4)}$ (together with the relevant coinversion labels) are
 \begin{align*}
 \sigma^{(0)} &=  (\varnothing^3 \mid \varnothing^2 \mid \varnothing^1 \mid \varnothing^0 ) \\
 \sigma^{(1)} &= (\varnothing^1 \mid 1^2 \mid \varnothing^0 \mid 2^3 ) \\
 \sigma^{(2)} &=  (4^0 \mid 1^1 \mid 5^2 \mid 2 \, 3^3 ) \\
 \sigma^{(3)} &= (4 \, 6^0 \mid 1^1 \mid 5^2 \mid 2 \, 3^3 ) \\
 \sigma^{(4)} &= (4 \, 6 \, 9 \mid 1 \, 8 \mid 5\, 7 \mid 2 \, 3 )
 \end{align*}
 We see that the newly added elements at each stage are added in strictly decreasing order
 with respect to coinversion label, so that $\sigma$ is $\alpha$-compatible.
  From the definition of $\iota$
 we see that $\overline{\OP_{\alpha,k}} = \iota(\overline{\MMM_{\alpha,k}})$ 
 is precisely the set of $\alpha$-compatible elements in $\OP_{\alpha,k}$.

 Each partition $\sigma \in \OP_{\alpha,k}$ is $\symm_{\alpha}$-conjugate to a unique $\alpha$-compatible 
 partition.
Therefore, each partition $\sigma \in \OP_{\alpha,k}$ is 
$\symm_{\alpha}$-conjugate to a unique element of  $\overline{\OP_{\alpha,k}}$.
This completes the proof of the claim.
\end{proof}

We are ready to describe the standard monomial basis 
and $\symm_{\alpha}$-module structure of $S_{\alpha,k}$.

\begin{theorem}
\label{s-structure-theorem}
Let $\alpha \in [k]^r$ with $|\alpha| = n$.  Endow monomials in $\QQ[\xx_n]$ with
the $\neglex$ term order.  The set $\MMM_{\alpha,k}$ of $\alpha$-nonskip monomials is 
the standard monomial basis of $S_{\alpha,k}$.  We have an isomorphism of ungraded 
$\symm_{\alpha}$-modules
\begin{equation}
S_{\alpha,k} \cong_{\symm_{\alpha}} \QQ[\OP_{\alpha,k}].
\end{equation}
\end{theorem}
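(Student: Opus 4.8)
The plan is to run a squeeze argument on $\dim_\QQ S_{\alpha,k}$, deducing both assertions of the theorem at once from the three lemmas just established. Concretely, I will produce a lower bound $\dim_\QQ S_{\alpha,k} \geq |\OP_{\alpha,k}|$ from the orbit-harmonics containment of Lemma~\ref{first-containment}, an upper bound $\dim_\QQ S_{\alpha,k} \leq |\MMM_{\alpha,k}|$ from Gr\"obner theory together with Lemma~\ref{standard-monomial-containment}, and then invoke Lemma~\ref{cardinality-coincidence} to see that these two bounds coincide. Equality then forces every inequality and every inclusion used along the way to be an identity, which is exactly what the theorem asserts.

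For the lower bound, Lemma~\ref{first-containment} gives $J_{\alpha,k} \subseteq \TT(Z_{\alpha,k})$, hence a surjection of graded $\QQ$-algebras $S_{\alpha,k} = \QQ[\xx_n]/J_{\alpha,k} \twoheadrightarrow \QQ[\xx_n]/\TT(Z_{\alpha,k})$; since $Z_{\alpha,k}$ is $\symm_\alpha$-stable, both ideals are $\symm_\alpha$-stable and this is a morphism of $\symm_\alpha$-modules. By the orbit-harmonics identifications \eqref{orbit-harmonics-isomorphisms} we have $\dim_\QQ \QQ[\xx_n]/\TT(Z_{\alpha,k}) = |Z_{\alpha,k}| = |\OP_{\alpha,k}|$, so $\dim_\QQ S_{\alpha,k} \geq |\OP_{\alpha,k}|$. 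For the upper bound, with respect to the $\neglex$ order the monomials of $\QQ[\xx_n]$ lying outside $\initial_<(J_{\alpha,k})$ — the standard monomials — descend to a $\QQ$-basis of $S_{\alpha,k}$, so $\dim_\QQ S_{\alpha,k}$ equals the number of standard monomials; by Lemma~\ref{standard-monomial-containment} every standard monomial is $\alpha$-nonskip, so this number is at most $|\MMM_{\alpha,k}|$. Lemma~\ref{cardinality-coincidence} then gives $|\MMM_{\alpha,k}| = |\OP_{\alpha,k}|$, and chaining yields $|\OP_{\alpha,k}| \leq \dim_\QQ S_{\alpha,k} \leq |\MMM_{\alpha,k}| = |\OP_{\alpha,k}|$, so equality holds throughout.

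It remains to extract the two conclusions. First, the inclusion of the set of standard monomials into $\MMM_{\alpha,k}$ is now an inclusion of finite sets of equal cardinality, hence an equality; this is the standard monomial basis statement. Second, the equality $\dim_\QQ S_{\alpha,k} = \dim_\QQ \QQ[\xx_n]/\TT(Z_{\alpha,k})$ forces the surjection above to be an isomorphism, so in fact $J_{\alpha,k} = \TT(Z_{\alpha,k})$; composing with the ungraded $\symm_\alpha$-module isomorphisms $\QQ[\xx_n]/\TT(Z_{\alpha,k}) \cong \QQ[\xx_n]/\II(Z_{\alpha,k}) \cong \QQ[Z_{\alpha,k}]$ from \eqref{orbit-harmonics-isomorphisms} and the identification $\QQ[Z_{\alpha,k}] = \QQ[\OP_{\alpha,k}]$ gives $S_{\alpha,k} \cong_{\symm_\alpha} \QQ[\OP_{\alpha,k}]$. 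No genuinely hard step remains — the combinatorial weight of the argument was already spent in Lemmas~\ref{standard-monomial-containment} and~\ref{cardinality-coincidence} — but one point deserves care: the $\symm_\alpha$-module structure asserted here is the one carried by $S_{\alpha,k}$ through linear substitution of variables (the action under which orbit harmonics is equivariant), whereas the unusual $\symm_n$-action of Proposition~\ref{unusual-proposition} enters only inside the proof of Lemma~\ref{cardinality-coincidence} as a bookkeeping device and plays no role in the final isomorphism.
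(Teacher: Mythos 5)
Your proof is correct and follows exactly the paper's argument: the same squeeze $\dim(S_{\alpha,k}) \leq |\MMM_{\alpha,k}| = |\OP_{\alpha,k}| \leq \dim(S_{\alpha,k})$ via Lemmas~\ref{standard-monomial-containment}, \ref{cardinality-coincidence}, and \ref{first-containment}, with the equality of dimensions then forcing $\MMM_{\alpha,k}$ to be the standard monomial basis and $J_{\alpha,k} = \TT(Z_{\alpha,k})$, whence the $\symm_{\alpha}$-isomorphism. Your closing remark distinguishing the genuine $\symm_{\alpha}$-action by variable substitution from the unusual action of Proposition~\ref{unusual-proposition} (which is only a counting device inside Lemma~\ref{cardinality-coincidence}) is a correct and worthwhile clarification.
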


\begin{proof}
We have the chain of inequalities
\begin{equation}
\dim(S_{\alpha,k}) \leq |\MMM_{\alpha,k}| = |\OP_{\alpha,k}| \leq \dim(S_{\alpha,k}).
\end{equation}
The first inequality is Lemma~\ref{standard-monomial-containment},
the equality is Lemma~\ref{cardinality-coincidence}, and the 
second inequality is Lemma~\ref{first-containment}.
We conclude that $\dim(S_{\alpha,k}) = |\MMM_{\alpha,k}| = |\OP_{\alpha,k}|$.

Lemma~\ref{standard-monomial-containment} implies that $\MMM_{\alpha,k}$
is the standard monomial basis of $S_{\alpha,k}$.  Lemma~\ref{first-containment} and the fact that 
$\QQ[x_1, \dots, x_n]/\TT(Z_{\alpha,k}) \cong \QQ[\OP_{\alpha,k}]$ as ungraded $\symm_{\alpha}$-modules
imply that 
$J_{\alpha,k} = \TT(Z_{\alpha,k})$, which yields the desired $\symm_{\alpha}$-module isomorphism.
\end{proof}

We can also write down a Gr\"obner basis for $J_{\alpha,k}$. 
We state this result together with a fact about integrality we will need for cohomology
computation. 

\begin{proposition}
\label{groebner-proposition}
Let $\alpha \in [k]^r$ with $|\alpha| = n$.
Endow monomials in $\QQ[\xx_n]$ with the $\neglex$ term order.  A Gr\"obner basis 
for $J_{\alpha,k}$ consists of the Demazure characters 
\begin{equation*}
\kappa_{\gamma(S)^*}(\xx_n), \quad S \subseteq [n], \, \, |S| = n-k+1
\end{equation*}
together with the complete homogeneous symmetric polynomials
\begin{equation*}
h_{k-j+1}(x_{\alpha_1 + \cdots + \alpha_{i-1} + 1}, \dots, x_{\alpha_1 + \cdots + \alpha_{i-1} + j}), \quad
1 \leq i \leq r, \, \, 1 \leq j \leq \alpha_i.
\end{equation*}

Every element in the Gr\"obner basis above is a polynomial in $\ZZ[\xx_n]$ with integer coefficients
and $\neglex$-leading coefficient $1$.  Furthermore, every polynomial in the Gr\"obner basis above 
lies in the ideal $I_{\alpha,k} \subseteq \ZZ[\xx_n]$.
\end{proposition}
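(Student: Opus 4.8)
The plan is to combine the ideal-membership facts already in hand with the dimension count supplied by Theorem~\ref{s-structure-theorem}. Write $G$ for the proposed collection of polynomials.

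First I would check that every element of $G$ lies in $I_{\alpha,k}$ (hence in $J_{\alpha,k}$), has integer coefficients, and has $\neglex$-leading coefficient $1$. For a Demazure character $\kappa_{\gamma(S)^*}(\xx_n)$ with $|S| = n-k+1$, Theorem~\ref{demazure-identity} (as highlighted in the discussion following it) expresses $\kappa_{\gamma(S)^*}(\xx_n)$ as a $\ZZ[\xx_n]$-linear combination of $e_n(\xx_n), e_{n-1}(\xx_n), \dots, e_{n-k+1}(\xx_n)$, each a generator of $I_{\alpha,k}$, so $\kappa_{\gamma(S)^*}(\xx_n) \in I_{\alpha,k}$; integrality of its coefficients and the fact that the coefficient of $x_1^{\gamma_n}\cdots x_n^{\gamma_1} = \xx(S)^*$ equals $1$ are property (2) of Demazure characters. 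For a truncated complete homogeneous polynomial $h_{k-j+1}(x_{\alpha_1+\cdots+\alpha_{i-1}+1}, \dots, x_{\alpha_1+\cdots+\alpha_{i-1}+j})$, the argument in the proof of Lemma~\ref{standard-monomial-containment} --- repeated use of the identity $h_d(y_1, \dots, y_m) = h_d(y_1, \dots, y_{m-1}) + y_m h_{d-1}(y_1, \dots, y_m)$, which has integer coefficients --- rewrites it as a $\ZZ[\xx_n]$-combination of the generators $h_{k-\alpha_i+1}(\xx_n^{(i)}), \dots, h_k(\xx_n^{(i)})$ of $I_{\alpha,k}$; and each $h_d$ has all coefficients equal to $1$.

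Next I would identify the $\neglex$-leading monomials. By property (1) of Demazure characters, $\initial_<(\kappa_{\gamma(S)^*}(\xx_n)) = \xx(S)^*$. Since $\neglex$ is lexicographic for the order $x_n > \cdots > x_1$, the largest variable occurring in $h_{k-j+1}(x_{\alpha_1+\cdots+\alpha_{i-1}+1}, \dots, x_{\alpha_1+\cdots+\alpha_{i-1}+j})$ is $x_{\alpha_1+\cdots+\alpha_{i-1}+j}$, and the $\neglex$-leading term of a complete homogeneous polynomial is the pure power of its largest variable; hence $\initial_<(h_{k-j+1}(\dots)) = x_{\alpha_1+\cdots+\alpha_{i-1}+j}^{k-j+1}$. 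In all cases the leading coefficient is $1$. Comparing the two resulting families of leading monomials with the two bullet points of Definition~\ref{nonskip-monomials}, the monomial ideal $L := \langle \initial_<(g) : g \in G\rangle$ is precisely the monomial ideal whose set of standard monomials (those lying outside $L$) is $\MMM_{\alpha,k}$.

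Finally, Theorem~\ref{s-structure-theorem} asserts that $\MMM_{\alpha,k}$ is the standard monomial basis of $S_{\alpha,k}$, i.e., the set of monomials outside $\initial_<(J_{\alpha,k})$ equals $\MMM_{\alpha,k}$. Thus $\initial_<(J_{\alpha,k})$ and $L$ are monomial ideals with the same complement, so $\initial_<(J_{\alpha,k}) = L = \langle \initial_<(g) : g \in G\rangle$; together with $G \subseteq J_{\alpha,k}$ this shows $G$ is a Gr\"obner basis of $J_{\alpha,k}$ (and in particular $\langle G\rangle = J_{\alpha,k}$, since then $\QQ[\xx_n]/\langle G \rangle$ and $S_{\alpha,k}$ have the same finite dimension $|\MMM_{\alpha,k}|$ while $\langle G\rangle \subseteq J_{\alpha,k}$). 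I do not expect a real obstacle here: all of the substantive input --- the Demazure identity, the cardinality coincidence $|\MMM_{\alpha,k}| = |\OP_{\alpha,k}|$, and the resulting dimension count --- is already established, and the only steps requiring care are the elementary leading-term computation for the truncated $h$-polynomials and the bookkeeping matching leading monomials to Definition~\ref{nonskip-monomials}.
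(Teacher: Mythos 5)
Your argument is correct and is essentially the paper's own proof, just spelled out in more detail: the paper's three-sentence proof cites the proof of Lemma~\ref{standard-monomial-containment} (for ideal membership and the leading-term computation), Theorem~\ref{demazure-identity} (for integrality and membership in $I_{\alpha,k}$), and Theorem~\ref{s-structure-theorem} (for identifying $\MMM_{\alpha,k}$ as the standard monomial basis), which is exactly the chain of reasoning you lay out. Your unpacking of the final step --- observing that the monomial ideal generated by the leading terms of $G$ and $\initial_<(J_{\alpha,k})$ have the same complement $\MMM_{\alpha,k}$ and are therefore equal --- is the intended content of the paper's terse closing sentence, and the leading-term computation for the truncated $h$-polynomials is correct under $\neglex$ since $x_n > \cdots > x_1$.
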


\begin{proof}
The proof of Lemma~\ref{standard-monomial-containment} shows that the polynomials in question lie
in $J_{\alpha,k}$ and Theorem~\ref{demazure-identity} guarantees 
 that they can be expressed as $\ZZ[\xx_n]$-linear combinations 
of the generators of $J_{\alpha,k}$.
The result follows from the identification of $\MMM_{\alpha,k}$ as the standard monomial basis of $S_{\alpha,k}$
in Theorem~\ref{s-structure-theorem}.
\end{proof}

\section{Mixed reduction}
\label{Mixed}
In this section we describe a linear algebra algorithm 
which will give an affine paving of the Grassmann product
$Gr(\alpha,k)$ which respects the inclusion
$X_{\alpha,k} \subseteq Gr(\alpha,k)$.
Since our algorithm will be a mixture of row and column reduction, we call it {\em mixed reduction}.
The results in this section are valid over any field $\FF$.

We model our moduli spaces using matrices. For any $\alpha \in [k]^r$ with $|\alpha| = n$, let 
$\VVV_{\alpha}$ be the family of $k \times n$  matrices given in block form by
\begin{equation}
\VVV_{\alpha} := \left\{
A = (A_1 \mid \cdots \mid A_r) \,:\,  \text{$A_i$ is a $k \times \alpha_i$ matrix and has full rank for 
$i = 1, 2, \dots, r$} 
\right\}.
\end{equation}
Let $\UUU_{\alpha} \subseteq \VVV_{\alpha}$ be the subfamily
\begin{equation}
\UUU_{\alpha} := \{ A \in \VVV_{\alpha} \,:\, \text{$A$ has full rank} \}
\end{equation}
of rank $k$ matrices in $\VVV_{\alpha}$.  Both of the sets $\VVV_{\alpha}$ and $\UUU_{\alpha}$
carry an action of the Levi subgroup $L_{\alpha} \subseteq GL_n$ on their columns. We have 
the identifications
\begin{equation}
\VVV_{\alpha}/L_{\alpha} = Gr(\alpha,k), \quad
\UUU_{\alpha}/L_{\alpha} = X_{\alpha,k}.
\end{equation}

\subsection{Permuted column reduced echelon form}
We review the classical decomposition of the Grassmannian $Gr(d,k)$ into Schubert cells.
Any point $V \in Gr(d,k)$ can be represented as the column space of a full rank $k \times d$ matrix $A$.
Applying the action of $GL_d$ on columns, we may obtain a unique such representation by putting $A$
into column reduced echelon form (CREF).  Given a $d$-element subset $I \subseteq [k]$, the 
{\em (open) Schubert cell} of $Gr(d,k)$ consists of all points $V \in Gr(d,k)$ represented by matrices 
whose CREF has pivot row set equal to $I$.
For $k = 4$ and $d = 2$, the Schubert cells are as follows, where the $\star$'s are arbitrary field elements.
\begin{scriptsize}
\begin{center}
\begin{tabular}{c c c c c c}
$\begin{pmatrix}
1 & 0 \\
0 & 1 \\
\star & \star \\
\star & \star 
\end{pmatrix}$ &
$\begin{pmatrix}
1 & 0 \\
\star & 0 \\
0 & 1 \\
\star & \star 
\end{pmatrix}$ & 
$\begin{pmatrix}
1 & 0 \\
\star & 0 \\
\star & 0 \\
0 & 1 
\end{pmatrix}$ &
$\begin{pmatrix}
0 & 0 \\
1 & 0 \\
0 & 1 \\
\star & \star
\end{pmatrix}$ &
$\begin{pmatrix}
0 & 0 \\
1 & 0 \\
\star & 0 \\
0 & 1 
\end{pmatrix}$ &
$\begin{pmatrix}
0 & 0 \\
0 & 0 \\
1 & 0 \\
0 & 1 
\end{pmatrix} $
 \\ & & & & & \\
$I = 12$ & $I = 13$ & $I = 14$ & $I = 23$ & $I = 24$ & $I = 34$
\end{tabular}
\end{center}
\end{scriptsize}

As $I$ varies over all $k$-element subsets of $[n]$, the  Schubert cells attached to $I$ induce an
affine paving of the Grassmannian $Gr(d,k)$.  Taking a product of such pavings gives a paving of
$Gr(\alpha,k)$.  Unfortunately, the product paving interacts poorly
with the inclusion $X_{\alpha,k} \subseteq Gr(\alpha,k)$: cells $C$ in the 
product paving can satisfy 
$C \not\subseteq X_{\alpha,k}$ and $C \cap X_{\alpha,k} \neq \varnothing$ simultaneously.
For this reason, we will need a different paving of $Gr(\alpha,k)$.
The first step in describing this new paving is a modification of the column reduction algorithm.

Let $A$ be a matrix with $k$ rows.
The usual column reduction algorithm prioritizes the rows of $A$ in the order $1, 2, \dots, k$.
If $\pi = \pi(1) \pi(2) \cdots \pi(k) \in \symm_k$ is any permutation, we could also
prioritize the rows 
in the order $\pi(1), \pi(2), \dots, \pi(k)$.  Let the output of this algorithm be the 
{\em $\pi$-column reduced echelon form ($\pi$-CREF)} of $A$. 
When $\pi = 12 \cdots k$ is the identity permutation, the $\pi$-CREF is the usual CREF.
We will only be interested in the case where the permutation $\pi$ has at most one descent.

To see an example of $\pi$-CREF, suppose
$k = 4$, $d = 2$, and $\pi = 2413 \in \symm_4$. 
The possible forms of the $\pi$-CREF of a full rank matrix, together with their pivot sets $I$, 
are
\begin{scriptsize}
\begin{center}
\begin{tabular}{cccccc}
$\begin{pmatrix}
\star & \star \\
1 & 0 \\
\star & \star \\
 0 & 1 
\end{pmatrix}$ &
$\begin{pmatrix}
0 & 1 \\
1 & 0 \\
\star & \star \\
\star &  0
\end{pmatrix}$ &
$\begin{pmatrix}
\star  & 0 \\
1 & 0 \\
0 & 1 \\
\star & 0
\end{pmatrix}$ &
$\begin{pmatrix}
0 & 1 \\
0 & 0 \\
\star  & \star \\
1 & 0
\end{pmatrix}$ &
$\begin{pmatrix}
\star & 0 \\
0 & 0 \\
0  & 1 \\
1 & 0 
\end{pmatrix}$ &
$\begin{pmatrix}
1 & 0 \\
0 & 0 \\
0 & 1 \\
0 & 0 
\end{pmatrix}$ \\
& & & & &  \\
$I = 24$ & $I = 12$ & $I = 23$ & $I = 14$ & $I = 34$ & $I = 13$
\end{tabular}
\end{center}
\end{scriptsize}

If $A$ is a $k \times d$ matrix for $d \leq k$ and $I \subseteq [k]$ is a subset of the rows of $A$,
let $\Delta_I(A)$ be the maximal minor of $A$ with row set $I$.
We record an important fact about the entries of a matrix in $\pi$-CREF.

\begin{lemma}
\label{minor-recovery}
Let $d \leq k$ and let $A$ be a full rank $k \times d$ matrix. Let $\pi \in \symm_k$ and let $B$ be the 
$\pi$-CREF of $A$.  Suppose that $B$ has pivots in row set $I \subseteq [k]$ (so that $|I| = d$).
For any position of $(r,s)$ of $B$ such that $r \notin I$, there exists a subset $J = J(r,s) \subseteq [k]$
such that $J$ differs from $I$ in exactly one element and 
\begin{center}
the $(r,s)$-entry of $B$ is the quotient of maximal
minors $\pm \Delta_J(A)/\Delta_I(A)$ of $A$,
\end{center}
where the sign depends  on $I, r, s,$ and $\pi$.
\end{lemma}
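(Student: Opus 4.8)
The plan is to reduce the statement to the classical fact that the entries of $A(A_I)^{-1}$ are ratios of maximal minors of $A$, via Cramer's rule. First I would record the structural input. Since $\pi$-CREF is produced from $A$ by column operations alone, we can write $B = AC$ for a unique invertible $d\times d$ matrix $C$. Let $A_I$, $B_I$ denote the $d\times d$ submatrices of $A$, $B$ on the rows of $I$, taken in increasing order. The defining (reduced, staircase) shape of $\pi$-CREF says that every pivot row of $B$ is a standard basis row vector, and that distinct pivot rows — being the pivots of distinct columns, and all $d$ columns are pivot columns since $B$ has full rank — give distinct basis vectors; hence $B_I$ is a $d\times d$ permutation matrix. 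Restricting $B = AC$ to the rows of $I$ gives $B_I = A_I C$, so $A_I$ is invertible (in particular $\Delta_I(A)\neq 0$) and $C = (A_I)^{-1}B_I$, whence $B = A(A_I)^{-1}B_I$.

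Next I would extract the individual entry. Fix a position $(r,s)$ of $B$ with $r\notin I$. Since $B_I$ permutes columns, there is an index $u = u(I,s,\pi)\in\{1,\dots,d\}$ with $B_{r,s} = \big(A(A_I)^{-1}\big)_{r,u} = A_r(A_I)^{-1}e_u$, where $A_r$ is the $r$-th row of $A$. Cofactor expansion along the substituted row (Cramer's rule) gives $A_r(A_I)^{-1}e_u = \det(M)/\det(A_I)$, where $M$ is obtained from $A_I$ by replacing its $u$-th row — which is row $i_u$ of $A$, writing $I = \{i_1<\cdots<i_d\}$ — with $A_r$. Thus $M$ is a $d\times d$ submatrix of $A$ on the row set $J := J(r,s) := (I\setminus\{i_u\})\cup\{r\}$, with rows listed in the order $i_1,\dots,i_{u-1},r,i_{u+1},\dots,i_d$. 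Because $r\notin I$ we get $|J| = d$ and $J$ differs from $I$ in exactly one element, and sorting the rows of $M$ into increasing order multiplies $\det M$ by a sign determined by $I$, $r$, $s$. Combining, $B_{r,s} = \det M/\det A_I = \pm\,\Delta_J(A)/\Delta_I(A)$, the sign depending only on $I$, $r$, $s$, and $\pi$.

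The only genuinely delicate point is the bookkeeping in the first step: deducing, from the precise definition of $\pi$-column reduced echelon form, that the pivot submatrix $B_I$ is a permutation matrix (equivalently, the identity once the rows are listed in $\pi$-priority order), and tracking how the index $u$ — i.e. which element $i_u$ of $I$ gets traded for $r$ — depends on the column $s$ and on $\pi$. Everything after that is Cramer's rule together with a sign-of-permutation computation; since the statement only asserts the existence of such a $J$ and leaves the sign unspecified, I would not carry the sign computation out in full detail.
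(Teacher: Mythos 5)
Your argument is correct, and it in fact supplies a complete proof of the statement that the paper itself only cites. The paper's proof is a two-sentence appeal to the classical fact that the entries of the (column reduced) echelon representative of a point of $Gr(d,k)$ are, up to sign and global scaling, Pl\"ucker coordinates, together with the remark that this persists when the rows are prioritized according to an arbitrary $\pi$. You instead derive this from scratch: writing $B = AC$ with $C \in GL_d$, observing that the pivot submatrix $B_I$ is a permutation matrix so that $B = A(A_I)^{-1}B_I$, and then applying Cramer's rule to identify $\bigl(A(A_I)^{-1}\bigr)_{r,u}$ with $\pm\Delta_J(A)/\Delta_I(A)$ for $J = (I\setminus\{i_u\})\cup\{r\}$. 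The underlying mathematics is the same in both cases, but your version is self-contained and makes explicit exactly where the dependence of $J$ and of the sign on $I$, $r$, $s$, and $\pi$ enters (namely through which pivot row $i_u$ serves column $s$, which is determined by the $\pi$-CREF structure, and through the sorting permutation for the row set $J$). The one point you flag as delicate --- that $B_I$ is a permutation matrix --- is indeed the only structural input needed from the definition of $\pi$-CREF, and your justification (each pivot row is a distinct standard basis vector and every column has a pivot) is adequate.
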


\begin{proof}
When $\pi$ is the identity permutation, this is the classical fact that the Pl\"ucker coordinates 
of any point in $Gr(d,k)$ contain (up to predictable signs and global scaling) the entries in the CREF of 
a $k \times d$ matrix representing that point.
This remains true if we order the columns of our matrices according to an arbitrary 
permutation $\pi$.
\end{proof}

\subsection{The mixed reduction algorithm}
Let $U \subseteq GL_k$ be the subgroup of {\em lower} triangular matrices with $1$'s on
 the diagonal.  Given a set sequence $\III = (I_1, \dots, I_r)$ in $[k]$,
 we define a subset $U(\III) \subseteq U$ according to the following sparsity pattern.
 
 \begin{defn}  
 Let $\III = (I_1, \dots, I_r)$ be a set
 sequence in $[k]$.  Let  $U(\III)$ consist of
 those matrices $(u_{i,j})_{1 \leq i,j \leq k} \in U$ such that for $1 \leq j < i \leq k$ we have 
 \begin{quote}
 $u_{i,j} = 0$ unless 
 there exists $1 \leq t \leq n$ such that $i \in I_t - (I_1 \cup \cdots \cup I_{t-1})$ and 
 $j \notin (I_1 \cup \cdots \cup I_{t-1} \cup I_t)$.
 \end{quote}
 \end{defn}

 When $k = 4$, two examples of $U(\III)$ -- one for a covering set sequence and one
 for a non-covering set sequence --  are as follows:
 \begin{scriptsize}
 \begin{equation*}
 U(24,34,3,14) = \begin{pmatrix}
 1 & 0 & 0 & 0 \\
0  & 1 & 0 & 0 \\
 0  & \star &  1 & 0 \\
0   & 0 & 0 &  1
 \end{pmatrix}  \quad
 U(14, 12, 1, 12) = \begin{pmatrix}
 1 & 0 & 0 & 0 \\
 \star & 1 & 0 & 0 \\
 \star & \star & 1 & 1 \\
 0 & 0 & 0 & 1
 \end{pmatrix}
 \end{equation*}
 \end{scriptsize}
The set $U(\III)$ is  a copy of affine space of dimension equal to the number
of $\star$'s.

Given a set sequence $\III = (I_1, \dots, I_r)$  in $[k]$, we also define a sequence 
of $r$ permutations in $\symm_k$.

\begin{defn}
Let $\III = (I_1, \dots, I_r)$ be set sequence in $[k]$.  Define $r$ permutations
$\pi^{(1)}_{\III}, \dots, \pi^{(r)}_{\III} \in \symm_k$ as follows.  The one-line notation $\pi^{(t)}_{\III}$ 
is obtained by listing the elements of $[k] - (I_1 \cup \cdots \cup I_{t-1})$ in increasing order,
then writing the elements of $I_1 \cup \cdots \cup I_{t-1}$ in increasing order.
We write $\pi^{(t)}$ instead of $\pi^{(t)}_{\III}$ when the set sequence is understood.
\end{defn}

If $\III = (24, 34, 3, 14)$ we get the permutation
sequence $\pi^{(1)} = 1234, \pi^{(2)} = 1324, \pi^{(3)} = 1234,$ and $\pi^{(4)} = 1234$.
For our other 
example $\III = (14,12,1,12)$ we get
the permutation sequence $\pi^{(1)} = 1234, \pi^{(2)} = 2314, \pi^{(3)} = 3124,$ and $\pi^{(4)} = 3124$.

Both the affine space $U(\III)$ and the permutation sequence
$\pi^{(1)}_{\III}, \dots, \pi^{(r)}_{\III}$ only depend on the initial unions of the sets in $\III$.

\begin{observation}
\label{jump-observation}
Let $\III = (I_1, \dots I_r)$ and $\III' = (I'_1, \dots, I'_r)$ be two set sequences in $[k]$ 
of the same type. Suppose that $I_1 \cup \cdots \cup I_t = I'_t \cup \cdots \cup I'_t$ for 
each $1 \leq t \leq r$. Then $U(\III) = U(\III')$ and
$\pi^{(t)}_{\III} = \pi^{(t)}_{\III'}$ for each $1 \leq t \leq r$.
\end{observation}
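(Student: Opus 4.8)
The plan is to observe that both the affine space $U(\III)$ and each permutation $\pi^{(t)}_{\III}$ are defined entirely in terms of the flag of unions $P_0 \subseteq P_1 \subseteq \cdots \subseteq P_r$, where $P_t := I_1 \cup \cdots \cup I_t$ (with $P_0 := \varnothing$), and that the hypothesis is precisely that this flag agrees with the analogous flag $P'_t := I'_1 \cup \cdots \cup I'_t$ attached to $\III'$.

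First I would dispatch the permutations. By definition the one-line notation of $\pi^{(t)}_{\III}$ is the concatenation of the increasing arrangement of $[k] - P_{t-1}$ with the increasing arrangement of $P_{t-1}$; this word depends only on the set $P_{t-1}$. Since $P_{t-1} = P'_{t-1}$ for every $t$ (both being $\varnothing$ when $t = 1$), we get $\pi^{(t)}_{\III} = \pi^{(t)}_{\III'}$ for all $t$.

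Next I would rewrite the sparsity condition cutting out $U(\III)$ inside $U$ so that it, too, refers only to the $P_t$. The condition on an entry $u_{i,j}$ with $j < i$ is that $u_{i,j}$ may be nonzero only if there is some $t$ with $i \in I_t - P_{t-1}$ and $j \notin P_{t-1} \cup I_t$. Because $P_t = P_{t-1} \cup I_t$, we have $I_t - P_{t-1} = P_t - P_{t-1}$ and $P_{t-1} \cup I_t = P_t$, so the condition becomes: there exists $t$ with $i \in P_t - P_{t-1}$ and $j \notin P_t$. This is manifestly determined by the chain $(P_t)_{t=0}^{r}$ (and, since the $P_t$ form a chain, at most one $t$ can satisfy $i \in P_t - P_{t-1}$, though we do not need this refinement). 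Invoking the hypothesis $P_t = P'_t$ for all $t$, the sparsity patterns defining $U(\III)$ and $U(\III')$ coincide, hence $U(\III) = U(\III')$.

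There is essentially no obstacle: the only point requiring care is the elementary bookkeeping that translates the phrase ``$i \in I_t - (I_1 \cup \cdots \cup I_{t-1})$ and $j \notin I_1 \cup \cdots \cup I_t$'' into a condition on the unions $P_t$, together with the remark that the ``same type'' hypothesis, while valid in all of our applications (where $\III,\III'$ both have type $\alpha$), is not actually used in the argument.
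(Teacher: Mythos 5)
Your proposal is correct and matches the paper's (implicit) justification: the paper states this as an Observation without proof, prefaced only by the remark that $U(\III)$ and the $\pi^{(t)}_{\III}$ depend only on the initial unions of the sets in $\III$, which is exactly what you verify by rewriting both definitions in terms of the chain $P_t = I_1 \cup \cdots \cup I_t$. Your side remark that the ``same type'' hypothesis is not actually needed is also accurate.
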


The unipotent group $U \subseteq GL_k$ and its subsets $U(\III)$ act on the rows of 
matrices in $\VVV_{\alpha}$ and
$\UUU_{\alpha}$.  Mixed reduction starts with a matrix $A \in \VVV_{\alpha}$ and outputs a set 
sequence $\III$ of type $\alpha$, a matrix $u \in U(\III)$,
and a matrix $B \in \VVV_{\alpha,k}$ fitting a certain sparsity pattern determined by $\III$
such that $A = uBg$ for some element $g \in L_{\alpha}$.
We describe the pattern satisfied by the output matrix $B$.

\begin{defn}
\label{pattern-definition}
Let $\III = (I_1, \dots, I_r)$ be a set sequence of type $\alpha \in [k]^r$ where $|\alpha| = n$.
Let $\pi^{(1)}, \dots, \pi^{(r)} \in \symm_k$ be the permutation sequence associated to $\III$.
The {\em pattern matrix} $\PM(\III)$ is a $k \times n$ matrix over the set $\{0, 1, \star \}$
with block form $\PM(\III) = (A_1 \mid \cdots \mid A_r)$ where $A_t$ is $k \times \alpha_t$.
For $1 \leq t \leq r$, the block $A_t$ is obtained as follows.

Let $C_t$ be the $\{0,1,\star\}$-matrix 
describing $\pi^{(t)}$-CREF with pivot set $I_t$.  
For any $j > i$ such that $i \in I_t - (I_1 \cup \cdots \cup I_{t-1})$ and
$j \in [k] - (I_1 \cup \cdots \cup I_{t-1} \cup I_t)$,
there will be a $\star$ in $C_t$ in row $j$ below the pivot in row $i$.
The block $A_t$ of $\PM(\III)$ is obtained from $C_t$ by replacing all such $\star$'s with $0$'s.
\end{defn}

We give the pattern matrices corresponding to our two example set sequences.
If $\III = (24, 34, 3, 14)$ 
the associated pattern matrix is
\begin{scriptsize}
\begin{equation*}
\PM(24, 34, 3, 13) = 
\begin{pmatrix}

\begin{matrix}
 0 & 0  \\
 1 &  0 \\
 0 & 0 \\
  0 & 1
\end{matrix} & \rvline &

\begin{matrix}
 0 & 0  \\
 \star & 0 \\
 1 & 0 \\
 0 & 1
\end{matrix} & \rvline &
 
\begin{matrix}
0 \\
0 \\
1 \\
\star
\end{matrix} & \rvline &

\begin{matrix}
 1 & 0  \\
 \star &  0 \\
 0 & 1 \\
  \star & \star
\end{matrix} 

\end{pmatrix} \\
\end{equation*}
\end{scriptsize}
The $(3,1)$-entry of the first block is $0$ instead of $\star$ because of the second paragraph
of Definition~\ref{pattern-definition}.
If $\III = (14,12,1,12)$ 
the associated pattern matrix is
\begin{scriptsize}
\begin{equation*}
\PM(14, 12, 1, 12) =
\begin{pmatrix}

\begin{matrix}
 1 & 0  \\
 0 &  0 \\
 0 & 0 \\
  0 & 1
\end{matrix} & \rvline &

\begin{matrix}
 0 & 1  \\
 1 & 0 \\
 0 & 0 \\
 \star & \star 
\end{matrix} & \rvline &
 
\begin{matrix}
1 \\
\star \\
0  \\
\star
\end{matrix} & \rvline &

\begin{matrix}
 1 & 0  \\
 0 &  1 \\
 0 & 0 \\
  \star & \star
\end{matrix} 

\end{pmatrix}
\end{equation*}
\end{scriptsize}
The $(2,1)$- and $(3,1)$-entries of the first block and the $(3,1)$-entry of the second block are $0$
instead of $\star$ because of the second paragraph of Definition~\ref{pattern-definition}.

A pattern matrix $\PM(\III)$ has a zero row if and only if the set sequence $\III$ fails to cover $[k]$.
For any set sequence $\III$, a matrix $B \in \VVV_{\alpha}$ is said to {\em fit the pattern} of $\III$
if $B$ can be obtained by replacing the $\star$'s in $\PM(\III)$ with field elements.

\begin{quote}
{\underline {\bf MIXED REDUCTION ALGORITHM}} \\
{\bf INPUT:} Integers $k, r > 0$, a length $r$ sequence $\alpha = (\alpha_1, \dots, \alpha_r) \in [k]^r$, and a matrix
$A \in \VVV_{\alpha}$ \\ 
{\bf OUTPUT:} A set sequence $\III = (I_1, \dots, I_r)$ in $[k]$ of type $\alpha$, an element 
$u \in U(\III)$, and a matrix $B \in \VVV_{\alpha}$ which fits the pattern of $\III$ such that  
$A = uBg$ for some element $g \in L_{\alpha}$ 
\begin{enumerate}
\item  Initialize a block matrix $B := (B_1 \mid \cdots \mid B_r)$ by 
$B_t := A_t$. Initialize $\III$ to be the empty set sequence. Initialize $u$ to be the $k \times k$ identity matrix.
\item For $t = 1, 2, \dots, r$ do the following,
\begin{enumerate}
\item Let $\pi^{(t)} = \pi^{(t)}(1) \dots \pi^{(t)}(k)$ 
be the permutation in $\symm_k$ obtained by writing the elements of 
$[k] - (I_1 \cup \cdots \cup I_{t-1})$ in increasing order, and then writing the elements of 
$I_1 \cup \cdots \cup I_{t-1}$ in increasing order.
\item Use column operations on $B_t$ to put $B_t$ into $\pi^{(t)}$-CREF. 
Let $I_t \subseteq [k]$ be the set of pivot rows 
obtained thereby. Append $I_t$ to the set sequence $\III$.  
\item  For any $j > i$ such that $i \in I_t - (I_1 \cup \cdots \cup I_{t-1})$ and 
$j \in [k] - (I_1 \cup \cdots \cup I_t)$, modify the $(j,i)$-entry of $u$ to clear the element of $B_t$ 
in row $j$ below the pivot in row $i$.
This performs a row operation
on the overall matrix $B$.
\end{enumerate}
\end{enumerate}
\end{quote}

We consider two examples of the mixed reduction algorithm.

\begin{example}
Take $k = 4$, $\alpha = (2,2,1,2)$, and start 
with the matrix
\begin{scriptsize}
\begin{equation*}
A = 
\begin{pmatrix}

\begin{matrix}
 0 & 0  \\
 1 &  0 \\
 -1 & 0 \\
  0 & 1
\end{matrix} & \rvline &

\begin{matrix}
 1 & 0  \\
 -1 & 0 \\
 2 & 0 \\
 0 & 1
\end{matrix} & \rvline &
 
\begin{matrix}
4 \\
-2 \\
8 \\
4 
\end{matrix} & \rvline &

\begin{matrix}
 1 & 1  \\
 2 &  2 \\
 -1 & 0 \\
  -1 & -1
\end{matrix} 

\end{pmatrix}.
\end{equation*}
\end{scriptsize}
We process the blocks of $A$ from left to right:
\begin{scriptsize}
$$
\begin{pmatrix}

\begin{matrix}
 0 & 0  \\
 1 &  0 \\
 -1 & 0 \\
  0 & 1
\end{matrix} & \rvline &

\begin{matrix}
 1 & 0  \\
 -1 & 0 \\
 2 & 0 \\
 0 & 1
\end{matrix} & \rvline &
 
\begin{matrix}
4 \\
-2 \\
8 \\
4 
\end{matrix} & \rvline &

\begin{matrix}
 1 & 1  \\
 2 &  2 \\
 -1 & 0 \\
  -1 & -1
\end{matrix} 
\end{pmatrix} \rightarrow
\begin{pmatrix}

\begin{matrix}
 0 & 0  \\
 1 &  0 \\
 0 & 0 \\
  0 & 1
\end{matrix} & \rvline &

\begin{matrix}
 1 & 0  \\
 -1 & 0 \\
 1 & 0 \\
 0 & 1
\end{matrix} & \rvline &
 
\begin{matrix}
4 \\
-2 \\
6 \\
4 
\end{matrix} & \rvline &

\begin{matrix}
 1 & 1  \\
 2 &  2 \\
 1 & 2 \\
  -1 & -1
\end{matrix} 
\end{pmatrix} \rightarrow
\begin{pmatrix}

\begin{matrix}
 0 & 0  \\
 1 &  0 \\
 0 & 0 \\
  0 & 1
\end{matrix} & \rvline &

\begin{matrix}
 1 & 0  \\
 -1 & 0 \\
 0 & 0 \\
 0 & 1
\end{matrix} & \rvline &
 
\begin{matrix}
4 \\
-2 \\
2 \\
4 
\end{matrix} & \rvline &

\begin{matrix}
 1 & 1  \\
 2 &  2 \\
 0 & 1 \\
  -1 & -1
\end{matrix} 
\end{pmatrix} \rightarrow
$$
$$
\begin{pmatrix}

\begin{matrix}
 0 & 0  \\
 1 &  0 \\
 0 & 0 \\
  0 & 1
\end{matrix} & \rvline &

\begin{matrix}
 1 & 0  \\
 -1 & 0 \\
 0 & 0 \\
 0 & 1
\end{matrix} & \rvline &
 
\begin{matrix}
2 \\
-1 \\
1 \\
2 
\end{matrix} & \rvline &

\begin{matrix}
 1 & 1  \\
 2 &  2 \\
 0 & 1 \\
  -1 & -1
\end{matrix} 
\end{pmatrix} \rightarrow
\begin{pmatrix}

\begin{matrix}
 0 & 0  \\
 1 &  0 \\
 0 & 0 \\
  0 & 1
\end{matrix} & \rvline &

\begin{matrix}
 1 & 0  \\
 -1 & 0 \\
 0 & 0 \\
 0 & 1
\end{matrix} & \rvline &
 
\begin{matrix}
2 \\
-1 \\
1 \\
2 
\end{matrix} & \rvline &

\begin{matrix}
 1 & 0  \\
 2 &  0 \\
 0 & 1 \\
  -1 & 2 
\end{matrix} 
\end{pmatrix}
$$
\end{scriptsize}
More explicitly, the steps are
\begin{enumerate}
\item The first block of $A$ is already in  
$\pi^{(1)}$-CREF where $\pi^{(1)} = 1234$.  We see that $I_1 = 24$.
\item We use  the pivot in row $2$ to cancel row $3$. 
\item The second block of $A$ is already in 
$\pi^{(2)}$-CREF where $\pi^{(2)} = 1324$. We see that $I_2 = 14$.
\item We use the pivot in row $1$ to cancel row $3$.
\item We find the $\pi^{(3)}$-CREF of the third block of $A$ where $\pi^{(3)} = 3124$. We see that $I_3 = 3$.
\item We find the $\pi^{(4)}$-CREF of the fourth block of $A$ where $\pi^{(4)} = 1234$. We see that $I_4 = 13$.
\item We use the  pivot in row $1$ to cancel rows $2$ and $4$. 
\end{enumerate}
The output set sequence $\III = (I_1, I_2, I_3, I_4)$ is $(24, 14, 3, 13)$.
The unipotent subset $U({\III})$ and the matrix $u \in U(\III)$ are
\begin{scriptsize}
\begin{equation*}
U({\III}) = \begin{pmatrix}
1 & 0 & 0 & 0 \\
0 & 1 & 0 & 0 \\
\star & \star & 1 & 0 \\
0 & 0 & 0 & 1 
\end{pmatrix} \quad
u = \begin{pmatrix}
1 & 0 & 0 & 0 \\
0 & 1 & 0 & 0 \\
1 & -1 & 1 & 0 \\
0 & 0 & 0 & 1 
\end{pmatrix}.
\end{equation*}
\end{scriptsize}
\end{example}

\begin{example}
Our second example applies mixed reduction to a matrix whose rank is not full.
Let 
\begin{scriptsize}
\begin{equation*}
A = 
\begin{pmatrix}

\begin{matrix}
 2 & 0  \\
 2 &  0 \\
 -2 & 0 \\
  1 & 1
\end{matrix} & \rvline &

\begin{matrix}
 0 & 1  \\
 1 & 1 \\
 1 & -1 \\
 2 & 2 
\end{matrix} & \rvline &
 
\begin{matrix}
-1 \\
0 \\
2  \\
-1
\end{matrix} & \rvline &

\begin{matrix}
  1 & 1 \\
  3 & 3 \\
  1  & 1 \\
  -1 & 0
\end{matrix} 
 \end{pmatrix}
\end{equation*}
\end{scriptsize}
Applying mixed reduction to $A$ yields the following.
\begin{scriptsize}
$$
\begin{pmatrix}

\begin{matrix}
 2 & 0  \\
 2 &  0 \\
 -2 & 0 \\
  1 & 1
\end{matrix} & \rvline &

\begin{matrix}
 0 & 1  \\
 1 & 1 \\
 1 & -1 \\
 2 & 2 
\end{matrix} & \rvline &
 
\begin{matrix}
-1 \\
0 \\
2  \\
-1
\end{matrix} & \rvline &

\begin{matrix}
  1 & 1 \\
  3 & 3 \\
  1  & 1 \\
  -1 & 0
\end{matrix} 
 \end{pmatrix} \rightarrow
\begin{pmatrix}

\begin{matrix}
 1 & 0  \\
 1 &  0 \\
 -1 & 0 \\
  0 & 1
\end{matrix} & \rvline &

\begin{matrix}
 0 & 1  \\
 1 & 1 \\
 1 & -1 \\
 2 & 2 
\end{matrix} & \rvline &
 
\begin{matrix}
-1 \\
0 \\
2  \\
-1
\end{matrix} & \rvline &

\begin{matrix}
  1 & 1 \\
  3 & 3 \\
  1  & 1 \\
  -1 & 0
\end{matrix} 
 \end{pmatrix} \rightarrow
\begin{pmatrix}

\begin{matrix}
 1 & 0  \\
 0 &  0 \\
 0 & 0 \\
  0 & 1
\end{matrix} & \rvline &

\begin{matrix}
 0 & 1  \\
 1 & 0 \\
 1 & 0 \\
 2 & 2 
\end{matrix} & \rvline &
 
\begin{matrix}
-1 \\
1 \\
1  \\
-1
\end{matrix} & \rvline &

\begin{matrix}
  1 & 1 \\
  2 & 2 \\
  2  & 2 \\
  -1 & 0
\end{matrix} 
 \end{pmatrix} \rightarrow
$$
$$
\begin{pmatrix}

\begin{matrix}
 1 & 0  \\
 0 &  0 \\
 0 & 0 \\
  0 & 1
\end{matrix} & \rvline &

\begin{matrix}
 0 & 1  \\
 1 & 0 \\
 0 & 0 \\
 2 & 2 
\end{matrix} & \rvline &
 
\begin{matrix}
-1 \\
1 \\
0  \\
-1
\end{matrix} & \rvline &

\begin{matrix}
  1 & 1 \\
  2 & 2 \\
  0  & 0 \\
  -1 & 0
\end{matrix} 
 \end{pmatrix}  \rightarrow
\begin{pmatrix}

\begin{matrix}
 1 & 0  \\
 0 &  0 \\
 0 & 0 \\
  0 & 1
\end{matrix} & \rvline &

\begin{matrix}
 0 & 1  \\
 1 & 0 \\
 0 & 0 \\
 2 & 2 
\end{matrix} & \rvline &
 
\begin{matrix}
1 \\
-1 \\
0  \\
1
\end{matrix} & \rvline &

\begin{matrix}
  1 & 1 \\
  2 & 2 \\
  0  & 0 \\
  -1 & 0
\end{matrix} 
 \end{pmatrix} \rightarrow
\begin{pmatrix}
\begin{matrix}
 1 & 0  \\
 0 &  0 \\
 0 & 0 \\
  0 & 1
\end{matrix} & \rvline &

\begin{matrix}
 0 & 1  \\
 1 & 0 \\
 0 & 0 \\
 2 & 2 
\end{matrix} & \rvline &
 
\begin{matrix}
1 \\
-1 \\
0  \\
1
\end{matrix} & \rvline &

\begin{matrix}
  1 & 0 \\
  2 & 0 \\
  0  & 0 \\
  0 & 1
\end{matrix} 
 \end{pmatrix} 
$$
\end{scriptsize}

More explicitly, the steps are
\begin{enumerate}
\item We find the $\pi^{(1)}$-CREF of the first block of $A$ where $\pi^{(1)} = 1234$. We see that $I_1 = 14$.
\item We use the pivot in row 1 to cancel rows 2 and 3. 
\item The second block of $A$ is already in $\pi^{(2)}$-CREF where $\pi^{(2)} = 2314$. 
We see that $I_2 = 12$.
\item We use the pivot in row 2 to cancel row 3. 
\item We find the $\pi^{(3)}$-CREF of the third block of $A$ where $\pi^{(3)} = 3124$. We see that $I_3 = 1$.
\item We find the $\pi^{(4)}$-CREF of the fourth block of $A$ where $\pi^{(4)} = 3124$. We see that $I_4 = 14$.
\end{enumerate}
The output set sequence $\III = (I_1, I_2, I_3, I_4)$ is $(14, 12, 1, 14)$. 
The unipotent subeset $U({\III})$ and the matrix $u \in U(\III)$ are
\begin{scriptsize}
\begin{equation*}
U({\III}) = \begin{pmatrix}
1 & 0 & 0 & 0 \\
\star & 1 & 0 & 0 \\
\star & \star & 1 & 0 \\
0 & 0 & 0 & 1 
\end{pmatrix} \quad
u = \begin{pmatrix}
1 & 0 & 0 & 0 \\
-1 & 1 & 0 & 0 \\
1 & 1 & 1 & 0 \\
0 & 0 & 0 & 1 
\end{pmatrix}.
\end{equation*}
\end{scriptsize}
\end{example}

Observe that once the mixed reduction algorithm processes the $i^{th}$ block of a matrix, that block
remains unchanged for the remainder of the algorithm: any row operations used for later blocks 
will involve two rows of block $i$, the higher of which is a row of zeroes.
The following linear algebra result is the foundation of our nonstandard affine paving of
$Gr(\alpha,k)$.

\begin{proposition}
\label{mixed-reduction}
Let $A \in \VVV_{\alpha}$.  There exists a unique set sequence $\III$ in $[k]$ of type $\alpha$,
a unique matrix $u \in U(\III)$, a unique matrix $B$ fitting the pattern of $\III$, and a unique Levi element
$g \in L_{\alpha}$ such that $A = uBg$.
We have $A \in \UUU_{\alpha}$ if and only if the set sequence $\III$ covers $[k]$.
\end{proposition}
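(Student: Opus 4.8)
The plan is to show that the Mixed Reduction Algorithm produces \emph{some} decomposition $A = uBg$ with the stated shape, and then argue that the data $(\III, u, B, g)$ are forced to be unique. For existence, I would simply run through the algorithm and track an invariant. The key observation (already noted in the text just before the proposition) is that once block $t$ has been processed into $\pi^{(t)}$-CREF and its sub-pivot entries cleared, that block is never altered again: the only later row operations are of the form ``add a multiple of row $i$ to row $j$'' with $i \in I_t \setminus (I_1 \cup \cdots \cup I_{t-1})$ a pivot row appearing in some block $s > t$, and in block $t$ itself row $j$ below that pivot is already zero (either because it was cleared, or because $j \notin [k]\setminus(I_1\cup\cdots\cup I_t)$ forces $j$ to be a pivot row of an earlier block, whose entry in column-block $t$ is zero since block $t$ was put in $\pi^{(t)}$-CREF with pivots avoiding $I_1\cup\cdots\cup I_{t-1}$). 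So at the end each block $B_t$ genuinely fits the pattern $\PM(\III)$, the accumulated row operations assemble into a single matrix $u \in U(\III)$ (the sparsity pattern of $U(\III)$ is \emph{designed} to record exactly which $(j,i)$-entries the algorithm is allowed to touch), and the column operations on each block $B_t$ assemble into a block-diagonal $g^{-1} \in L_\alpha$, giving $A = uBg$.

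\textbf{Uniqueness.} Suppose $A = uBg = u'B'g'$ with $\III,\III'$ the corresponding set sequences, $u\in U(\III)$, $u'\in U(\III')$, etc. I would argue by induction on $t$ that $I_1\cup\cdots\cup I_t = I'_1\cup\cdots\cup I'_t$ for all $t$, and simultaneously that the first $t$ blocks of $B$ and $B'$, and the first $t$ diagonal blocks of $g,g'$, and the relevant entries of $u,u'$, all agree. The engine here is Lemma~\ref{minor-recovery}: the pivot set $I_t$ of the $\pi^{(t)}$-CREF of block $t$ is determined by which maximal minors $\Delta_J(\cdot)$ of that block vanish, and by Lemma~\ref{minor-recovery} the $\star$-entries of block $t$ are ratios of such minors — quantities that depend only on the column space of block $t$, i.e.\ only on the $L_\alpha$-orbit, hence only on $A$. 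More precisely: multiplying $A$ on the left by $u^{-1}$ (lower unitriangular) and on the right by $g^{-1}$ (block diagonal) and comparing with the $u', g'$ version, the left factor $u^{-1}u'$ is lower unitriangular and $g^{-1}g' \in L_\alpha$; one shows by processing blocks left to right that the sparsity constraints defining $U(\III)$, $U(\III')$, and the pattern matrices force $u^{-1}u' = \mathrm{id}$ and $g^{-1}g' = \mathrm{id}$ block by block. The inductive step uses that once $I_1\cup\cdots\cup I_{t-1}$ is known, $\pi^{(t)}$ is determined (Observation~\ref{jump-observation}), so ``$\pi^{(t)}$-CREF of block $t$'' is an unambiguous normal form, and a column space has a unique representative in that form; hence $I_t = I'_t$ and the $t$-th blocks of $B$, $B'$ agree, which then pins down the column operations (the $t$-th block of $g$) and the row operations done using the new pivots (entries of $u$).

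\textbf{The covering criterion.} Finally, $A\in\UUU_\alpha$ iff $A$ has rank $k$; since $u\in GL_k$ and $g\in GL_n$, this holds iff $B$ has rank $k$. But $B$ fits the pattern $\PM(\III)$, whose nonzero rows are exactly the rows indexed by $I_1\cup\cdots\cup I_r = \bigcup_t I_t$ (each pivot of each block contributes a row with a leading $1$, and these pivot rows are linearly independent since they contain an identity submatrix, while the non-pivot-ever rows of $\PM(\III)$ are identically zero by construction). So $\mathrm{rank}(B) = |I_1\cup\cdots\cup I_r| = k$ iff $\III$ covers $[k]$.

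\textbf{Main obstacle.} The existence half and the covering criterion are essentially bookkeeping. The genuinely delicate part is the uniqueness argument — in particular, verifying that the sparsity pattern of $U(\III)$ (and the placement of forced $0$'s in $\PM(\III)$, the ``second paragraph of Definition~\ref{pattern-definition}'' phenomenon) is \emph{tight}: that it records neither too many nor too few free entries, so that no nontrivial $u^{-1}u' \in U(\III)\cdot U(\III')^{-1}$ can fix a pattern matrix while permuting a $B$ into a $B'$. I would handle this by a careful block-by-block comparison, at each stage using that the already-processed lower-left rows are zero to show the remaining freedom is exhausted, invoking Lemma~\ref{minor-recovery} to identify the surviving $\star$-entries with $L_\alpha$-invariant ratios of minors.
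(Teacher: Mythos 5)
Your proposal is correct and takes essentially the same route as the paper's (much terser, four-sentence) proof: existence by running the mixed reduction algorithm, uniqueness block-by-block from the uniqueness of $\pi^{(t)}$-CREF representatives of column spaces (with $g$ pinned down by the full rank of the blocks), and the covering criterion from the observation that a pattern matrix has rank $k$ exactly when $\III$ covers $[k]$. The extra detail you supply (the induction on $t$ via Observation~\ref{jump-observation} and the appeal to Lemma~\ref{minor-recovery}) is a faithful elaboration of what the paper leaves implicit, not a different argument.
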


\begin{proof}
Existence comes from applying the mixed reduction algorithm to the matrix $A$.
Uniqueness of $g$ comes from the fact that the blocks $A = (A_1 \mid \cdots \mid A_r)$ of $A$ 
have full rank. Uniqueness of $u, B,$ and $\III$ follow from uniqueness of the $\pi$-CREF for any permutation
$\pi \in \symm_k$.  The last sentence is true because $B$ is nonsingular
if and only if $\III$ covers $[k]$.
\end{proof}

\subsection{Geometric consequences of mixed reduction}
Given a set sequence $\III$, 
we define two subsets $C_{\III}$ and $D_{\III}$ of the Grassmann product as follows.

\begin{defn}  Let $\alpha \in [k]^r$ and
let $\III$ be a set sequence in $[k]$ of type $\alpha$.
Define two subsets $\widehat{D_{\III}} \subseteq \widehat{C_{\III}} \subseteq\VVV_{\alpha}$ by
\begin{align}
\widehat{C_{\III}} &:= \{ u B \,:\, u \in U(\III) \text{ and $B$ fits the pattern of $\III$} \}. \\
\widehat{D_{\III}} &:= \{ B \,:\, \text{ $B$ fits the pattern of $\III$} \}.
\end{align}
Define $D_{\III} \subseteq C_{\III} \subseteq Gr(\alpha,k)$ by
\begin{align}
C_{\III}  &:= \text{image of $\widehat{C_{\III}}$ in $Gr(\alpha,k)$.} \\
D_{\III}  &:= \text{image of $\widehat{D_{\III}}$ in $Gr(\alpha,k)$.}
\end{align}
\end{defn}

The sets $C_{\III}$ and $D_{\III}$ are related by $C_{\III} = U(\III)  D_{\III}$.
A more precise relationship is as follows.

\begin{lemma}
\label{c-are-cells}
Let $\alpha \in [k]^r$ and let $\III = (I_1, \dots, I_r)$ be a set sequence in $[k]$ of type $\alpha$.  
\begin{enumerate}
\item  The subset $D_{\III} \subseteq Gr(\alpha,k)$ is isomorphic to affine space of 
dimension equal to the number of $\star$'s in the pattern matrix $\PM(\III)$.
\item Matrix multiplication gives an isomorphism of varieties 
$\widehat{C_{\III}} L_{\alpha} \cong U(\III) \times \widehat{D_{\III}} L_{\alpha}$.
\item  Matrix multiplication induces an isomorphism of varieties $C_{\III} \cong U(\III) \times D_{\III}$.
\item The subset $C_{\III} \subseteq Gr(\alpha,k)$ is isomorphic to affine space of dimension
equal to the number of $\star$'s in the pattern matrix $\PM(\III)$ plus the
dimension of the affine space $U(\III)$.
\end{enumerate}
\end{lemma}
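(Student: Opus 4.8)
The plan is to establish the four claims in order, with (2) doing the real work and (1), (3), (4) following formally.

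\textbf{Claim (1).} By construction $\widehat{D_{\III}}$ is the affine space obtained from $\PM(\III)$ by letting each $\star$ vary freely over $\FF$; write $N$ for the number of $\star$'s, so that $\widehat{D_{\III}} \cong \mathbb{A}^N$ with the $\star$-entries as coordinates. First I would check that the projection $\VVV_{\alpha} \twoheadrightarrow Gr(\alpha,k)$ restricts to a bijection $\widehat{D_{\III}} \to D_{\III}$: if $B$ and $B'$ fit the pattern of $\III$ and $B = B'g$ for some $g = g_1 \oplus \cdots \oplus g_r \in L_{\alpha}$, then $B_t = B'_t g_t$ blockwise with $B_t, B'_t$ both in $\pi^{(t)}$-CREF (zeroing out non-pivot entries of a matrix in $\pi$-CREF keeps it in $\pi$-CREF), so uniqueness of $\pi^{(t)}$-CREF forces each $g_t$ to be the identity and $B = B'$. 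The inverse $D_{\III} \to \widehat{D_{\III}}$ is a morphism by Lemma~\ref{minor-recovery}: each $\star$-entry of $B$ sits in a non-pivot position of the $\pi^{(t)}$-CREF of the $t$-th block, hence equals $\pm\Delta_J/\Delta_{I_t}$ of that block, a regular function wherever $\Delta_{I_t}\neq 0$, in particular on $D_{\III}$. So the bijection is an isomorphism of varieties and $D_{\III}\cong\mathbb{A}^N$.

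\textbf{Claim (2).} Matrix multiplication is a morphism $\mu\colon U(\III)\times \widehat{D_{\III}}L_{\alpha} \to \widehat{C_{\III}}L_{\alpha}$, $(u,Y)\mapsto uY$; it is surjective by definition of $\widehat{C_{\III}}L_\alpha$ and injective by the uniqueness in Proposition~\ref{mixed-reduction}, since every element of $\widehat{C_{\III}}L_\alpha$ lies in $\VVV_\alpha$ and so has a unique factorization $uBg$ with $u\in U(\III)$, $B$ fitting $\PM(\III)$, $g\in L_\alpha$. The substance is that $\mu^{-1}$ is regular, i.e. that $X=uBg\mapsto u$ depends algebraically on $X$. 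For this I would run the mixed reduction algorithm on $X$ symbolically: by Proposition~\ref{mixed-reduction} it produces $(\III,u,B,g)$, and each of its steps is a $\pi^{(t)}$-CREF computation followed by row operations clearing the entries below the new pivots; by Lemma~\ref{minor-recovery} every matrix entry created along the way is a ratio of maximal minors whose denominators are the minors $\Delta_{I_t}(\cdot)$, which are nonvanishing on $\widehat{C_{\III}}L_\alpha$ by construction of this stratum. Hence $u=u(X)$, and then $u(X)^{-1}X\in\widehat{D_{\III}}L_\alpha$, depend regularly on $X$; the same analysis identifies $\widehat{C_{\III}}L_\alpha$ and $\widehat{D_{\III}}L_\alpha$ as locally closed subvarieties of $\VVV_\alpha$, cut out by the relevant minor (non)vanishing conditions. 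So $\mu$ is an isomorphism of varieties. This verification of regularity, together with the bookkeeping needed to treat $\widehat{C_{\III}}L_\alpha$ as an honest variety, is the step I expect to be the main obstacle; everything else is formal.

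\textbf{Claims (3) and (4).} Row operations and column operations commute, so the left $U(\III)$-action and the right $L_\alpha$-action commute and $\mu$ is $L_\alpha$-equivariant (trivially on the $U(\III)$-factor, by right multiplication on $\widehat{D_{\III}}L_\alpha$). Since $Gr(\alpha,k)=\VVV_\alpha/L_\alpha$ is a geometric quotient and $\widehat{C_{\III}}L_\alpha$, $\widehat{D_{\III}}L_\alpha$ are $L_\alpha$-stable with quotients $C_{\III}$, $D_{\III}$ (their images in $Gr(\alpha,k)$ equal those of $\widehat{C_{\III}}$, $\widehat{D_{\III}}$ because multiplying by $L_\alpha$ does not alter $L_\alpha$-orbits), passing to $L_\alpha$-quotients turns the isomorphism of Claim (2) into an isomorphism $C_{\III}\cong U(\III)\times D_{\III}$ induced by multiplication, which is (3). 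Combining with (1) gives $C_{\III}\cong U(\III)\times\mathbb{A}^N\cong\mathbb{A}^{\dim U(\III)+N}$, where $N$ is the number of $\star$'s in $\PM(\III)$, which is (4).
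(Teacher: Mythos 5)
Your proposal is correct and follows essentially the same route as the paper: a regular bijection $\widehat{D_{\III}}\to D_{\III}$ whose inverse is regular by Lemma~\ref{minor-recovery}, bijectivity of the multiplication map from the uniqueness in Proposition~\ref{mixed-reduction}, and regularity of its inverse by running mixed reduction symbolically and observing that all entries produced are ratios of maximal minors with nonvanishing denominators. The extra detail you supply in (1) and in the quotient argument for (3)--(4) is consistent with what the paper leaves implicit.
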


\begin{proof}
(1) It is clear that $\widehat{D_{\III}}$ is a copy of affine space of the desired dimension. 
By Proposition~\ref{mixed-reduction}, the canonical projection
$\widehat{D_{\III}} \rightarrow D_{\III}$ is a regular bijection. We need only check that the 
inverse function $D_{\III} \rightarrow \widehat{D_{\III}}$ is regular.

The regularity of $D_{\III} \rightarrow \widehat{D_{\III}}$ is a consequence of 
Lemma~\ref{minor-recovery}. Let $\pi^{(1)}, \dots, \pi^{(r)} \in \symm_k$ be the permutation sequence
corresponding to $\III$ and let $A = (A_1 \mid \cdots \mid A_r) \in \widehat{D_{\III}}$.
For each $1 \leq t \leq r$, the matrix $A_t$ is in $\pi^{(t)}$-CREF with pivot row set $I_t$.
The rows of $A$ in $I_t$ are determined by $\pi^{(t)}$.  Lemma~\ref{minor-recovery} says that 
the other entries of $A$ are images under regular functions on $D_{\III}$
of the form $\pm \Delta_{J}^{(t)}/\Delta_{I_t}^{(t)}$, where $|J| = |I_t|$, the sets $J$ and $I_t$ differ in
a single element, 
the sign is deterministic,
and the superscript indicates that we are taking Pl\"ucker coordinates in the $t^{th}$
factor of $D_{\III}$.

(2) Matrix multiplication gives a regular map 
$\mu: U(\III) \times \widehat{D_{\III}} L_{\alpha} \rightarrow \widehat{C_{\III}} L_{\alpha}$.
Proposition~\ref{mixed-reduction} tells us that $\mu$ is a bijection. We need only show that the 
inverse map $\mu^{-1}:  \widehat{C_{\III}} L_{\alpha} \rightarrow U(\III) \times \widehat{D_{\III}} L_{\alpha}$ 
is regular.

Let $u \in U(\III)$ and let $A = (A_1 \mid \cdots \mid A_r) \in \widehat{D_{\III}} L_{\alpha}$ so that
$uA = B = (B_1 \mid \cdots \mid B_r) \in \widehat{C_{\III}} L_{\alpha}$.
We need to show that the entries of $u$ and $A$ are regular functions of the entries of $B$.

Consider applying mixed reduction to the matrix $B$ to get a matrix pair $u' \in U(\III)$ and
$A' \in \widehat{D_{\III}}$.  To process the first block, we find the unique 
$g_1 \in GL_{\alpha_1}$ such that $B_1 g_1$ is in $\pi^{(1)}$-CREF.  The matrix $g_1$ is a regular 
function of the entries of $B_1$ (the rows of $g_1^{-1}$ are the maximal submatrix of $B_1$ with
row set $I_1$). Next, we fill in the nonzero entries in columns $I_1$ of $u'$ 
with entries of $B_1 g_1$ (which are regular functions of $B$) and apply the corresponding 
row operations to $B$ to get a matrix $B' = (A'_1 \mid B'_2 \mid \cdots \mid B'_r)$ (whose entries 
are regular functions of  $B$) and whose first block coincides with the first block of $A'$.
Iterating, we get a matrix $g = g_1 \oplus \cdots \oplus g_t$ in $L_{\alpha}$ such that $B = u'A'g$, 
where each of $u',A,$ and $g$ are regular functions 
of  $B$.  

Since $B = u'A'g = uA$ and $A \in \widehat{D_{\III}} L_{\alpha}$, 
Proposition~\ref{mixed-reduction} tells us that 
$u = u'$ and $A'g = A$.  It follows that the entries of $u$ are regular functions of $B$.
Since the entries of both $A'$ and $g$ are regular functions of $B$, so are the entries of $A = A'g$.

(3) and (4) both follow immediately from (2).
\end{proof}

The cells $C_{\III}$ of Lemma~\ref{c-are-cells} will induce our affine paving of $Gr(\alpha,k)$ 
which interacts nicely with the inclusion $X_{\alpha,k} \subseteq Gr(\alpha,k)$.  The 
set-theoretic result in this direction is as follows.

\begin{lemma}
\label{set-theoretic-stratification}  
Let $\alpha \in [k]^r$.
We have a disjoint union decomposition
\begin{equation}
Gr(\alpha,k) = \bigsqcup_{\III} C_{\III},
\end{equation}
where $\III$ ranges over the collection of all set sequences 
$(I_1, \dots, I_r)$ of type $\alpha$ in $[k]$.  For a given
 set sequence $\III$, we have $C_{\III} \subseteq X_{\alpha,k}$ if $\III$ covers $[k]$ and 
 $C_{\III} \cap X_{\alpha,k} = \varnothing$ otherwise. 
\end{lemma}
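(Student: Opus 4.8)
The plan is to deduce the statement directly from Proposition~\ref{mixed-reduction} together with the identifications $\VVV_{\alpha}/L_{\alpha} = Gr(\alpha,k)$ and $\UUU_{\alpha}/L_{\alpha} = X_{\alpha,k}$. The one observation that makes the argument run is this: if $u \in U(\III)$ and $B$ fits the pattern of $\III$, then the quadruple $(\III, u, B, \mathrm{id})$ satisfies all the hypotheses of Proposition~\ref{mixed-reduction} for the matrix $uB \in \VVV_{\alpha}$, and since that data is unique, a matrix in $\VVV_{\alpha}$ lies in $\widehat{C_{\III}}$ precisely when the set sequence produced by mixed reduction is $\III$.

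For the covering statement, take $V \in Gr(\alpha,k)$ and choose any representative $A \in \VVV_{\alpha}$. Proposition~\ref{mixed-reduction} writes $A = uBg$ with $u \in U(\III)$, $B$ fitting the pattern of $\III$, and $g \in L_{\alpha}$. Since $L_{\alpha}$ acts on columns, the matrices $A$ and $uB = Ag^{-1}$ have the same image $V$ in $Gr(\alpha,k)$; as $uB \in \widehat{C_{\III}}$, this gives $V \in C_{\III}$, so $Gr(\alpha,k) = \bigcup_{\III} C_{\III}$.

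For disjointness, suppose $V \in C_{\III} \cap C_{\III'}$. Then $V$ is the image of some $uB \in \widehat{C_{\III}}$ and also of some $u'B' \in \widehat{C_{\III'}}$, and since these lie in the same $L_{\alpha}$-orbit we have $uB = u'B'g$ for some $g \in L_{\alpha}$. Now $(\III, u, B, \mathrm{id})$ and $(\III', u', B', g)$ are both valid mixed-reduction data for the single matrix $uB \in \VVV_{\alpha}$, so the uniqueness in Proposition~\ref{mixed-reduction} forces $\III = \III'$ (and in fact $u = u'$, $B = B'$, $g = \mathrm{id}$).

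Finally, fix $\III$ and let $V \in C_{\III}$ with representative $A = uB \in \widehat{C_{\III}}$. By the observation above, the set sequence produced by mixed reduction on $A$ is $\III$, so the last sentence of Proposition~\ref{mixed-reduction} says $A \in \UUU_{\alpha}$ if and only if $\III$ covers $[k]$. Since every element of $L_{\alpha} \subseteq GL_n$ preserves the column space, hence the rank, of a $k \times n$ matrix, membership in $\UUU_{\alpha}$ is constant on each $L_{\alpha}$-orbit; therefore $V \in X_{\alpha,k} = \UUU_{\alpha}/L_{\alpha}$ iff $\III$ covers $[k]$. This yields $C_{\III} \subseteq X_{\alpha,k}$ when $\III$ covers $[k]$ and $C_{\III} \cap X_{\alpha,k} = \varnothing$ otherwise. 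I do not expect a serious obstacle here: the only care needed is in keeping the $L_{\alpha}$-quotient bookkeeping straight, invoking the uniqueness in Proposition~\ref{mixed-reduction} with exactly the right hypothesis ("$B$ fits the pattern of $\III$"), and noting that the condition $A \in \UUU_\alpha$ descends to $Gr(\alpha,k)$.
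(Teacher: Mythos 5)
Your proposal is correct and follows the same route as the paper: both deduce the decomposition from the existence-and-uniqueness statement of Proposition~\ref{mixed-reduction} and both reduce the covering dichotomy to the fact that a matrix fitting the pattern of $\III$ (equivalently, whose mixed reduction yields $\III$) has full rank precisely when $\III$ covers $[k]$. Your write-up simply makes explicit the $L_{\alpha}$-orbit bookkeeping and the rank-invariance under column operations that the paper leaves implicit.
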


\begin{proof}
The disjoint union decomposition follows from Proposition~\ref{mixed-reduction}.
Let $\III$ be a set sequence and suppose that $B$ fits the pattern of $\III$.
From the definition of pattern matrices we see that $B \in \UUU_{\alpha}$ if and only if the 
set sequence $\III$ covers $[k]$, which verifies the second sentence of the lemma.
\end{proof}

The final goal of this section is to show that the cells $C_{\III}$ stratifying 
$Gr(\alpha,k)$ fit together nicely.

\begin{theorem}
\label{affine-paving}
Let $\alpha \in [k]^r$ with $|\alpha| = n$.
The cells 
\begin{equation*}
\{ C_{\III} \,:\, \III = (I_1, \dots, I_r) \text{ is a set sequence in $[k]$ of type $\alpha$} \}
\end{equation*}
 induce an affine paving of 
the Grassmann product $Gr(\alpha,k)$. Furthermore, this affine paving 
\begin{equation*}
\varnothing = X_0 \subset X_1 \subset \cdots \subset X_m = Gr(\alpha,k)
\end{equation*}
may be chosen in such a way that 
$X_{\alpha,k} = Gr(\alpha,k) - X_i$ for some $i$.
\end{theorem}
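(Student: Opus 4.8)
The plan is to deduce the theorem from one geometric input — a \emph{frontier condition} for the cells $C_{\III}$ — together with a bookkeeping argument about the order in which the cells are attached. Recall the standard recipe for affine pavings: if the set sequences of type $\alpha$ are listed as $\III^{(1)}, \dots, \III^{(m)}$ in such a way that each partial union
\begin{equation*}
X_j := C_{\III^{(1)}} \cup \cdots \cup C_{\III^{(j)}}
\end{equation*}
is Zariski closed in $Gr(\alpha,k)$, then $\varnothing = X_0 \subset X_1 \subset \cdots \subset X_m = Gr(\alpha,k)$ is an affine paving: the chain exhausts $Gr(\alpha,k)$ by Lemma~\ref{set-theoretic-stratification}, each term is closed by hypothesis, and $X_j - X_{j-1} = C_{\III^{(j)}}$ is an affine space by Lemma~\ref{c-are-cells}. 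If, in addition, every non-covering set sequence precedes every covering one in the list, then taking $i$ to be the number of non-covering set sequences of type $\alpha$ gives, via Lemma~\ref{set-theoretic-stratification}, $X_i = \bigsqcup_{\III \text{ does not cover } [k]} C_{\III} = Gr(\alpha,k) - X_{\alpha,k}$, so $X_{\alpha,k} = Gr(\alpha,k) - X_i$. Thus the entire theorem reduces to producing such a listing.

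The first step is to observe that we may freely put all non-covering cells before all covering ones. Since $X_{\alpha,k}$ is a Zariski open subset of $Gr(\alpha,k)$ (as noted in the introduction), its complement $Y := Gr(\alpha,k) - X_{\alpha,k}$ is closed, and by Lemma~\ref{set-theoretic-stratification} $Y = \bigsqcup_{\III \text{ does not cover}} C_{\III}$; hence $\overline{C_{\III}} \subseteq Y$ whenever $\III$ does not cover $[k]$, so the closure of a non-covering cell meets no covering cell. Consequently, listing all non-covering cells before all covering cells creates no conflict with the closedness requirement, and we are reduced to ordering the non-covering cells among themselves and the covering cells among themselves. Here I would prove the \textbf{frontier condition}:
\begin{equation*}
\overline{C_{\III}} \setminus C_{\III} \text{ is a union of cells } C_{\III'} \quad \text{for every set sequence } \III \text{ of type } \alpha .
\end{equation*}
Granting it, each $C_{\III'}$ occurring in $\overline{C_{\III}} \setminus C_{\III}$ satisfies $\dim C_{\III'} < \dim C_{\III}$ (the cells of Lemma~\ref{c-are-cells} are irreducible, so the boundary $\overline{C_{\III}}\setminus C_{\III}$ has strictly smaller dimension than $C_{\III}$), and $C_{\III'}$ is non-covering whenever $C_{\III}$ is, since then $\overline{C_{\III}} \subseteq Y$. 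Therefore any listing that places all non-covering cells first and is by nondecreasing dimension within each of the two groups has the property that every cell of $\overline{C_{\III^{(i)}}}$ other than $C_{\III^{(i)}}$ itself appears strictly earlier; hence $X_j = \bigcup_{i \le j} \overline{C_{\III^{(i)}}}$ is a finite union of closed sets, so closed, which is exactly what the recipe asks for.

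So the heart of the matter, and the step I expect to be the main obstacle, is the frontier condition. My plan for it is: given $x \in \overline{C_{\III}}$, run mixed reduction on a matrix representative of $x$ to place $x$ in a unique cell $C_{\III'}$ (Proposition~\ref{mixed-reduction}); then (i) describe exactly which $\III'$ can occur, by tracking the limiting pivot data of a one-parameter family $u_s B_s$ with $u_s \in U(\III)$ and $B_s$ fitting the pattern of $\III$ as the coordinates of $U(\III)$ and the $\star$-entries of $B_s$ are sent to infinity, using the Pl\"ucker-coordinate description of cell entries in Lemma~\ref{minor-recovery}; and (ii) for each resulting $\III'$ and each point $y \in C_{\III'}$, build an explicit curve in $C_{\III}$ degenerating to $y$ by ``inflating'' the reduced form of $y$ back along such a family. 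I would organize both steps around the union flag $(\varnothing \subseteq I_1 \subseteq I_1 \cup I_2 \subseteq \cdots \subseteq I_1 \cup \cdots \cup I_r)$ attached to $\III$, since $U(\III)$ and the permutations $\pi^{(t)}$ depend only on this flag by Observation~\ref{jump-observation}; this should reduce the computation to a block-by-block analysis of closures of $\pi^{(t)}$-permuted Schubert cells, glued along the unipotent factor $U(\III)$. The delicate point — and where I expect the real work to lie — is checking that passing to $\overline{C_{\III}}$ sweeps out \emph{entire} cells of the paving $\{C_{\III'}\}$ rather than merely constructible pieces of them, so that the frontier condition holds on the nose.
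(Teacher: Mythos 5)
Your reduction of the theorem to a frontier condition is logically sound as far as it goes: if $\overline{C_{\III}}\setminus C_{\III}$ were always a union of cells $C_{\III'}$, then listing the non-covering cells first and ordering by nondecreasing dimension within each group would make every initial union closed, and both assertions of the theorem would follow. The gap is that the frontier condition is where the entire mathematical content of the theorem lives, and you do not prove it — you outline a two-step plan (limits of one-parameter families, then explicit degenerating curves) and explicitly flag its crucial step, that closures sweep out \emph{entire} cells rather than merely constructible subsets of cells, as the place where you expect the real work to lie. Nothing in Lemma~\ref{c-are-cells}, Lemma~\ref{set-theoretic-stratification}, or Proposition~\ref{mixed-reduction} delivers this, and the frontier condition is a substantially stronger property than an affine paving requires: the definition only asks that the successive differences $X_i - X_{i-1}$ be disjoint unions of affine spaces, not that each cell closure be a union of cells. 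As written, the proposal is a correct reduction to an unproved lemma for which no evidence is offered, so the theorem is not established.

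It is worth seeing how the paper's argument is structured precisely so as to avoid the frontier condition. It first stratifies $Gr(\alpha,k)$ by the northwest rank functions $\rk(i,\alpha_1+\cdots+\alpha_t,-)$, equivalently by the jump sequences $\JJJ=(J_1,\dots,J_r)$ with $J_t = I_1\cup\cdots\cup I_t$. Since rank can only drop under specialization, the locus where all these ranks are bounded above by a given function is closed; hence any linear extension of the componentwise order on rank functions makes every initial union of strata $\Omega_{\JJJ}$ Zariski closed. The non-covering strata (those with $J_r\neq[k]$) form a down-set in this order, so they can be placed first, which is exactly what produces the index $i$ with $X_{\alpha,k}=Gr(\alpha,k)-X_i$. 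Only then, \emph{within} a fixed stratum, does one order the cells: by Observation~\ref{jump-observation} all $\JJJ$-compatible $\III$ share the same $U(\JJJ)$ and permutation sequence, so $\Omega_{\JJJ}\cong U(\JJJ)\times\bigsqcup_{\III} D_{\III}$ and the second factor is paved by an ordinary product-of-Schubert-cells argument. To repair your proof you would need either to actually prove the frontier condition (a genuinely stronger statement than Theorem~\ref{affine-paving}, and one you should not expect to come cheaply for these mixed-reduction cells) or to replace it by the weaker closure statements that the rank stratification provides for free.
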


\begin{proof}
By Lemma~\ref{set-theoretic-stratification} we know that the $C_{\III}$ are cells whose disjoint union is
$Gr(\alpha,k)$.  In order to show that they induce an affine paving of
$Gr(\alpha,k)$, it remains to demonstrate that they may be totally ordered
in such a way that all initial cell unions are Zariski closed in $Gr(\alpha,k)$.

Given any $k \times n$ matrix 
$A$ and any position $(i,j) \in [k] \times [n]$, we define a number
\begin{equation}
\rk(i,j,A) := \text{rank of the northwest $i \times j$ submatrix of $A$}.
\end{equation}
If $u \in U \subseteq GL_k$ is any lower triangular $k \times k$ unipotent matrix, we have
\begin{equation}
\rk(i,j,uA) = \rk(i,j,A)
\end{equation}
since we are taking northwest submatrices. Furthermore, for any $1 \leq j \leq n$ we have the growth conditions
\begin{equation}
\rk(i-1,j,A) \leq \rk(i,j,A) \leq \rk(i-1,j,A) + 1, \quad i = 1, 2, \dots, k
\end{equation}
where we adopt the convention $\rk(0,j,A) = 0$.

The function $\rk(i,j,-)$ is  well-defined  on 
$Gr(\alpha,k)$ whenever $j \in \{ \alpha_1, \alpha_1 + \alpha_2, \dots \}$.
We  have the disjoint union decomposition
\begin{equation}
\label{first-grassmann-decomposition}
Gr(\alpha,k) = \bigsqcup_{\rho} \Omega_{\rho},
\end{equation}
where $\rho$ varies over all functions 
$\rho: [k] \times \{ \alpha_1, \alpha_1 + \alpha_2, \dots \} \rightarrow \ZZ_{\geq 0}$
and $\Omega_{\rho}$ is given by
\begin{equation}
\Omega_{\rho} := \left\{ [A]  \in Gr(\alpha,k) \,:\, 
\rk(i,j,A) = \rho(i,j) \text{ for all $1 \leq i \leq k$ and $j \in \{ \alpha_1, \alpha_1 + \alpha_2, \dots \}$} \right\},
\end{equation}
where $A \in \VVV_{\alpha}$ represents the point $[A] \in Gr(\alpha,k)$.
Although the indexing set of the decomposition~\eqref{first-grassmann-decomposition} is infinite,
all but finitely many of the sets $\Omega_{\rho}$ are empty.

If we totally order the sets $\Omega_{\rho}$ in the decomposition~\eqref{first-grassmann-decomposition} in 
an extension of the partial order on functions $\rho$ given by 
$\rho \leq \rho'$ if and only if $\rho(i,j) \leq \rho'(i,j)$ for all $i, j$, then any initial union of the $\Omega_{\rho}$
will be Zariski closed in $Gr(\alpha,k)$.
The idea is to show that, for fixed $\rho$, the set $\Omega_{\rho}$ is isomorphic to a product of 
affine spaces and  Grassmannians, and that appropriate cells $C_{\III}$ cut out a Schubert-type 
decomposition of this product.
To see what the $\Omega_{\rho}$ look like, 
we give a more combinatorial version of the decomposition~\eqref{first-grassmann-decomposition}.

Let $\rho: [k] \times \{ \alpha_1, \alpha_1 + \alpha_2, \dots \} \rightarrow \ZZ_{\geq 0}$ be  such that 
$\Omega_{\rho} \neq \varnothing$.  We have the slow growth condition
\begin{equation}
\label{slow-growth-row}
\rho(i-1,j) \leq \rho(i,j) \leq \rho(i-1,j) + 1,
\end{equation}
where we set $\rho(0,j) := 0$. This motivates the {\em jump sets} $J_t(\rho) \subseteq [k]$ given by
\begin{equation}
\label{jump-set-definition}
J_t(\rho) := \{ 1 \leq i \leq k \,:\, \rho(i-1, \alpha_1 + \cdots + \alpha_t) < \rho(i, \alpha_1 + \cdots + \alpha_t) \}, \quad
1 \leq t \leq r.
\end{equation}
The function $\rho$ can be recovered from the jump sets $(J_1(\rho), \dots, J_r(\rho))$.
Since $\Omega_{\rho} \neq \varnothing$, these jump sets satisfy the containments
\begin{equation}
\label{jump-set-containment}
J_1(\rho) \subseteq J_2(\rho) \subseteq \cdots \subseteq J_r(\rho)
\end{equation}
as well as the size conditions
\begin{equation}
\label{jump-set-size}
\max(\alpha_i, |J_{i-1}(\rho)|) \leq |J_i(\rho)| \leq |J_{i-1}(\rho)| + \alpha_i,
\end{equation}
where we set $J_0(\rho) := \varnothing$.
We say that a length $r$ sequence $\JJJ = (J_1, \dots, J_r)$ of subsets of $[k]$ 
is a {\em jump sequence} if it satisfies the conditions
\eqref{jump-set-containment} and \eqref{jump-set-size}.  If we let $\Omega_{\JJJ}$ be the family of points
in $Gr(\alpha,k)$ whose rank function $\rho$ has jump set sequence $\JJJ$, we have the following finite and irredundant
version of the decomposition~\ref{first-grassmann-decomposition}:
\begin{equation}
\label{second-grassmann-decomposition}
Gr(\alpha,k) = \bigsqcup_{\JJJ} \Omega_{\JJJ},
\end{equation}
where the union is over all jump sequences $\JJJ$.  

Fix a jump sequence $\JJJ = (J_1, \dots, J_r)$ for the remainder 
of the proof and consider the piece $\Omega_{\JJJ}$
of the decomposition~\eqref{second-grassmann-decomposition}.
Let $\III = (I_1, \dots, I_r)$ be a set sequence in $[k]$ of type $\alpha$ and consider the associated cell $C_{\III}$.
These two subsets of $Gr(\alpha,k)$ are related by the following dichotomy.
\begin{quote}
{\em If $I_1 \cup \cdots \cup I_i =  J_i$ for all $i = 1, 2, \dots, r$ then
$C_{\III} \subseteq \Omega_{\JJJ}$.  Otherwise $C_{\III} \cap \Omega_{\JJJ} = \varnothing$.}
\end{quote}
This dichotomy comes from the fact that if $A$ is any matrix which fits the pattern of $\III$, then
\begin{equation}
\rk(i, \alpha_1 + \cdots + \alpha_t, A) = \text{number of $1$'s northwest of $(i, \alpha_1 + \cdots + \alpha_t)$
in $\PM(\III)$.}
\end{equation}
We have the disjoint union decomposition
\begin{equation}
\label{omega-decomposition}
\Omega_{\JJJ} = \bigsqcup_{\III} C_{\III},
\end{equation}
where $\III$ ranges over all set sequences $\III = (I_1, \dots, I_r)$ in $[k]$ of type $\alpha$ such that
$I_1 \cup \cdots \cup I_i = J_i$ for $i = 1, 2, \dots, r$; call such set sequences
{\em $\JJJ$-compatible}.

By Observation~\ref{jump-observation}, for any two $\JJJ$-compatible set sequences $\III, \III'$ 
we have $\pi^{(t)}_{\III} = \pi^{(t)}_{\III'}$ for all $1 \leq t \leq r$ and $U(\III) = U(\III')$.  We therefore let
$\pi^{(1)}, \dots, \pi^{(t)} \in \symm_k$ and $U(\JJJ)$ be the permutation sequence
and unipotent matrix set associated to any $\JJJ$-compatible set sequence $\III$.

We have a bijection induced by matrix multiplication
\begin{equation}
\label{second-omega-decomposition}
\Omega_{\JJJ} = \bigsqcup_{\text{$\III$ is $\JJJ$-compatible}} C_{\III} \cong 
U(\JJJ) \times \bigsqcup_{\text{$\III$ is $\JJJ$-compatible}} D_{\III}
\end{equation}
where $C_{\III} \cong U(\JJJ) \times D_{\III}$ by Lemma~\ref{c-are-cells} (3).
An argument similar to that of Lemma~\ref{c-are-cells} shows that the bijection
\eqref{second-omega-decomposition} is an isomorphism of varieties.

The cells $D_{\III}$ for $\JJJ$-compatible set sequences $\III$ induce an affine paving
of the disjoint union $ \bigsqcup_{\text{$\III$ is $\JJJ$-compatible}} D_{\III}$ forming the second
factor of Equation~\eqref{second-omega-decomposition}. 
This affine paving is  best understood with an example.
Suppose $k = 4$, $\alpha = (2,2,2)$, and $\JJJ = (13, 134, 134)$.
There are six $\JJJ$-compatible set sequences $\III = (I_1, I_2, I_3)$ of type $\alpha$.
The corresponding cells $D_{\III}$ are displayed below.
\begin{center}
\begin{scriptsize}
\begin{tabular}{c c c}
$\begin{pmatrix}
\begin{matrix}
1 & 0 \\ 0 & 0 \\ 0 & 1 \\ 0 & 0
\end{matrix} & \rvline &
\begin{matrix} \star & 0 \\ 0 & 0 \\ 0 & 1 \\ 1 & 0 \end{matrix} & \rvline &
\begin{matrix} 0 & 0 \\ 0 & 0 \\ 1 & 0 \\ 0 & 1 \end{matrix}
\end{pmatrix}$ & 
$\begin{pmatrix}
\begin{matrix}
1 & 0 \\ 0 & 0 \\ 0 & 1 \\ 0 & 0
\end{matrix} & \rvline &
\begin{matrix} 0 & 1 \\ 0 & 0 \\ \star & \star \\ 1 & 0 \end{matrix} & \rvline &
\begin{matrix} 0 & 0 \\ 0 & 0 \\ 1 & 0 \\ 0 & 1 \end{matrix}
\end{pmatrix}$ & 
$\begin{pmatrix}
\begin{matrix}
1 & 0 \\ 0 & 0 \\ 0 & 1 \\ 0 & 0
\end{matrix} & \rvline &
\begin{matrix} \star & 0 \\ 0 & 0 \\ 0 & 1 \\ 1 & 0 \end{matrix} & \rvline &
\begin{matrix} 1 & 0 \\ 0 & 0 \\ \star & 0 \\ 0 & 1 \end{matrix}
\end{pmatrix}$ \\ \vspace{0.04in} \\
$\III = (13, 34, 34)$ & $\III = (13, 14, 34)$ & $\III = (13, 34, 14)$
\end{tabular}
\end{scriptsize}
\end{center}

\begin{center}
\begin{scriptsize}
\begin{tabular}{c c c}
$\begin{pmatrix}
\begin{matrix}
1 & 0 \\ 0 & 0 \\ 0 & 1 \\ 0 & 0
\end{matrix} & \rvline &
\begin{matrix} 0 & 1 \\ 0 & 0 \\ \star & \star \\ 1 & 0 \end{matrix} & \rvline &
\begin{matrix} 1 & 0 \\ 0 & 0 \\ \star & 0 \\ 0 & 1 \end{matrix}
\end{pmatrix}$ & 
$\begin{pmatrix}
\begin{matrix}
1 & 0 \\ 0 & 0 \\ 0 & 1 \\ 0 & 0
\end{matrix} & \rvline &
\begin{matrix} \star & 0 \\ 0 & 0 \\ 0 & 1 \\ 1 & 0 \end{matrix} & \rvline &
\begin{matrix} 1 & 0 \\ 0 & 0 \\ 0 & 1 \\ \star & \star \end{matrix}
\end{pmatrix}$ & 
$\begin{pmatrix}
\begin{matrix}
1 & 0 \\ 0 & 0 \\ 0 & 1 \\ 0 & 0
\end{matrix} & \rvline &
\begin{matrix} 0 & 1 \\ 0 & 0 \\ \star & \star \\ 1 & 0 \end{matrix} & \rvline &
\begin{matrix} 1 & 0 \\ 0 & 0 \\ 0 & 1 \\ \star & \star \end{matrix}
\end{pmatrix}$ \\ \vspace{0.04in} \\
$\III = (13, 14, 14)$ & $\III = (13, 34, 13)$ & $\III = (13, 14, 13)$
\end{tabular}
\end{scriptsize}
\end{center}

The product of the affine paving of 
$ \bigsqcup_{\text{$\III$ is $\JJJ$-compatible}} D_{\III}$ shown above with the cell $\Omega_{\JJJ}$
gives the desired paving of $\Omega_{\JJJ}$ by the cells $C_{\III}$
by means of \eqref{second-omega-decomposition}.
\end{proof}

\section{Cohomology presentation}
\label{Cohomology}

\subsection{$\ZZ$-modules}
We aim to show that $H^{\bullet}(X_{\alpha,k}; \ZZ)$ is the $\symm_{\alpha}$-invariant 
subring $(R_{\alpha,k})^{\symm_{\alpha}}$ of $R_{\alpha,k}$.
Theorem~\ref{s-structure-theorem} gives information about the 
rational version $S_{\alpha,k} = \QQ \otimes_{\ZZ} R_{\alpha,k}$ of $R_{\alpha,k}$.
We will need to relate vector spaces over $\QQ$
and modules over $\ZZ$.  We collect several basic facts in this direction here.
Recall that a $\ZZ$-module $M$ is {\em free of rank $r$} if
$M$ has an $r$-element $\ZZ$-basis.

\begin{lemma}
\label{basic-module-facts}  
\begin{enumerate}
\item Let $M$ be a free $\ZZ$-module of finite rank, so that $\QQ \otimes_{\ZZ} M$ is a $\QQ$-vector space.
We have $\dim(\QQ \otimes_{\ZZ} M) = \mathrm{rank}(M)$.
\item Let $M$ be a free $\ZZ$-module of finite rank and let $N \subseteq M$ be a submodule. Then $N$ is also a free
$\ZZ$-module.
\item Let $f: M \twoheadrightarrow N$ be a surjective homomorphism between free $\ZZ$-modules of finite rank.
If $\mathrm{rank}(M) = \mathrm{rank}(N)$ then $f$ is an isomorphism.
\end{enumerate}
\end{lemma}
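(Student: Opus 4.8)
The plan is to handle the three parts in order, using throughout only the fact that $\ZZ$ is a principal ideal domain (indeed that every submodule of $\ZZ$ is $0$ or isomorphic to $\ZZ$).

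For part (1), I would fix a $\ZZ$-basis $m_1, \dots, m_r$ of $M$, obtaining an isomorphism $M \cong \ZZ^r$ of $\ZZ$-modules. Since $\QQ \otimes_{\ZZ} (-)$ commutes with finite direct sums, $\QQ \otimes_{\ZZ} M \cong \QQ \otimes_{\ZZ} \ZZ^r \cong \QQ^r$, and one checks directly that $1 \otimes m_1, \dots, 1 \otimes m_r$ is the corresponding $\QQ$-basis; hence $\dim_{\QQ}(\QQ \otimes_{\ZZ} M) = r = \mathrm{rank}(M)$.

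For part (2) — the genuine content of the lemma — I would argue by induction on $r = \mathrm{rank}(M)$. The cases $r = 0$ is trivial and $r = 1$ amounts to the fact that every subgroup of $\ZZ$ is $0$ or of the form $d\ZZ \cong \ZZ$. For the inductive step, write $M = \ZZ m_1 \oplus M'$ with $M'$ free of rank $r-1$, and let $p \colon M \to \ZZ$ be projection onto the first coordinate. Then $N \cap M' = \ker(p|_N)$ is a submodule of $M'$, hence free of rank at most $r-1$ by induction, while $p(N) \subseteq \ZZ$ is free of rank at most $1$. Since $p(N)$ is free (hence projective), the short exact sequence $0 \to N \cap M' \to N \to p(N) \to 0$ splits, so $N \cong (N \cap M') \oplus p(N)$ is free of rank at most $r$.

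For part (3), I would use that $N$ is free, hence projective, to split the surjection $f \colon M \twoheadrightarrow N$, giving $M \cong \ker f \oplus N$. By part (2), $\ker f$ is a free $\ZZ$-module; applying $\QQ \otimes_{\ZZ} (-)$ and part (1) yields $\mathrm{rank}(M) = \mathrm{rank}(\ker f) + \mathrm{rank}(N)$, so the hypothesis $\mathrm{rank}(M) = \mathrm{rank}(N)$ forces $\mathrm{rank}(\ker f) = 0$, whence $\ker f = 0$ and $f$ is an isomorphism. (Alternatively: $\mathrm{id}_{\QQ} \otimes f$ is a surjection of $\QQ$-vector spaces of equal finite dimension by part (1), hence injective; and $\ker f \hookrightarrow M$ is a finitely generated torsion-free $\ZZ$-module with $\QQ \otimes_{\ZZ} \ker f = 0$, so it vanishes.) The only real obstacle is the induction in part (2); parts (1) and (3) are then routine bookkeeping on top of it.
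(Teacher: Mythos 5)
Your proof is correct and complete. Note that the paper itself offers no proof of this lemma at all --- it is stated as a collection of ``basic facts'' about finitely generated $\ZZ$-modules and left to the reader, so there is no argument to compare yours against. What you have written is the standard textbook treatment (the induction in part (2) is the usual proof that a submodule of a finite-rank free module over a PID is free, and parts (1) and (3) follow by tensoring with $\QQ$ and splitting off a projective summand), and every step checks out; in a paper at this level one would simply cite these facts, as the author does.
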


Let $X$ be a complex algebraic variety and let $V$ be a vector bundle over $X$.
Let $X' \rightarrow X$ be the bundle over $X$ whose fiber over $x \in X$ is the space of complete flags in $V_x$.
To present the integral cohomology $H^{\bullet}(X_{\alpha,k}; \ZZ)$, we will need  some 
facts about the embedding $H^{\bullet}(X;\ZZ) \hookrightarrow H^{\bullet}(X';\ZZ)$.

The bundle $X' \rightarrow X$ is a sequence of projective bundles.
Let $V \rightarrow X$ be a vector bundle of rank $r$ 
 and let $p: \mathbb{P}(V) \rightarrow X$ be the associated projective bundle.
 Then the induced map $p^*: H^{\bullet}(X;\ZZ) \rightarrow H^{\bullet}(\mathbb{P}(V);\ZZ)$ is injective.
 In fact, we may present $H^{\bullet}(\mathbb{P}(V); \ZZ)$ as 
 \begin{equation}
 H^{\bullet}(\mathbb{P}(V); \ZZ) = 
 H^{\bullet}(X;\ZZ)[\zeta] / \langle \zeta^r + c_1(V) \zeta^{r-1} + \cdots + c_r(V) \rangle,
 \end{equation}
 where the $c_i(V) \in H^{\bullet}(X;\ZZ)$ are the Chern classes of $V$ and $\zeta \in  H^{2}(\mathbb{P}(V); \ZZ)$
 is the first Chern class of the tautological line bundle $\mathcal{O}(1)$ on $\mathbb{P}(V)$.

 In our setting, the cohomology $H^{\bullet}(X;\ZZ)$ will be a free $\ZZ$-module of finite rank.
 The above discussion motivates the following lemma.

\begin{lemma}
\label{free-module-extension}
Let $A$ be a free $\ZZ$-module of finite rank and let $t$ be a variable. Fix elements $a_1, a_2, \dots, a_r \in A$
and consider the quotient 
\begin{equation*}
B := A[t] / \langle t^r + a_{1} t^{r-1} + \cdots + a_{r-1} t + a_r \rangle.
\end{equation*}
We have the following.
\begin{enumerate}
\item
The ring $B$ is a free $\ZZ$-module.  
\item
The natural map $A \rightarrow B$ is injective so we may view $A$ as a subset of $B$.
\item 
For any integer $N > 0$ and any $b \in B$ such that $N \cdot b \in A$ we have $b \in A$.
\end{enumerate}
\end{lemma}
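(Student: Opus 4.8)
The plan is to exploit the explicit monic structure of the defining relation. First I would fix a $\ZZ$-basis $\{u_1, \dots, u_m\}$ of $A$ and observe that, because the relation $t^r + a_1 t^{r-1} + \cdots + a_r$ is monic in $t$ of degree $r$, division with remainder by this polynomial is a well-defined operation in $A[t]$ (the leading coefficient $1$ is a unit in $\ZZ$). Consequently every element of $B$ has a unique representative of the form $\sum_{j=0}^{r-1} b_j t^j$ with $b_j \in A$, so the residues $\{u_i t^j : 1 \le i \le m,\ 0 \le j \le r-1\}$ form a $\ZZ$-basis of $B$. This simultaneously gives part (1) (the ring $B$ is $\ZZ$-free, of rank $rm$) and part (2) (the map $A \to B$ sends $u_i$ to $u_i t^0$, which is part of a basis, hence is injective and identifies $A$ with the span of the $u_i$, i.e.\ the ``degree $0$ in $t$'' part of $B$).

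For part (3), I would take $b \in B$ with $N \cdot b \in A$ for some positive integer $N$. Write $b$ in the normal form $b = \sum_{j=0}^{r-1} b_j t^j$ with $b_j \in A$; then $N b = \sum_{j=0}^{r-1} (N b_j) t^j$ is also in normal form, and by the uniqueness of the normal form, $N b \in A$ forces $N b_j = 0$ in $A$ for all $j \ge 1$. Since $A$ is a free $\ZZ$-module, it is torsion-free, so $N b_j = 0$ with $N > 0$ implies $b_j = 0$ for all $j \ge 1$. Therefore $b = b_0 \in A$, as desired.

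I do not expect any serious obstacle here; the only point requiring a little care is the justification that the normal-form representation is genuinely unique and $\ZZ$-linear, which reduces to the standard fact that Euclidean division by a monic polynomial works over any commutative ring. Once that is in hand, parts (1), (2), (3) all fall out formally from freeness (hence torsion-freeness) of $A$ and the uniqueness of the normal form. If one prefers to avoid an explicit basis of $A$, one can phrase the same argument by noting that $B \cong A^{\oplus r}$ as an $A$-module via $b \mapsto (b_0, \dots, b_{r-1})$, which is manifestly $\ZZ$-free when $A$ is, and under which $A \hookrightarrow B$ is the inclusion of the first summand; part (3) is then immediate from torsion-freeness of each summand.
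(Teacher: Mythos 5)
Your proposal is correct and follows essentially the same route as the paper: the paper's proof also fixes a $\ZZ$-basis $\alpha_1,\dots,\alpha_m$ of $A$, notes that $\{\alpha_i t^j : 1 \le i \le m,\ 0 \le j \le r-1\}$ is a $\ZZ$-basis of $B$, and deduces (1)--(3) from the form of this basis. You have simply filled in the details (monic division and torsion-freeness) that the paper leaves implicit.
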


\begin{proof}
If $\alpha_1, \dots, \alpha_m$ is a $\ZZ$-basis of $A$, it follows that 
$\{ \alpha_i t^j \,:\, 1 \leq i \leq m, \, 0 \leq j \leq r-1\}$
is a $\ZZ$-basis of $B$, which proves (1). Statements (2) and (3) follow from the form of this basis.
\end{proof}

\subsection{The ring $H^{\bullet}(X_{\alpha,k}; \ZZ)$}
As a first application of Lemma~\ref{basic-module-facts}, we have the following.

\begin{lemma}
\label{r-structure-lemma}
Let $\alpha = (\alpha_1, \dots, \alpha_r) \in [k]^r$ with $|\alpha| = n$.  
\begin{enumerate}
\item
The ring $R_{\alpha,k}$ is a free $\ZZ$-module of rank
$|\OP_{\alpha,k}|$.  
\item
The invariant subring $(R_{\alpha,k})^{\symm_{\alpha}}$ is a free $\ZZ$-module of rank
$|\OP_{\alpha,k}|/(\alpha_1! \cdots \alpha_r!)$.
\end{enumerate}
\end{lemma}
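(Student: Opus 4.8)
The plan is to read off both statements from the Gr\"obner-theoretic data assembled in Section~\ref{Quotient}, exploiting the integrality built into Proposition~\ref{groebner-proposition} together with the identification $S_{\alpha,k} = \QQ \otimes_{\ZZ} R_{\alpha,k}$.

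First I would prove (1). By Proposition~\ref{groebner-proposition} the ideal $I_{\alpha,k} \subseteq \ZZ[\xx_n]$ contains a finite family $\mathcal{G}$ of polynomials, namely the Demazure characters $\kappa_{\gamma(S)^*}(\xx_n)$ with $|S| = n-k+1$ and the complete homogeneous symmetric polynomials in initial segments of the batches, which form a Gr\"obner basis of $J_{\alpha,k}$ under $\neglex$, have integer coefficients, and have $\neglex$-leading coefficient $1$. Because these leading coefficients are units, the division algorithm with respect to $\mathcal{G}$ never introduces denominators: applied to an arbitrary $f \in \ZZ[\xx_n]$ it terminates in a $\ZZ$-linear combination of monomials none of which is divisible by a leading term of $\mathcal{G}$, i.e. a $\ZZ$-linear combination of monomials in $\MMM_{\alpha,k}$ (here I use that $\MMM_{\alpha,k}$ was \emph{defined} in Definition~\ref{nonskip-monomials} to be exactly the monomials avoiding those leading terms). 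Since $\mathcal{G} \subseteq I_{\alpha,k}$, the output is congruent to $f$ modulo $I_{\alpha,k}$, so $\MMM_{\alpha,k}$ spans $R_{\alpha,k}$ over $\ZZ$. For independence, any relation $\sum_{m \in \MMM_{\alpha,k}} c_m m = 0$ in $R_{\alpha,k}$ with $c_m \in \ZZ$ becomes the same relation in $S_{\alpha,k} = \QQ \otimes_{\ZZ} R_{\alpha,k}$, where $\MMM_{\alpha,k}$ is a $\QQ$-basis by Theorem~\ref{s-structure-theorem}; hence every $c_m$ vanishes. Thus $\MMM_{\alpha,k}$ is a $\ZZ$-basis of $R_{\alpha,k}$, and $\mathrm{rank}(R_{\alpha,k}) = |\MMM_{\alpha,k}| = |\OP_{\alpha,k}|$ by Lemma~\ref{cardinality-coincidence}.

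Next I would prove (2). The invariant subring $(R_{\alpha,k})^{\symm_{\alpha}}$ is a submodule of the free $\ZZ$-module $R_{\alpha,k}$, hence free of finite rank by Lemma~\ref{basic-module-facts}(2). To compute its rank, note that forming $\symm_{\alpha}$-invariants commutes with the flat base change $\QQ \otimes_{\ZZ} -$, since $(-)^{\symm_{\alpha}}$ is the kernel of the map $m \mapsto (\sigma.m - m)_{\sigma \in \symm_{\alpha}}$ and $\QQ$ is $\ZZ$-flat; therefore $\QQ \otimes_{\ZZ} (R_{\alpha,k})^{\symm_{\alpha}} = (S_{\alpha,k})^{\symm_{\alpha}}$, and by Lemma~\ref{basic-module-facts}(1) the rank of $(R_{\alpha,k})^{\symm_{\alpha}}$ equals $\dim_{\QQ}(S_{\alpha,k})^{\symm_{\alpha}}$. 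By Theorem~\ref{s-structure-theorem} we have $S_{\alpha,k} \cong \QQ[\OP_{\alpha,k}]$ as $\symm_{\alpha}$-modules, so $\dim_{\QQ}(S_{\alpha,k})^{\symm_{\alpha}}$ is the number of $\symm_{\alpha}$-orbits on $\OP_{\alpha,k}$; as observed in the proof of Lemma~\ref{cardinality-coincidence}, this action is free, so the orbit count is $|\OP_{\alpha,k}|/|\symm_{\alpha}| = |\OP_{\alpha,k}|/(\alpha_1! \cdots \alpha_r!)$, as claimed.

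The main obstacle is the integrality in step (1): everything hinges on the fact that $I_{\alpha,k}$ itself, and not merely its rational extension $J_{\alpha,k}$, contains monic (leading coefficient $1$) polynomials whose leading monomials generate $\initial_{\neglex}(J_{\alpha,k})$ — precisely what the Demazure identity of \cite{HRS} (Theorem~\ref{demazure-identity}) and Proposition~\ref{groebner-proposition} supply. Without this, the $\ZZ$-division algorithm could stall or manufacture torsion in the quotient, and the comparison $\mathrm{rank}(R_{\alpha,k}) = \dim_{\QQ}(S_{\alpha,k})$ would break down. The remaining ingredients — flat base change of invariants and counting orbits of a free action — are routine.
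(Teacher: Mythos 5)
Your proposal is correct and follows essentially the same route as the paper: part (1) rests on the integrality and monic leading coefficients supplied by Proposition~\ref{groebner-proposition} to show $\MMM_{\alpha,k}$ spans $R_{\alpha,k}$ over $\ZZ$ (your appeal to the division algorithm is just a repackaging of the paper's induction on the $\neglex$ order), with linear independence imported from Theorem~\ref{s-structure-theorem}; part (2) is the same rank count via base change to $\QQ$ and freeness of the $\symm_{\alpha}$-action on $\OP_{\alpha,k}$. Your explicit flatness justification for $\QQ \otimes_{\ZZ} (R_{\alpha,k})^{\symm_{\alpha}} = (S_{\alpha,k})^{\symm_{\alpha}}$ is a small point the paper leaves implicit, but it is not a different argument.
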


\begin{proof}
(1)  We show that the collection $\MMM_{\alpha,k}$ of $\alpha$-nonskip monomials descends
to a $\ZZ$-basis of $R_{\alpha,k}$. 
By Lemma~\ref{cardinality-coincidence} we have $|\MMM_{\alpha,k}| = |\OP_{\alpha,k}|$, so would prove the claim. 
By Theorem~\ref{s-structure-theorem}, we know that these monomials
descend to a $\QQ$-basis of $S_{\alpha,k} = \QQ \otimes_{\ZZ} R_{\alpha,k}$, so it suffices to prove that
these monomials span $R_{\alpha,k}$ over $\ZZ$.

Let $m$ be any monomial in $\ZZ[\xx_n]$. We need to show that $m$ lies in the span of $\MMM_{\alpha,k}$
modulo $I_{\alpha,k}$.  If $m \in \MMM_{\alpha,k}$ this is clear, so assume that $m \not\in \MMM_{\alpha,k}$.
 The last sentence of 
Proposition~\ref{groebner-proposition} implies that there is a nonzero polynomial $f \in I_{\alpha,k}$ 
(either a Demazure character or a complete homogeneous symmetric polynomial) such that 
$\initial_<(f) \mid m$ with respect to the $\neglex$ term order and that the $\neglex$-leading coefficient
of $f$ equals $1$.
 If we let $m'$ be the quotient
$m' := m/\initial_<(f)$, the $\neglex$-leading term of $m' \cdot f \in I_{\alpha,k}$ is $m$.  
Since $m' \cdot f \equiv 0$
modulo $I_{\alpha,k}$, we get
\begin{equation}
m \cong \text{a $\ZZ$-linear combination of monomials $< m$ in $\neglex$}.
\end{equation}
We are done by induction on the $\neglex$ order.

(2)  By Lemma~\ref{basic-module-facts} (2), we know that $(R_{\alpha,k})^{\symm_{\alpha}}$
is a free $\ZZ$-module.  By Theorem~\ref{s-structure-theorem}, we have
$S_{\alpha,k} \cong \QQ[\OP_{\alpha,k}]$ as $\QQ[\symm_{\alpha}]$-modules. Since the action of 
$\symm_{\alpha}$ on $\OP_{\alpha,k}$ is free, taking 
$\symm_{\alpha}$-invariants and applying Lemma~\ref{basic-module-facts} (1) gives
\begin{equation}
\mathrm{rank}(R_{\alpha,k})^{\symm_{\alpha}} = 
\dim \QQ \otimes_{\ZZ} (R_{\alpha,k})^{\symm_{\alpha}} = \dim S_{\alpha,k}^{\symm_{\alpha}} = 
\dim \QQ[\OP_{\alpha,k}]^{\symm_{\alpha}} = |\OP_{\alpha,k}|/|\symm_{\alpha}|,
\end{equation}
as desired.
\end{proof}

We are ready to prove Theorem~\ref{main-theorem} and present the cohomology of $X_{\alpha,k}$.
Let us recall the statement of the result.

\noindent
{\bf Theorem 1.4.}
{\em Let $\alpha = (\alpha_1, \dots, \alpha_r) \in [k]^r$ be a sequence of positive integers with 
$\alpha_1 + \cdots + \alpha_r = n$.  
The singular cohomology ring $H^{\bullet}(X_{\alpha,k}; \ZZ)$ may be presented as 
\begin{equation}
H^{\bullet}(X_{\alpha,k}; \ZZ) = 
(\ZZ[\xx_n]/I_{\alpha,k})^{\symm_{\alpha}} = (R_{\alpha,k})^{\symm_{\alpha}},
\end{equation}
where $I_{\alpha,k} \subseteq \ZZ[\xx_n]$ is the ideal generated by 
the complete homogeneous
symmetric polynomials $h_d(\xx_n^{(i)})$ where $d > k - \alpha_i$ and $1 \leq i \leq r$
together with the elementary symmetric polynomials $e_n(\xx_n), e_{n-1}(\xx_n), \dots, e_{n-k+1}(\xx_n)$.
Here the variables  $\xx^{(i)}_n$ represent the Chern roots of the tautological vector bundle 
$W_i^* \twoheadrightarrow X_{\alpha,k}$.}

\begin{proof}
Theorem~\ref{affine-paving} implies that the inclusion $\iota: X_{\alpha,k} \hookrightarrow Gr(\alpha,k)$
induces a surjective map 
\begin{equation}
\iota^*: H^{\bullet}(Gr(\alpha,k); \ZZ) \twoheadrightarrow H^{\bullet}(X_{\alpha,k}; \ZZ)
\end{equation}
on cohomology.
It follows that $H^{\bullet}(X_{\alpha,k}; \ZZ)$ is generated by the Chern classes of the bundles $W_1^*, \dots, W_r^*$.
In terms of the variables $x_i$, we get the following.
\begin{quote}
{\em The cohomology ring $H^{\bullet}(X_{\alpha,k}; \ZZ)$ is generated by the elementary symmetric polynomials
in the partial variable sets
\begin{equation}
e_d(\xx_n^{(i)}), \quad
1 \leq i \leq r, \, \, 1 \leq d \leq \alpha_i.
\end{equation}
Pushing forward relations under $\iota^*$, we have
\begin{equation}
h_d(\xx_n^{(i)}) = 0, \quad
1 \leq i \leq r, \, \, k - \alpha_i + 1 \leq d \leq k
\end{equation}
 in $H^{\bullet}(X_{\alpha,k}; \ZZ)$.}
\end{quote}

The Whitney Sum Formula provides  additional relations  in $H^{\bullet}(X_{\alpha,k}; \ZZ)$.
The addition map $(w_1, \dots, w_r) \mapsto w_1 + \cdots + w_r$ gives a vector 
bundle surjection
\begin{equation}
W_1 \oplus \cdots \oplus W_r \twoheadrightarrow \CC^k,
\end{equation}
where $\CC^k$ is the trivial rank $k$ vector bundle over $X_{\alpha,k}$.  This dualizes to give an injection
\begin{equation}
(\CC^k)^* \hookrightarrow W_1^* \oplus \cdots \oplus W_r^*,
\end{equation}
so that the direct sum $W_1^* \oplus \cdots \oplus W_r^*$ has a trivial rank $k$ subbundle.
If we let $U$ be the corresponding rank $n-k$ quotient of $W_1^* \oplus \cdots \oplus W_r^*$,
the Whitney Sum Formula tells us that 
\begin{equation}
c_{\bullet}(U) = c_{\bullet}(W_1^*) \cdots c_{\bullet}(W_r^*)
\end{equation}
where 
\begin{equation}
c_{\bullet}(W_i^*) = (1 + x_{\alpha_1 + \cdots + \alpha_{i-1} + 1} t) 
 (1 + x_{\alpha_1 + \cdots + \alpha_{i-1} + 2} t) \cdots 
  (1 + x_{\alpha_1 + \cdots + \alpha_{i-1} + \alpha_i} t).
\end{equation}
Since $c_{\bullet}(U)$ is a polynomial in $t$ of degree $\leq n-k$, we have
$e_d(\xx_n) = 0$ in $H^{\bullet}(X_{\alpha,k}; \ZZ)$ 
for each $d > n-k$.

Let $p: X'_{\alpha,k} \rightarrow X_{\alpha,k}$ be the bundle whose fiber over a point 
$(W_1, \dots, W_r) \in X_{\alpha,k}$ is the product space 
$\mathcal{F \ell}(W_1) \times \cdots \times \mathcal{F \ell}(W_r)$ of complete flags in $W_1, \dots, W_r$.
The Chern roots $x_1, \dots, x_n$ are elements of $H^{\bullet}(X'_{\alpha,k}; \ZZ)$.
Any $\symm_{\alpha}$-invariant polynomial in these Chern roots lies in $H^{\bullet}(X_{\alpha,k}; \ZZ)$.
The map $p$ induces an inclusion $p^*: H^{\bullet}(X_{\alpha,k}; \ZZ) \hookrightarrow H^{\bullet}(X'_{\alpha},\ZZ)$.
The second sentence in italics and the paragraph above imply that we have a map 
of $\ZZ$-algebras
\begin{equation}
\widehat{\varphi}: (R_{\alpha,k})^{\symm_{\alpha}} = (\ZZ[\xx_n]/I_{\alpha,k})^{\symm_{\alpha}} 
\rightarrow H^{\bullet}(X_{\alpha,k}'; \ZZ).
\end{equation}

We claim that the image of $\widehat{\varphi}$ in fact lies in the subring $H^{\bullet}(X_{\alpha,k};\ZZ)$ of 
$H^{\bullet}(X'_{\alpha,k};\ZZ)$.
If we were using rational coefficients, this would be shown as follows.
Let $\alpha! := \alpha_1! \cdots \alpha_r!$ be the size of the group $\symm_{\alpha}$.
Given any $f \in (R_{\alpha,k})^{\symm_{\alpha}}$ we could average 
$\widehat{\varphi}(f)$ over $\symm_{\alpha}$
to obtain
\begin{equation}
\widehat{\varphi}(f) =  
\widehat{\varphi} \left( \frac{1}{\alpha!} \sum_{\pi \in \symm_{\alpha}} \pi \cdot f \right) = 
\frac{1}{\alpha!} 
\sum_{\pi \in \symm_{\alpha}} \pi \cdot \widehat{\varphi}(f),
\end{equation}
where the first equality would use the fact that $f$ is $\symm_{\alpha}$-invariant.
We see that $\widehat{\varphi}(f)$ would also be 
also $\symm_{\alpha}$-invariant and would lie in the cohomology of $X_{\alpha,k}$.

Since we are using integer coefficients, we cannot divide by $\alpha!$ and the argument in the above 
paragraph does not work.
However, this argument can be salvaged.
For $f \in (R_{\alpha,k})^{\symm_{\alpha}}$
the image 
\begin{equation} \alpha! \cdot \widehat{\varphi}(f) =
\widehat{\varphi}(\alpha! \cdot f) = 
\widehat{\varphi} \left( \sum_{\pi \in \symm_{\alpha}} \pi \cdot f \right) = 
\sum_{\pi \in \symm_{\alpha}} \pi \cdot \widehat{\varphi}(f),
\end{equation}
of $\alpha! \cdot f$ under $\widehat{\varphi}$ is $\symm_{\alpha}$-invariant and so lies in 
$H^{\bullet}(X_{\alpha,k}; \ZZ)$.
We need to show that $\widehat{\varphi}(f)$ itself lies in $H^{\bullet}(X_{\alpha,k}; \ZZ)$.
Since $p: X'_{\alpha,k} \rightarrow X_{\alpha,k}$ is a sequence of projective bundles and 
$H^{\bullet}(X_{\alpha,k}; \ZZ)$ is a free $\ZZ$-module of finite rank, the ring extension 
$p^*: H^{\bullet}(X_{\alpha,k};\ZZ) \hookrightarrow H^{\bullet}(X'_{\alpha,k};\ZZ)$
is a sequence of extensions of the form addressed in Lemma~\ref{free-module-extension}. 
By Lemma~\ref{free-module-extension} (3) and induction, we have 
$\widehat{\varphi}(f) \in H^{\bullet}(X_{\alpha,k};\ZZ)$.

Since the image of $\widehat{\varphi}$ lies in $H^{\bullet}(X_{\alpha,k};\ZZ)$ we may restrict the codomain
of $\widehat{\varphi}$ to get a map
of $\ZZ$-algebras
\begin{equation}
\varphi: (R_{\alpha,k})^{\symm_{\alpha}} = (\ZZ[\xx_n]/I_{\alpha,k})^{\symm_{\alpha}} 
\rightarrow H^{\bullet}(X_{\alpha,k}; \ZZ).
\end{equation}
The first sentence in italics above implies that $\varphi$ is a surjection.
By Lemma~\ref{r-structure-lemma} and Theorem~\ref{affine-paving}
(as well as Lemma~\ref{set-theoretic-stratification}), 
we know that the domain
and codomain of $\varphi$ are free of rank $|\OP_{\alpha,k}|/\alpha!$.
By Lemma~\ref{basic-module-facts} (3) we conclude that $\varphi$ is an isomorphism.
\end{proof}

Since the cohomology of $X_{\alpha,k}$ is concentrated in even degree,
Theorem~\ref{main-theorem} and the Universal Coefficient Theorem
imply that the rational cohomology of $X_{\alpha,k}$ is given by
\begin{equation}
\label{rational-cohomology-presentation}
H^{\bullet}(X_{\alpha,k}; \QQ) = \QQ \otimes_{\ZZ} (R_{\alpha,k})^{\symm_{\alpha}} = (S_{\alpha,k})^{\symm_{\alpha}}.
\end{equation}

The description of $H^{\bullet}(X_{\alpha,k}; \ZZ) = (R^{\alpha})^{\symm_{\alpha}}$ in terms of Chern roots
implies the following result.

\begin{corollary}
\label{symmetric-action-corollary}
Let $\alpha = (\alpha_1, \dots, \alpha_r) \in [k]^r$ with $|\alpha| = n$.
Let $\pi \in \symm_r$ and
set $\pi.\alpha := (\alpha_{\pi(1)}, \dots, \alpha_{\pi(r)})$.
We have the following pair of commutative diagrams.
\begin{center}
\begin{tikzpicture}
\node at (0,0) (A) {$(R_{\alpha,k})^{\symm_{\alpha}}$};

\node at (3,0) (B) {$(R_{\pi.\alpha})^{\symm_{\pi.\alpha}}$};

\node at (0,2) (C) {$H^{\bullet}(X_{\alpha,k}; \ZZ)$};

\node at (3,2) (D) {$H^{\bullet}(X_{\pi.\alpha}; \ZZ)$};

\node at (7,0) (E) {$(S_{\alpha,k})^{\symm_{\alpha}}$};

\node at (10,0) (F) {$(S_{\pi.\alpha})^{\symm_{\pi.\alpha}}$};

\node at (7,2) (G) {$H^{\bullet}(X_{\alpha,k}; \QQ)$};

\node at (10,2) (H) {$H^{\bullet}(X_{\pi.\alpha}; \QQ)$};

\draw [->] (A) -- (B);

\draw [->] (C) -- (D);

\draw [->] (E) -- (F);

\draw [->] (G) -- (H);

\draw [double, double distance = 0.04cm] (A) -- (C);

\draw [double, double distance = 0.04cm] (B) -- (D);

\draw [double, double distance = 0.04cm] (E) -- (G);

\draw [double, double distance = 0.04cm] (F) -- (H);
\end{tikzpicture}
\end{center}
The vertical equalities come from Theorem~\ref{main-theorem}
(and Equation~\eqref{rational-cohomology-presentation}).
The top horizontal arrows are induced from the action on subspaces 
$(W_1, \dots, W_r) \mapsto (W_{\pi(1)}, \dots, W_{\pi(r)})$.
The bottom horizontal arrows are induced by the action on batches of variables 
$(\xx_n^{(1)}, \dots, \xx_n^{(r)}) \mapsto (\xx_n^{(\pi(1))}, \dots, \xx_n^{(\pi(r))})$.
\end{corollary}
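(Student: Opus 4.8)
The plan is to deduce the Corollary from the functoriality of Chern classes, exploiting that the proof of Theorem~\ref{main-theorem} realizes the vertical identifications through the Chern roots of the tautological bundles $W_i^* \twoheadrightarrow X_{\alpha,k}$. There are three things to do: describe the top arrow geometrically, describe the bottom arrow algebraically, and match them on the ring generators.

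First I would record the geometric input. Permuting the factors of $Gr(\alpha,k) = Gr(\alpha_1,k) \times \cdots \times Gr(\alpha_r,k)$ according to $\pi$ is an isomorphism of varieties $Gr(\alpha,k) \to Gr(\pi.\alpha,k)$ sending $(W_1, \dots, W_r)$ to $(W_{\pi(1)}, \dots, W_{\pi(r)})$, and since $W_1 + \cdots + W_r = W_{\pi(1)} + \cdots + W_{\pi(r)}$ it restricts to an isomorphism $\Phi_\pi \colon X_{\alpha,k} \to X_{\pi.\alpha}$; the top arrows of the two diagrams are the induced isomorphisms $(\Phi_\pi^{\,*})^{-1}$ on cohomology. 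By construction $\Phi_\pi$ pulls back the $i^{th}$ tautological bundle over $X_{\pi.\alpha}$ to the $\pi(i)^{th}$ tautological bundle over $X_{\alpha,k}$, hence $W_i^*$ to $W_{\pi(i)}^*$; functoriality of Chern classes then gives that $\Phi_\pi^{\,*}$ carries $c_j(W_i^*)$ to $c_j(W_{\pi(i)}^*)$ for all $i,j$.

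Next I would set up the algebraic side. The batch--permutation operation is the graded ring automorphism $\theta_\pi$ of $\ZZ[\xx_n]$ induced by the permutation of $x_1, \dots, x_n$ that carries the $\pi(i)^{th}$ batch of variables (of $\alpha$) order-preservingly onto the $i^{th}$ batch of variables (of $\pi.\alpha$); this is well defined because these two batches share the common size $(\pi.\alpha)_i = \alpha_{\pi(i)}$ and $\pi$ permutes $[r]$. Since $e_d(\xx_n)$ depends only on the full variable set, it is fixed by $\theta_\pi$; and $\theta_\pi$ carries the complete homogeneous generators of $I_{\alpha,k}$ attached to the $\pi(i)^{th}$ batch onto those of $I_{\pi.\alpha,k}$ attached to the $i^{th}$ batch, the degree ranges agreeing because they depend only on the batch size. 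Hence $\theta_\pi$ descends to a graded ring isomorphism $R_{\alpha,k} \to R_{\pi.\alpha,k}$ intertwining the $\symm_{\alpha}$- and $\symm_{\pi.\alpha}$-actions (permuting the entries of the $\pi(i)^{th}$ batch corresponds to permuting those of the $i^{th}$ batch), and restricting to invariants it gives precisely the bottom arrow $(R_{\alpha,k})^{\symm_{\alpha}} \to (R_{\pi.\alpha,k})^{\symm_{\pi.\alpha}}$, i.e.\ the map induced by $(\xx_n^{(1)}, \dots, \xx_n^{(r)}) \mapsto (\xx_n^{(\pi(1))}, \dots, \xx_n^{(\pi(r))})$.

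Finally I would check commutativity by chasing the ring generators $e_d(\xx_n^{(m)})$, $1 \le d \le \alpha_m$, of $(R_{\alpha,k})^{\symm_{\alpha}}$. By the Chern-root description of the isomorphism $\varphi$ built in the proof of Theorem~\ref{main-theorem}, the vertical maps send such a generator to $c_d(W_m^*)$. Going down then across, $e_d(\xx_n^{(m)}) \mapsto c_d(W_m^* \twoheadrightarrow X_{\alpha,k}) \mapsto c_d(W_{\pi^{-1}(m)}^* \twoheadrightarrow X_{\pi.\alpha})$, the last step because $(\Phi_\pi^{\,*})^{-1}$ sends the $\pi(i)^{th}$ tautological bundle over $X_{\alpha,k}$ to the $i^{th}$ one over $X_{\pi.\alpha}$ by the geometric input. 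Going across then down, $\theta_\pi$ relabels $e_d(\xx_n^{(m)})$ to $e_d(\xx_n^{(\pi^{-1}(m))})$ in $R_{\pi.\alpha,k}$, which the vertical map sends to the same $c_d(W_{\pi^{-1}(m)}^* \twoheadrightarrow X_{\pi.\alpha})$. So the integral square commutes; tensoring with $\QQ$ and invoking \eqref{rational-cohomology-presentation} yields the rational square. I expect the main obstacle to be purely clerical: keeping straight which batch of $\alpha$ matches which batch of $\pi.\alpha$ and which tautological bundle, and pinning down the $\pi$-versus-$\pi^{-1}$ conventions attached to the arrows; there is no substantive difficulty, as everything is forced by naturality of Chern classes and the Chern-root form of $\varphi$.
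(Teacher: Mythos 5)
Your proposal is correct and is essentially the paper's argument made explicit: the paper offers no separate proof, asserting only that the corollary follows from the Chern-root description of the identification $H^{\bullet}(X_{\alpha,k};\ZZ)=(R_{\alpha,k})^{\symm_{\alpha}}$ in Theorem~\ref{main-theorem}, which is precisely the generator chase you carry out (including the correct $\pi$-versus-$\pi^{-1}$ bookkeeping). Nothing further is needed.
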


When the dimension vector $\alpha = (d, \dots, d) \in [k]^r$ is constant, we have a combinatorial
model for the ungraded representation $H^{\bullet}(X_{\alpha,k}; \QQ)$ of 
$\symm_r$.

\begin{corollary}
\label{ungraded-combinatorial-model}
Let $\alpha = (d, \dots, d) \in [k]^r$ be a constant dimension vector with $|\alpha| = rd = n$ 
so that $H^{\bullet}(X_{\alpha,k}; \QQ)$ is a $\symm_r$-module.
Let $\CCC_{\alpha}$ be the family of set sequences $\III = (I_1, \dots, I_r)$ in $[k]$
of type $\alpha$ which cover $[k]$.  

The symmetric group $\symm_r$ acts on $\CCC_{\alpha}$ by subscript permutation, viz.
$\pi.(I_1, \dots, I_r) := (I_{\pi(1)}, \dots, I_{\pi(r)})$.
We have an isomorphism of 
ungraded $\symm_r$-modules
\begin{equation}
H^{\bullet}(X_{\alpha,k}; \QQ) \cong \QQ[\CCC_{\alpha}].
\end{equation}
\end{corollary}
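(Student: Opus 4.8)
The plan is to combine the rational cohomology presentation $H^{\bullet}(X_{\alpha,k};\QQ) = (S_{\alpha,k})^{\symm_{\alpha}}$ coming from Theorem~\ref{main-theorem} and Equation~\eqref{rational-cohomology-presentation} with the orbit harmonics identification $S_{\alpha,k} \cong \QQ[\OP_{\alpha,k}]$ from Theorem~\ref{s-structure-theorem}, while carefully tracking the extra $\symm_r$-symmetry available when the dimension vector is constant. Throughout, $\symm_r$ will act on $\QQ[\xx_n]$ by permuting the $r$ batches of variables $(\xx^{(1)}_n, \dots, \xx^{(r)}_n)$; since $\alpha = (d,\dots,d)$ is constant, this is a genuine permutation of the variables $\xx_n$, it normalizes $\symm_{\alpha}$, and it preserves the ideal $J_{\alpha,k}$ (the generators $e_n(\xx_n), \dots, e_{n-k+1}(\xx_n)$ are fully symmetric and the generators $h_{k-d+1}(\xx^{(i)}_n), \dots, h_k(\xx^{(i)}_n)$ are permuted among the batches), so it descends to an algebra automorphism of $S_{\alpha,k}$ stabilizing $(S_{\alpha,k})^{\symm_{\alpha}}$.

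First I would upgrade Theorem~\ref{s-structure-theorem} to a $\symm_r$-equivariant statement. The point locus $Z_{\alpha,k}$ of Definition~\ref{point-locus-definition} is stable under permuting the $r$ batches of coordinates: the condition $\{z_1, \dots, z_n\} = \{\eta_1, \dots, \eta_k\}$ is symmetric in all coordinates, and the condition that each batch of coordinates be distinct is permuted among batches. Hence $Z_{\alpha,k}$ is stable under the group generated by $\symm_{\alpha}$ and the batch permutations (a copy of $\symm_d \wr \symm_r$), and by the discussion following Equation~\eqref{orbit-harmonics-isomorphisms} the orbit harmonics isomorphisms are equivariant for this group. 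Using $J_{\alpha,k} = \TT(Z_{\alpha,k})$ (established in the proof of Theorem~\ref{s-structure-theorem}) and $\QQ[Z_{\alpha,k}] = \QQ[\OP_{\alpha,k}]$, this gives an isomorphism $S_{\alpha,k} \cong \QQ[\OP_{\alpha,k}]$ that is equivariant for batch permutation, where $\symm_r$ acts on $\OP_{\alpha,k}$ by permuting the corresponding batches of letters. By Corollary~\ref{symmetric-action-corollary} (with $\pi.\alpha = \alpha$), the batch-permutation action on $(S_{\alpha,k})^{\symm_{\alpha}} = H^{\bullet}(X_{\alpha,k};\QQ)$ is precisely the geometric action induced by $(W_1, \dots, W_r) \mapsto (W_{\pi(1)}, \dots, W_{\pi(r)})$.

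Next I would take $\symm_{\alpha}$-invariants. Since $\symm_{\alpha}$ acts freely on $\OP_{\alpha,k}$ (by the very definition of $\OP_{\alpha,k}$), the invariant space $\QQ[\OP_{\alpha,k}]^{\symm_{\alpha}}$ has a basis consisting of the orbit sums $\sum_{\tau \in \symm_{\alpha}.\sigma} \tau$, one per $\symm_{\alpha}$-orbit, and $\symm_r$ permutes this basis exactly as it permutes the orbits; thus $\QQ[\OP_{\alpha,k}]^{\symm_{\alpha}} \cong \QQ[\OP_{\alpha,k}/\symm_{\alpha}]$ as $\symm_r$-modules. Finally, the natural correspondence recalled in Section~\ref{Background} between $\symm_{\alpha}$-orbits in $\OP_{\alpha,k}$ and set sequences of type $\alpha$ covering $[k]$ is $\symm_r$-equivariant, sending permutation of batches of letters to subscript permutation of set sequences; this identifies $\QQ[\OP_{\alpha,k}/\symm_{\alpha}]$ with $\QQ[\CCC_{\alpha}]$ and yields the claimed isomorphism of ungraded $\symm_r$-modules.

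The main obstacle is the equivariance bookkeeping at the two junctures above: promoting Theorem~\ref{s-structure-theorem}'s $\symm_{\alpha}$-equivariant isomorphism to one for $\symm_d \wr \symm_r$ (checking that $Z_{\alpha,k}$ is stable under batch permutation and that batch permutation really descends to $S_{\alpha,k}$), and matching the resulting combinatorial $\symm_r$-action with the topological one via Corollary~\ref{symmetric-action-corollary}. Each of these reduces to unwinding definitions; no new ideas beyond those already in the paper are required.
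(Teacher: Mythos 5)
Your proposal is correct and follows essentially the same route as the paper: the paper also upgrades the orbit-harmonics isomorphism $S_{\alpha,k}\cong\QQ[\OP_{\alpha,k}]$ to a $\symm_d\wr\symm_r$-equivariant one (via stability of the point locus $Z_{\alpha,k}$ under batch permutation) and then takes $\symm_{\alpha}$-invariants to extract the residual $\symm_r$-action. Your write-up simply makes explicit the equivariance bookkeeping (the orbit-sum basis, the orbit/set-sequence correspondence, and the appeal to Corollary~\ref{symmetric-action-corollary}) that the paper's terser proof leaves implicit.
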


\begin{proof}
The {\em wreath product} $\symm_d \wr \symm_r$ is the subgroup of $\symm_n$ given by
the semidirect product
\begin{equation*} \symm_d \wr \symm_r :=
\symm_r \rtimes (\symm_d \times \cdots \times \symm_d) = \symm_r \rtimes \symm_{\alpha},
\end{equation*}
where there are $r$ factors of $\symm_d$.  The symmetric group $\symm_r$ acts on the $r$-fold
product $\symm_{\alpha}$ by factor permutation.
We have a natural embedding 
\begin{equation*}
\symm_{\alpha} = 1 \rtimes \symm_{\alpha} \subseteq \symm_r \rtimes \symm_{\alpha} = \symm_d \wr \symm_r.
\end{equation*}

The point locus $Z_{\alpha,k} \subseteq \QQ^n$ of 
Definition~\ref{point-locus-definition} is closed under the action of $\symm_d \wr \symm_r$.
Theorem~\ref{s-structure-theorem} (and its proof) give an isomorphism of ungraded
$\symm_d \wr \symm_r$-modules 
$S_{\alpha,k} \cong \QQ[\OP_{\alpha,k}]$. Taking $\symm_{\alpha}$-invariants and looking 
and the residual action of $\symm_r$ gives the corollary.
\end{proof}

\section{Open Problems}
\label{Open}

For any $\alpha \in [k]^r$ with $|\alpha| = n$, 
Theorem~\ref{main-theorem} gives a  geometric model for the ring
of $\symm_{\alpha}$-invariants  $(R_{\alpha,k})^{\symm_{\alpha}}$.  It is natural to ask for a 
geometric model for $R_{\alpha,k}$ itself.  One candidate is as follows.

\begin{defn}
\label{line-configuration-definition}
Let $\alpha = (\alpha_1, \dots, \alpha_r) \in [k]^r$ with $|\alpha| = n$.  Let $Y_{\alpha,k}$ be the moduli
space of $n$-tuples $(\ell_1, \dots, \ell_n)$ of $1$-dimensional subspaces $\ell_i \subseteq \CC^k$ such that
\begin{itemize}
\item we have $\ell_1 + \cdots + \ell_n = \CC^k$ and
\item for each $1 \leq i \leq r$, the subspace 
$\ell_{\alpha_1 + \cdots + \alpha_{i-1} + 1} + \cdots + \ell_{\alpha_1 + \cdots + \alpha_{i-1} + \alpha_i}$
of $\CC^k$
spanned by the $i^{th}$ batch of lines is $\alpha_i$-dimensional.
\end{itemize}
\end{defn}

Wilson and the author \cite{RW} studied $Y_{\alpha,k}$ in the special case $\alpha = (m,1, \dots, 1)$.

\begin{conjecture}
\label{y-cohomology-conjecture}
We have the presentation $H^{\bullet}(Y_{\alpha,k}; \ZZ) = R_{\alpha,k}$ where the variable $x_i$ represents
the Chern class $c_1(\ell_i^*)$ of the line bundle $\ell_i^* \twoheadrightarrow Y_{\alpha,k}$.
\end{conjecture}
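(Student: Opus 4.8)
The plan is to identify $Y_{\alpha,k}$ up to homotopy with the complete flag bundle $X'_{\alpha,k} \to X_{\alpha,k}$ appearing in the proof of Theorem~\ref{main-theorem}, and then combine that theorem with the projective bundle formula and a rank count.

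\emph{Reduction to $X'_{\alpha,k}$.} Given a point $(\ell_1, \dots, \ell_n) \in Y_{\alpha,k}$ as in Definition~\ref{line-configuration-definition}, the $\alpha_i$ lines in the $i^{th}$ batch span a subspace $W_i$ of dimension $\alpha_i$, hence are linearly independent, so their initial partial sums form a complete flag $0 \subset V_1^{(i)} \subset \cdots \subset V_{\alpha_i}^{(i)} = W_i$ in $W_i$; moreover $W_1 + \cdots + W_r = \ell_1 + \cdots + \ell_n = \CC^k$, so $(W_1, \dots, W_r) \in X_{\alpha,k}$. The assignment $(\ell_i) \mapsto ((W_i),(F_i))$ defines a morphism $\phi \colon Y_{\alpha,k} \to X'_{\alpha,k}$. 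First I would show that $\phi$ is surjective and is a Zariski-locally trivial iterated affine-space bundle: over a point with batch-$i$ flag as above, the line $\ell_{\alpha_1 + \cdots + \alpha_{i-1}+1}$ is forced to equal $V_1^{(i)}$, while for $2 \le j \le \alpha_i$ the line $\ell_{\alpha_1 + \cdots + \alpha_{i-1}+j}$ ranges over $\mathbb{P}(V_j^{(i)}) \setminus \mathbb{P}(V_{j-1}^{(i)})$, a torsor under $\mathrm{Hom}(V_j^{(i)}/V_{j-1}^{(i)}, V_{j-1}^{(i)})$. Since the fibers are thus iterated affine bundles (in particular contractible), $\phi$ is a homotopy equivalence, so $\phi^* \colon H^{\bullet}(X'_{\alpha,k};\ZZ) \xrightarrow{\sim} H^{\bullet}(Y_{\alpha,k};\ZZ)$. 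Moreover the composite $\ell_{\alpha_1 + \cdots + \alpha_{i-1}+j} \hookrightarrow V_j^{(i)} \twoheadrightarrow V_j^{(i)}/V_{j-1}^{(i)}$ is an isomorphism of line bundles on $Y_{\alpha,k}$, so under $\phi^*$ the $j^{th}$ Chern root of $W_i^*$ on $X'_{\alpha,k}$ (the variable $x_{\alpha_1 + \cdots + \alpha_{i-1}+j}$ of Theorem~\ref{main-theorem}) corresponds to $c_1(\ell_{\alpha_1 + \cdots + \alpha_{i-1}+j}^*)$.

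\emph{Presenting $H^{\bullet}(X'_{\alpha,k};\ZZ)$ and counting ranks.} From the proof of Theorem~\ref{main-theorem} I would borrow that $p \colon X'_{\alpha,k} \to X_{\alpha,k}$ is a sequence of projective bundles, that $H^{\bullet}(X_{\alpha,k};\ZZ) = (R_{\alpha,k})^{\symm_{\alpha}}$ is generated as a ring by the Chern classes $c_j(W_i^*) = e_j(\xx_n^{(i)})$, and that every generator of $I_{\alpha,k}$ vanishes in $H^{\bullet}(X_{\alpha,k};\ZZ)$ and hence, being pulled back, in $H^{\bullet}(X'_{\alpha,k};\ZZ)$. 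Therefore $H^{\bullet}(X'_{\alpha,k};\ZZ)$ is generated as a ring by the Chern roots $x_1, \dots, x_n$, so the ring map $\ZZ[\xx_n] \to H^{\bullet}(X'_{\alpha,k};\ZZ)$ sending $x_i$ to the $i^{th}$ Chern root is surjective and factors through a surjection $\psi \colon R_{\alpha,k} \twoheadrightarrow H^{\bullet}(X'_{\alpha,k};\ZZ)$. Applying the projective bundle formula iteratively (as in Lemma~\ref{free-module-extension}), $H^{\bullet}(X'_{\alpha,k};\ZZ)$ is free over $H^{\bullet}(X_{\alpha,k};\ZZ)$ of rank $\prod_{i=1}^r \alpha_i! = \alpha!$, the total Betti number of the fiber $\mathcal{F \ell}(\alpha_1) \times \cdots \times \mathcal{F \ell}(\alpha_r)$. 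By Lemma~\ref{r-structure-lemma}, $H^{\bullet}(X_{\alpha,k};\ZZ)$ has rank $|\OP_{\alpha,k}|/\alpha!$ and $R_{\alpha,k}$ has rank $|\OP_{\alpha,k}|$, so $H^{\bullet}(X'_{\alpha,k};\ZZ)$ has rank $|\OP_{\alpha,k}| = \mathrm{rank}(R_{\alpha,k})$. Since both are free $\ZZ$-modules of finite rank, Lemma~\ref{basic-module-facts}(3) shows $\psi$ is an isomorphism; composing with $\phi^*$ gives a ring isomorphism $H^{\bullet}(Y_{\alpha,k};\ZZ) \cong R_{\alpha,k}$ carrying $x_i$ to $c_1(\ell_i^*)$, which is the desired presentation.

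\emph{Main obstacle.} Everything after the reduction step is a formal consequence of Theorem~\ref{main-theorem}, the projective bundle formula, and Betti-number bookkeeping, so the one place that requires genuine care is verifying that $\phi \colon Y_{\alpha,k} \to X'_{\alpha,k}$ is a Zariski-locally trivial affine bundle, i.e.\ that forgetting the lines while remembering the flags they generate does not change the homotopy type. If this bundle structure proves delicate, an alternative is to bypass $X'_{\alpha,k}$ entirely and build an affine paving of $Y_{\alpha,k}$ directly via a line-configuration analogue of the mixed reduction algorithm of Section~\ref{Mixed}, with cells indexed by a set sequence of type $\alpha$ covering $[k]$ together with extra affine coordinates recording the choices of lines within each flag step; this route is self-contained but appears more laborious.
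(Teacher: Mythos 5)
The paper does not prove this statement --- it is stated as Conjecture~\ref{y-cohomology-conjecture} and left open, with only a sketch of a possible strategy: apply Leray--Hirsch to the fibration $\pi \colon Y_{\alpha,k} \to X_{\alpha,k}$ with fiber homotopy equivalent to $\mathcal{F\ell}(\alpha_1) \times \cdots \times \mathcal{F\ell}(\alpha_r)$, which requires the unverified surjectivity of $\iota^* \colon H^{\bullet}(Y_{\alpha,k};\QQ) \to H^{\bullet}(F;\QQ)$ (known only for $\alpha = (m,1,\dots,1)$ via \cite{RW}) and, even granting that, would only yield the statement as an isomorphism of graded $\QQ$-vector spaces. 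Your argument takes a genuinely different and stronger route, and I do not see a gap in it. The key new observation is that $\pi$ factors as $Y_{\alpha,k} \to X'_{\alpha,k} \to X_{\alpha,k}$ where the second map is the honest flag bundle already used in the proof of Theorem~\ref{main-theorem}, so that the projective bundle formula applies integrally and unconditionally (no Leray--Hirsch hypothesis), while the first map is an iterated torsor under the vector bundles $\mathrm{Hom}(V_j^{(i)}/V_{j-1}^{(i)}, V_{j-1}^{(i)})$ and hence a homotopy equivalence. I checked the step you flagged as delicate: at each stage the space of admissible next lines is the complement of $\mathbb{P}(V_{j-1}^{(i)})$ in $\mathbb{P}(V_j^{(i)})$ with $V_{j-1}^{(i)}$ of corank one, which is Zariski-locally (trivialize the flag) a product with $\mathbb{A}^{j-1}$, and over a paracompact base a torsor under a vector bundle is topologically trivial since the obstruction lives in $H^1$ of a fine sheaf; moreover the final iterated stage recovers $Y_{\alpha,k}$ exactly because the condition $\ell_{(i,j)} \subseteq V_j^{(i)}$, $\ell_{(i,j)} \not\subseteq V_{j-1}^{(i)}$ forces the partial sums to equal the flag. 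The identification $\ell_{(i,j)} \cong V_j^{(i)}/V_{j-1}^{(i)}$ correctly matches $c_1(\ell_i^*)$ with the $i^{th}$ Chern root. The remaining steps all rest on facts the paper establishes: generation of $H^{\bullet}(X_{\alpha,k};\ZZ)$ by the $e_d(\xx_n^{(i)})$ and vanishing of the generators of $I_{\alpha,k}$ (proof of Theorem~\ref{main-theorem}), freeness and rank $|\OP_{\alpha,k}|$ of $R_{\alpha,k}$ (Lemma~\ref{r-structure-lemma}), rank $|\OP_{\alpha,k}|/\alpha!$ of $H^{\bullet}(X_{\alpha,k};\ZZ)$, the factor of $\alpha!$ from the iterated projective bundle formula, and Lemma~\ref{basic-module-facts}(3). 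So your argument upgrades the paper's conjectural graded-$\QQ$-vector-space statement to the full integral ring presentation, at the cost of relying on Theorem~\ref{main-theorem} as a black box; the paper's sketched route, by contrast, would be independent of that theorem but is both incomplete and weaker in its conclusion.
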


Conjecture~\ref{y-cohomology-conjecture} is supported by the following fact about rational cohomology.
There is a natural projection
\begin{equation}
\label{fibration}
\pi: Y_{\alpha,k} \rightarrow X_{\alpha,k}
\end{equation}
which sends the line tuple $(\ell_1, \dots, \ell_n)$ to the subspace tuple $(W_1, \dots, W_r)$ 
where 
\begin{equation*}
W_i = \ell_{\alpha_1 + \cdots + \alpha_{i-1} + 1} + \cdots + \ell_{\alpha_1 + \cdots + \alpha_{i-1} + \alpha_i}
\end{equation*}
is the span of the $i^{th}$ batch of lines.

The map $\pi$ is a fiber bundle with fiber $F$ homotopy equivalent to the product 
\begin{equation}
\mathcal{F \ell}(\alpha_1) \times \cdots \times \mathcal{F \ell}(\alpha_r)
\end{equation}
of flag varieties.
The inclusion $\iota: F \hookrightarrow Y_{\alpha,k}$ endows $H^{\bullet}(F; \QQ)$ 
induces a map $\iota^*: H^{\bullet}(Y_{\alpha,k}; \QQ) \rightarrow H^{\bullet}(F; \QQ)$ on rational 
cohomology.

Let $\alpha \in [k]^r$ with $|\alpha| = n$.  
Theorem~\ref{affine-paving} and Lemma~\ref{set-theoretic-stratification} imply that the cohomology ring 
$H^{\bullet}(X_{\alpha,k}; \ZZ)$ has $\ZZ$-basis given by the classes 
$\{ [\overline{C_{\III} }] \,:\, \text{ $\III$ covers $[k]$ } \}$
of cell closures 
corresponding to covering set sequences $\III = (I_1, \dots, I_r)$ in $[k]$ of type $\alpha$.
In light of Theorem~\ref{main-theorem}, it is natural to pose the following problem.

\begin{problem}
\label{cell-representatives}
Given a covering set sequence $\III = (I_1, \dots, I_r)$ in $[k]$ of type $\alpha$, 
let $C_{\III} \subseteq X_{\alpha,k}$
be the corresponding cell. Find a polynomial in $\ZZ[\xx_n]$ which represents 
$[\overline{C_{\III}}]$ in $H^{\bullet}(X_{\alpha,k};\ZZ) = (R_{\alpha,k})^{\symm_{\alpha}}$.
\end{problem}

When $k = r = n$, the set sequences $\III$
 appearing in Problem~\ref{cell-representatives} may be identified with permutations in $\symm_n$
 and the classical {\em Schubert polynomials} solve Problem~\ref{cell-representatives}.
 When $k \leq r = n$, a solution to Problem~\ref{cell-representatives} is given by the 
 `Fubini word Schubert polynomials' found in \cite{PR}.

Assuming the map $\iota^*$ is {\em surjective},
the Leray-Hirsch Theorem would give  the following isomorphism
of $H^{\bullet}(X_{\alpha,k}; \QQ)$-modules:
\begin{equation}
\label{leray-hirsch-isomorphism}
H^{\bullet}(Y_{\alpha,k}; \QQ) \cong H^{\bullet}(F; \QQ) \otimes_{\QQ} H^{\bullet}(X_{\alpha,k}; \QQ) =
H^{\bullet}(F; \QQ) \otimes (S_{\alpha,k})^{\symm_{\alpha}}.
\end{equation}
The surjectivity of $\iota^*$ would likely follow by constructing an affine paving of the $n$-fold 
product of $k-1$-dimensional complex projective spaces $(\mathbb{P}^{k-1})^n$
whose initial part gives an affine paving of $Y_{\alpha,k} \subseteq (\mathbb{P}^{k-1})^n$.
Such a paving was constructed in the case $\alpha = (m, 1, \dots, 1)$ in \cite{RW}.
By the K\"unneth Theorem, the ring $H^{\bullet}(F; \QQ)$ is isomorphic to a direct product
of classical coinvariant rings, so 
that the isomorphism \eqref{leray-hirsch-isomorphism} would prove 
Conjecture~\ref{y-cohomology-conjecture} at the level of graded $\QQ$-vector spaces.

Assume that the dimension vector $\alpha = (d, \dots, d) \in [k]^r$ is constant.
Corollary~\ref{ungraded-combinatorial-model} describes the cohomology ring 
$H^{\bullet}(X_{\alpha,k}; \QQ) = (S_{\alpha,k})^{\symm_{\alpha}}$ as an ungraded $\QQ[\symm_r]$-module.
This suggests the following problem.

\begin{problem}
\label{graded-frobenius}
Let $\alpha = (d, \dots, d) \in [k]^r$ be a constant dimension vector.
\begin{itemize}
\item
Determine the isomorphism type of $(S_{\alpha,k})^{\symm_{\alpha}}$ as a 
graded $\symm_{r}$-module.
\item
Determine the isomorphism type of $S_{\alpha,k}$ as a graded $\symm_d \wr \symm_r$-module.
\end{itemize}
\end{problem}

In the case $\alpha = (1, \dots, 1)$, the two bullet points of Problem~\ref{graded-frobenius} coincide
and $S_{\alpha,k}$ gives an algebraic and geometric model for the Delta Conjecture 
at $t = 0$ \cite{HRS, PR}.
A solution to Problem~\ref{graded-frobenius} could suggest an extension of the Delta Conjecture itself
to general dimension vectors $\alpha$.

Borel's presentation of the partial flag variety $\mathcal{F \ell}(\alpha)$ in 
Theorem~\ref{borel-theorem} has a generalization to arbitrary Lie type. In particular, if $G$ is a reductive
complex Lie group and if $P$ is a parabolic subgroup of $G$, the homogeneous space $G/P$ 
has rational
cohomology presented in terms of the coordinate ring  of the Cartan subalgebra $\mathfrak{h}$ of $G$.
It would be interesting to extend our results on the more general spaces $X_{\alpha,k}$ to arbitrary Lie type.

\section{Acknowledgements}
\label{Acknowledgements}

The author is grateful to Sara Billey, Brendan Pawlowski, Vic Reiner, and Andy Wilson for many helpful conversations.
The author thanks Fran\c{c}ois Bergeron for asking how to generalize $X_{n,k}$ to higher-dimensional
subspaces.
The author also thanks the anonymous referees for helpful comments which improved the 
exposition of this paper.
The author was partially supported by NSF Grants DMS-1500838 and
DMS-1953781.

\end{document}